\tikzset{cross/.style={cross out, draw=black, minimum size=2*(#1-\pgflinewidth), inner sep=1pt, outer sep=0pt},
%default radius will be 1pt. 
cross/.default={1pt}}
\theoremstyle{plain}
\newtheorem{theorem}{Theorem}[section]
\newtheorem{lemma}[theorem]{Lemma}
\newtheorem{proposition}[theorem]{Proposition}
\newtheorem{corollary}[theorem]{Corollary}
\theoremstyle{definition}
\newtheorem{definition}[theorem]{Definition}
\newtheorem{example}[theorem]{Example}
\newtheorem{remark}[theorem]{Remark}
\numberwithin{equation}{section}
\numberwithin{figure}{section}
\newcommand\Graph{\mathcal{G}}
\newcommand\GraphH{\mathcal{H}}
\newcommand\NewGraph{\widetilde{\mathcal{G}}}
\newcommand\spec{\sigma}
\newcommand\newa{\tilde{a}}
\newcommand\newX{\widetilde{X}}
\newcommand{\cy}{\mathfrak{c}} %Length of the longest cycle of a graph
\newcommand{\dcp}{\mathscr{D}_{\mathcal{G}}} %Doubly connected part of a graph
\newcommand{\dcpgraph}[1]{\mathscr{D}_{#1}} %Doubly connected part of the graph #1
\newcommand{\R}{\mathbb{R}}
\newcommand{\E}{\mathbb{E}}
\newcommand{\mG}{\mathsf{G}}
\DeclareMathOperator{\dist}{dist}
\DeclareMathOperator{\var}{var}
\DeclareMathOperator{\aufspan}{span}
\newcommand{\nd}[3]{\partial_\nu {#1}|_{#2}(#3)} %Normal derivative #1=function #2=edge #3=vertex
\newcommand{\pstick}[3]{\mathcal{P}[#1,#2,#3]} %Pumpkin-on-a-stick! #1=\ell pumpkin length, #2=k pumpkin thickness, #3=d pumpkin dist
\newcommand{\pdb}[3]{\mathcal{D}[#1,#2,#3]} %Pumpkin dumbbell, #1=\ell_1 left pumpkin length, #2=\ell_2 right pumpkin, #3=k thickness
\newcommand{\db}[2]{\mathcal{D}[#1,#2]} %P-dumbbell with k=2, #1=\ell_1 left pumpkin length, #2=\ell_2 right pumpkin
\newcommand{\tp}[1]{\mathcal{L}[#1]} %Tadpole (lasso) with #1=loop length
\title{Surgery principles for the spectral analysis of quantum graphs} 
\subjclass[2010]{34B45 (05C50 35P15 81Q35)}
\keywords{Quantum graphs, Spectral geometry of quantum graphs, Bounds on spectral gaps}
\author[G.~Berkolaiko]{Gregory Berkolaiko}
\author[J.~B.~Kennedy]{James B.~Kennedy}
\author[P.~Kurasov]{Pavel Kurasov}
\author[D.~Mugnolo]{Delio Mugnolo}
\address{Gregory Berkolaiko, Department of Mathematics, Texas A{\&}M University, College Station, TX 77843-3368, USA}
\email{gregory.berkolaiko@math.tamu.edu}
\address{James B.~Kennedy, Grupo de F\'isica Matem\'atica, Faculdade de Ci\^encias, Universidade de Lisboa, Campo Grande, Edif\'{i}cio~C6, \mbox{P-1749-016} Lisboa, Portugal}
\email{jbkennedy@fc.ul.pt}
\address{Pavel Kurasov, Department of Mathematics, Stockholm University, SE-106 91 Stockholm, Sweden}
\email{kurasov@math.su.se}
\address{Delio Mugnolo, Lehrgebiet Analysis, Fakult\"at Mathematik und Informatik, Fern\-Universit\"at in Hagen, D-58084 Hagen, Germany}
\email{delio.mugnolo@fernuni-hagen.de}
\thanks{The authors would like to thank the anonymous referees for their 
careful reading of, and helpful suggestions for, the draft version of this article. 
The work of G.B. was partially supported by the NSF under grant DMS-1410657.
J.B.K.~was supported by the Funda{\c{c}}{\~a}o para a Ci{\^e}ncia e a
Tecnologia, Portugal, via the program ``Investigador FCT'', reference
IF/01461/2015, and project PTDC/MAT-CAL/4334/2014. 
P.K. was partially supported by the Swedish Research Council (Grant D0497301).
D.M. was partially supported by the Deutsche Forschungsgemeinschaft (Grant 397230547).
All four authors were partially supported by the Center for
Interdisciplinary Research (ZiF) in Bielefeld, Germany, within the
framework of the cooperation group on ``Discrete and continuous models
in the theory of networks''.}
\begin{document}

\begin{abstract}
  We present a systematic collection of spectral surgery principles for the
  Laplacian on a compact metric graph with any of the usual 
  vertex conditions (natural, Dirichlet or $\delta$-type), which show how 
  various types of changes of a local or localised nature to a graph impact on 
  the spectrum of the Laplacian.  Many of these principles are entirely
  new; these include ``transplantation'' of volume within a graph
  based on the behaviour of its eigenfunctions, as well as
  ``unfolding'' of local cycles and pendants. In other cases we establish 
  sharp generalisations, extensions and refinements of known
  eigenvalue inequalities resulting from graph modification, such as
  vertex gluing, adjustment of vertex conditions and introducing new
  pendant subgraphs.

  To illustrate our techniques we derive a new eigenvalue estimate
  which uses the size of the doubly connected part of a compact 
  metric graph to estimate the lowest non-trivial eigenvalue of the 
  Laplacian with natural vertex conditions. This quantitative isoperimetric-type
  inequality interpolates between two known estimates --- one assuming
  the entire graph is doubly connected and the other making no
  connectivity assumption (and producing a weaker bound) --- and
  includes them as special cases.
\end{abstract}

\maketitle

\tableofcontents

%%%%%%%%%%%%%%%%%%%%%%%%%%%%%%%%%%%%%%%%%%%%%%%%%%%%%%%%%%%%%%%%%%%%%%%%%%%%%%%%%%%%%%%%%

\section{Introduction}
\label{sec:intro}

The topic of eigenvalue estimates for various kinds of differential
operators -- especially for the Laplacian -- is a well-established
one. The usual goal is to deduce estimates depending only on  
simple geometric properties of the underlying object, most commonly 
a domain or a manifold, without having to try to compute the eigenvalues 
or eigenfunctions explicitly.

In the last decade there has been a pronounced growth of interest in
such eigenvalue estimates in the particular case of quantum graph
Laplacians; we refer in particular to
\cite{AiScSmWa_arx17,Ari16,BanLev_ahp17,BeKeKuMu_arx17,BonDed12,Fri_aif05,16PAMS,KeKuMaMu_ahp16,KMN,KurNab_jst14,Roh_pams17,RohSei_arx18}
for the Laplacian and its linear generalisations, and
\cite{AdaSerTil15,AdaSerTil_cv15,AdaSerTil_cmp17} among others for a
class of nonlinear Schr\"odinger operators on graphs.  Particular
attention has been paid to the lowest non-trivial eigenvalue 
because, for example, it gives the optimal rate of convergence to the 
equilibrium of solutions to the corresponding heat equation. In the case of 
natural vertex conditions (also known as standard and Kirchhoff-continuity 
in the literature), this eigenvalue equals the spectral gap, and in addition 
to its role in the heat equation, 
predicts bifurcation of the ground state from constant in nonlinear 
Schr\"odinger equation on the graph \cite{Ada16,MarPel15}, and has a
non-trivial relation to graph connectivity: for the discrete
Laplacian the spectral gap is also referred to as the {\it algebraic
  connectivity} \cite{Fie_cmj73}.  The corresponding eigenvectors, also 
called {\it Fiedler vectors}, are interesting from the point of view of 
clustering problems \cite{Lux07} and the hot spots conjecture \cite{Ev11}, 
and similar applications are expected in the case of quantum graph 
Laplacians \cite{KeKuLeMu18,KeRo18}.  Higher eigenvalue
estimates can play a role in the study of spectral minimal
partitions \cite{BaBeRaSm12,KeKuLeMu18}, nodal count
statistics \cite{Ber_cmp08,BanBerWey_jmp15}, \cite[Theorem 7.8]{Ber_cm17} 
and even quantum chaos \cite{BeBoKe01}.

It has become increasingly clear that a central role in estimating
eigenvalues is played by what we shall call \emph{surgery operations}:
basic changes to the geometry of a graph, such as lengthening an edge
or gluing together vertices, that have a predictable effect on one or
several eigenvalues.  The current work is dedicated to developing and cataloguing
these tools in their sharpest form, as well as illustrating their potency with
some carefully selected applications (further applications that have
been discovered in the course of preparing this manuscript will be
published elsewhere).  Wherever feasible we treat general eigenvalues
(i.e., not just the lowest ones) and more general vertex conditions,
in particular Dirichlet and Robin-type couplings.  Much care is
dedicated to treating the cases of extremality, i.e. the cases when an
inequality becomes an equality. 
However, we restrict ourselves to the case of \emph{compact} 
graphs, that is, graphs with a finite number of edges, each of finite length; 
this guarantees that our Laplacian operators have discrete spectrum.
Most techniques presented here should be extendable, with the same proofs, to 
the eigenvalues below the essential spectrum of Schr\"odinger-type 
operators on non-compact graphs.

After introducing our notation and recalling the basic definitions and
properties of quantum graphs in Section~\ref{sec:prelim}, we will
collect all the surgical principles in Section~\ref{sec:tools},
classifying them into three types.  Section~\ref{sec:vertex-tools}
treats operations related to the vertices: cutting and gluing them, or
changing the vertex condition.  Notably, the main theorem of this
section, Theorem~\ref{thm:changing_vc}, contains a new, complete
characterisation of equality when cutting and gluing vertices, which
in turn uses a characterisation of equality in the Courant--Fischer
minimax principles which seems to be very little known (see
Lemma~\ref{lem:min-max-equality}).  In
Section~\ref{sec:op_increase_volume} we look at operations that
increase the total volume (length) of the graph.  Here, we study the
effect of inserting a graph at a given vertex, of which previously
studied operations of attaching a pendant graph and lengthening an
edge are special cases.  In Section~\ref{sec:op_transfer_volume} we
consider operations that transfer edges of the graph from one part to
another: in this case, we are primarily interested in the 
lowest non-trivial eigenvalue. Among others, we introduce the notions of 
transplantation and unfolding of edges.  
Theorem~\ref{thm:transferring_vol} summarises the spectral consequences 
of these operations, which for the most part we believe to be entirely new.  
In Section~\ref{sec:limitations} we give a few examples illustrating the 
necessity of our assumptions and indicating possible further extensions and 
generalisations.

Some of the surgery operations we consider have appeared elsewhere,
but in weaker forms.  In \cite{BerKuc_sg12,KMN,KurNab_jst14,Roh_pams17},
eigenvalue estimates were derived for certain basic surgical
operations of quantum graphs, namely gluing vertices and attaching
edges; the recent preprint \cite{RohSei_arx18} deals with
these operations for more general self-adjoint vertex conditions. 
However, even with these operations, to date little attention has been 
paid to characterising the cases of equality.  More sophisticated surgery
operations where the set of edges is changed were investigated in
\cite{BeKeKuMu_arx17,KeKuMaMu_ahp16,BanLev_ahp17}, often relying on 
the symmetrisation technique first applied to quantum graphs by
L. Friedlander \cite{Fri_aif05} (see \cite{16PAMS} for a comparison with
other techniques).  Some estimates of the types not considered
here, but which could be derived from the more fundamental results
collected in Section~\ref{sec:tools}, appeared as Lemma~4.2 and 
Lemma~4.5 of \cite{BanBerWey_jmp15} and as Theorem~1.3 (edge switching
transformation) of \cite{AiScSmWa_arx17}.

The proofs of all our surgery results are the subject of
Section~\ref{sec:proofs}.  The remaining sections are devoted to a
demonstration of what can be achieved using the new (or significantly
improved) surgery principles.  Our main goal in this direction is a
sharpened isoperimetric-type inequality for the first non-trivial
eigenvalue (spectral gap) of the Laplacian with natural vertex 
conditions.  For a general compact metric graph of total length $L$,
this eigenvalue was shown by Nicaise \cite{Nic_bsm87} to be no smaller
than $\pi^2/L^2$, with equality if and only if $\Graph$ is a path
(i.e., interval); see \cite{Sol02,Fri_aif05,KurNab_jst14} for further
proofs. Recently, Band and L\'evy \cite{BanLev_ahp17} obtained a
stronger lower bound under the assumption that the graph is doubly
connected: the non-trivial eigenvalue is no lower than $4\pi^2/L^2$;
see also~\cite{BeKeKuMu_arx17} for a sharper estimate in the case of
higher connectivities.

Here we will obtain results that interpolate between these two
inequalities, while containing them as special, limit cases: we will prove
lower bounds on the spectral gap in terms of the size of the
\emph{doubly connected part} of the graph $\Graph$.  This is the
largest subset of $\Graph$ each of whose connected components is
itself doubly connected (see
Definition~\ref{def:doubly-connected-part} for more details).  Our main
theorems in this context are Theorem~\ref{thm:doubly-connected} and
Theorem~\ref{thm:doubly-connected-component}. The former bounds the
spectral gap of a graph $\Graph$ from below in terms of a
\emph{dumbbell graph} (see Definition~\ref{def:examples-of-graphs})
with the same (or smaller) sized doubly connected component. The
latter gives a complementary but equally sharp bound in terms of the 
length of the longest cycle in $\Graph$, leading to a comparison with a
well-chosen \emph{tadpole graph}.
In fact, these results appear to be considerably stronger than the 
best available analogues for discrete Laplacians, cf.\
Proposition~\ref{prop:discrete-tadpole} and
Corollary~\ref{cor:circumference} for more details.

Theorems~\ref{thm:doubly-connected}
and~\ref{thm:doubly-connected-component} may be viewed as 
\emph{quantitative isoperimetric inequalities}, which
make an appearance in spectral geometry of domains in higher
dimensions \cite{BraDeP17}.  Such inequalities give not just a sharp
bound on an eigenvalue in terms of the total volume (or in our case length 
of the graph), but also a correction term which takes into account some
measure of the difference of a given domain from the optimising one
(the size of the doubly connected component or the length of the longest 
cycle, in our case). The
interested reader may wish to combine our results with the results of
\cite{Roh_pams17}, in which complementary improved estimates are
obtained for tree graphs.

Along the way to our main applications, in Section~\ref{sec:pumpkins}
we will give several smaller, more specialised applications of our
techniques to so-called \emph{pumpkin chain} graphs, which give
demonstrations of how individual surgery techniques can be used and
combined to manipulate special classes of graphs.  As a simple example, 
the ``unfolding of edges'' principle from
Section~\ref{sec:op_transfer_volume} shows that the spectral gap of
any tadpole or dumbbell graph is a monotonically decreasing function
of the length of the loop(s) for fixed total graph length, without the need 
for explicit calculations based on the secular equation for the eigenvalues; 
see Propositions~\ref{prop:pumpkin-on-a-stick}
and~\ref{prop:pumpkin-dumbbell} for more details. The examples we
present in this section have also been chosen because they provide
exactly the auxiliary results needed for the proofs of the
isoperimetric inequalities in Section~\ref{sec:sizeof}.

Another application of the techniques developed here will appear in the forthcoming 
paper \cite{Ken18}; we intend to present further applications elsewhere.

%%%%%%%%%%%%%%%%%%%%%%%%%%%%%%%%%%%%%%%%%%%%%%%%%%%%%%%%%%%%%%%%%%%%%%%%%%%%%%%%%%%%%%
\section{Preliminaries and notation}
\label{sec:prelim}

We shall begin with our notation. Let
$\Graph = (\mathcal{V},\mathcal{E})$ be a graph with vertex set
$\mathcal{V}$ and edge set $\mathcal{E}$.  We turn it into a metric
graph by identifying each edge $e\in \mathcal{E}$ with the interval
$[0,|e|]$, where $|e|>0$ is the length of the edge.  We will denote
vertices by letters such as $v$, $u$ and $w$; we shall write
$e \sim v$ to mean that the vertex $v$ is incident with the edge
$e$. In a slight abuse of notation, we will also write $e \sim vw$ to
mean that $e$ is an edge connecting $v$ and $w$.  We will 
always assume the graph is \emph{compact}, by which we mean that 
there is a finite number $E = |\mathcal{E}|$ of edges, each edge of finite 
length; this terminology is in keeping with \cite{BerKuc_msm13}. 
We denote by $|\Graph|$ the total length of the graph, i.e.\ the sum of the 
lengths of the edges of the graph. 
$\Graph$ is allowed to contain loops as well as multiple edges between 
given pairs of vertices.  Often, but not always, we will assume the graph is 
connected; whenever we do so we will state this assumption explicitly.

We shall be interested in the spectrum of the Laplacian $-\Delta$ on
$\Graph$ equipped at each vertex with one of the following vertex
conditions: more precisely, the operator is $-\frac{d^2}{dx^2}$ on
each edge applied to functions which are in the Sobolev space $H^2(e)$
on each edge $e\in \mathcal E$, and which satisfy
\begin{itemize}
\item \emph{natural}\footnote{Also known as standard, Neumann, continuity/Kirchhoff, or
    Neumann--Kirchhoff; observe that on a degree-one vertex natural
    conditions agree with common Neumann ones.} conditions on a subset
  $\mathcal{V}_N \subset \mathcal{V}$: at $v \in \mathcal{V}_N$, we
  demand continuity of the functions and that the sum of the normal
  derivatives at each vertex is zero (the Kirchhoff or ``current
  conservation'' condition):
  \begin{displaymath}
    \sum_{e \sim v} \nd{f}{e}{v} =0
  \end{displaymath}
  for each $v \in \mathcal{V}_N$, where $\nd{f}{e}{v}$ is the normal
  derivative of $f$ on $e$ at $v$, with $\nu = \nu_e(v)$ pointing
  \emph{outward} (away from the edge $e$, towards the vertex);
\item \emph{Dirichlet} conditions on a subset
  $\mathcal{V}_D \subset \mathcal{V}$: at $v \in \mathcal{V}_D$, any
  functions in the domain of $\Delta$ should take on the value zero.
\item \emph{$\delta$ (or Kirchhoff--Robin)} conditions on a subset
  $\mathcal{V}_R \subset \mathcal{V}$: for each $v\in \mathcal{V}_R$
  there is a $\gamma = \gamma(v) \in \R$, $\gamma\neq 0$ such that the
  functions $f \in D(\Delta)$ are continuous at $v$ and the
  derivatives at $v$ satisfy
  \begin{equation}
    \label{eq:robin_def}
    \sum_{e \sim v} \nd{f}{e}{v} + \gamma f(v) = 0,
  \end{equation}
  where, again, $\nu$ is the outer unit normal to the edge. 
  We sometimes refer to $\gamma$ as the \emph{strength} of the
  $\delta$-condition, or as the \textit{$\delta$-potential} at $v$. We
  will write
  \begin{displaymath}
    \mathcal{V}_R^- := \left\{v \in \mathcal{V}_R: \gamma(v) < 0\right\}, \qquad
    \mathcal{V}_R^+ := \left\{v \in \mathcal{V}_R: \gamma(v) > 0\right\},
  \end{displaymath}
  so that $\mathcal{V}_R = \mathcal{V}_R^- \cup \mathcal{V}_R^+$.
\end{itemize}
We thus assume that
$\mathcal{V} = \mathcal{V}_N \cup \mathcal{V}_D \cup \mathcal{V}_R$, 
where any of the three sets on the right-hand side may be empty.
We see immediately that the $\delta$-condition with $\gamma(v)=0$
corresponds to the natural condition.  Furthermore, Dirichlet
conditions correspond formally to $\delta$-conditions of strength
$\gamma=\infty$.  This correspondence may be made rigorous
\cite{BerKuc_sg12}, although we will not need it here.  We refer to
\cite[Chapter~1]{BerKuc_msm13} and \cite[Chapters~2 and~3]{Mug14} for
more details regarding the definitions and elementary properties of
function spaces on graphs and the Laplace-type operators defined on
them and also to \cite{Ber_cm17} for an elementary introduction to
spectral properties of quantum graphs.

\begin{remark}
  \label{rem:dummy}
  Any point in the interior of an edge may be declared to be a vertex
  of degree 2 with natural conditions without affecting the spectral 
  properties of the operator (cf.\ the discussion just after Assumption~3.1
  in \cite{BeKeKuMu_arx17}).  We will refer to this as introducing a
  ``dummy'' vertex.  Conversely, any vertex $v$ of degree 2 with
  natural conditions may be suppressed.  Likewise, the operator is
  not modified if a subset of the elements of $\mathcal V_D$
  are identified to form one single Dirichlet vertex.
\end{remark}

The corresponding quadratic form is given by the Dirichlet integral
\begin{equation}
\label{eq:dirichlet-form}
	a(f)=\int_{\Graph} |f'|^2\,\textrm{d}x + \sum_{v \in \mathcal{V}_R}\gamma(v) |f(v)|^2,
\end{equation}
with the domain formed by all functions from the Sobolev space
$ H^1(e_j) $ on every edge, which are in addition continuous at the
vertices, and zero at all vertices in $\mathcal{V}_D$. If
$\mathcal{V}_D = \emptyset$, then this form domain shall be denoted by
$H^1(\Graph)$, as is customary; if $\mathcal{V}_D \neq \emptyset$ we shall 
denote it by $H^1_0 (\Graph;\mathcal{V}_D)$, or just $H^1_0 (\Graph)$ if there 
is no danger of confusion about which set $\mathcal{V}_D$ is to be
understood. We will write $C(\Graph)$ for the space of continuous functions 
on $\Graph$, i.e., continuous on every edge, and at every vertex, so that 
$H^1 (\Graph), H^1_0 (\Graph;\mathcal{V}_D) \subset C(\Graph)$ for 
any $\mathcal{V}_D$.

As is well known, under this set of assumptions the Laplacian described 
above is self-adjoint and semi-bounded, and has trace class resolvent; in 
particular, its spectrum consists of a sequence of real eigenvalues of finite 
multiplicity, which we denote by
\begin{equation}
  \label{eq:eigenvalues}
  \lambda_1 \leq \lambda_2 \leq \lambda_3 \leq \ldots,
\end{equation}
where each is repeated according to its multiplicity. The corresponding
eigenfunctions may be chosen to form an orthonormal basis of
$L^2(\Graph)$, and may additionally without loss of generality all be
chosen real, as we shall do without further comment throughout the
paper. We will tend to use letters such as $\psi$ to denote
eigenfunctions, and will refer to an eigenfunction corresponding to
eigenvalue $\lambda$ as a $\lambda$-eigenfunction. By standard
Kre\u{\i}n--Rutman theory, if $\Graph$ is connected, the first
eigenvalue $\lambda_1$ is always simple and the corresponding
eigenfunction, unique up to scalar multiples, can be chosen strictly
positive a.e.; in fact it can be shown to be strictly positive
everywhere outside $\mathcal{V}_D$.

The eigenvalues, and their eigenfunctions, depend on both the metric
and topological structure of the graph and the $\delta$-coupling
parameters; for brevity, in keeping with the custom of 
considering a quantum graph to be a triple consisting of a metric graph, 
a differential operator and vertex conditions, we shall write $\lambda_k 
= \lambda_k(\Graph)$ to reflect
this dependence and correspondingly $\spec(\Graph) = \{\lambda_k
(\Graph):k\geq 1\}$ for the spectrum. To distinguish the important
case of only natural conditions, $\mathcal{V} = \mathcal{V}_N$, where
$\lambda_1 = 0$, we shall often use the superscript ``$N$'' if we wish
to emphasise the presence of only natural conditions at the vertices:
in this case, we will write (assuming $\Graph$ is connected),
\begin{equation}
\label{eq:natural-eigenvalues}
	0 = \lambda_1^N < \lambda_2^N \leq \lambda_3^N \leq \ldots
\end{equation}
Note that the more general notation \eqref{eq:eigenvalues} also covers
this case. We will likewise use the notation
\begin{equation}
\label{eq:dirichlet-eigenvalues}
	0 < \lambda_1^D < \lambda_2^D \leq \lambda_3^D \leq \ldots
\end{equation}
in place of the $\lambda_k$ if we want to emphasise that there is at
least one Dirichlet vertex. In this case all non-Dirichlet vertices
are assumed to be equipped with natural conditions. 
We mention explicitly that if one or more vertices are equipped 
with a negative coupling condition, i.e., if $\mathcal{V}_R^- \neq \emptyset$, 
then there may be negative eigenvalues; in this case, it is also possible that 
$\lambda_1 (\Graph) = 0$ but the corresponding eigenfunction is not identically 
constant. See also \cite{ExJe}.

At any rate, these eigenvalues are generally not explicitly computable
as the relevant secular equation is transcendental even on graphs as
simple as stars, cf.~\cite[\S~4, Thm.]{Bel_laa85} 
or~\cite[Sec 5]{Ber_cm17}. Instead, in the tradition of
spectral geometry, we can attempt to understand how the eigenvalues
change depending on the underlying graph.  For example, one may be
interested to know if a certain graph minimises or maximises a given
eigenvalue among all graphs with certain fixed geometric quantities
(length, diameter etc.), or if, for a given graph with reflection
symmetry, the first non-trivial eigenfunction is symmetric or
anti-symmetric.  To answer such questions one needs to be able to make
comparisons.  One of the aims of this work is to catalogue the types
of alterations to the geometry of a graph that affect the eigenvalue(s)
in a predictable way; we shall generally refer to this as
``surgery''. 

We will often be concerned with the first non-trivial eigenvalue in
particular.  The starting point is always the variational
characterisation of $\lambda_1 (\Graph)$ and $\lambda_2 (\Graph)$,
namely
\begin{equation}
\label{eq:varchar-principal}
	\lambda_1 (\Graph) = \inf \left\{\frac{a(f)}{\int_\Graph |f|^2\,\textrm{d}x}: 0 \neq f \in H^1 (\Graph) \text{ or } 
	H^1_0 (\Graph; \mathcal{V}_D) \right\},
\end{equation}
($H^1$ or $H^1_0$ as appropriate, and where $a$ is given by \eqref{eq:dirichlet-form}). 
In the case of a connected graph and pure natural
conditions, $\mathcal{V}=\mathcal{V}_N$, where $\lambda_1^N=0$ and the
associated eigenfunction is simply the constant function, it is more
natural to consider $\lambda_2^N$, which in this case, since $a(f)$
reduces to the integral of the derivatives, is given by
\begin{equation}
\label{eq:varchar}
	\lambda_2^N (\Graph) = \inf \left\{ \frac{\int_{\Graph} |f'|^2\,\textrm{d}x}{\int_{\Graph} |f|^2\,\textrm{d}x} :
	0 \neq f \in H^1 (\Graph) \text{ and } \int_{\Graph} f\,\textrm{d}x = 0 \right\};
\end{equation}
the condition $\int_{\Graph} f\,\textrm{d}x = 0$ represents the orthogonality 
in $L^2(\Graph)$ of $f$ to the constant eigenfunctions of $\lambda_1^N(\Graph)$. 
We therefore introduce the following notation.
\begin{definition}
  \label{def:mu}
  Given a \emph{connected} graph $\Graph$ with vertex conditions of the types listed
  above, we denote by $\mu (\Graph)$ its first eigenvalue
  $\lambda_1 (\Graph)$ if
  $\mathcal{V}_D \cup \mathcal{V}_R \neq \emptyset$, or its second
  eigenvalue $\lambda_2^N (\Graph)$ if
  $\mathcal{V}_D \cup \mathcal{V}_R = \emptyset$ (and thus
  $\lambda_1^N (\Graph) = 0$).
\end{definition}

The higher eigenvalues may be characterised by corresponding minimax
and maximin principles of Courant--Fischer type, cf.~\eqref{eq:minimax_full} 
and~\eqref{eq:maximin_full}. In all cases, for a
given function $f \in H^1(\Graph)$, the quotient appearing in
\eqref{eq:varchar-principal} or \eqref{eq:varchar} is called the
\emph{Rayleigh quotient} of $f$, and equality is achieved if and only
$f$ is an eigenfunction associated with the corresponding eigenvalue.

%%%%%%%%%%%%
\subsection{Examples of graphs}

Here we introduce terminology for several classes of graphs that come
up often in applications, in particular, as sharp cases of eigenvalue
estimates. In all cases, we assume that any vertices of 
degree two are suppressed, cf.~Remark~\ref{rem:dummy}.

\begin{definition}
\label{def:examples-of-graphs}
  \begin{enumerate}
  \item The \emph{path graph} is a graph consisting of two vertices and one edge.
  \item A \emph{loop} is a graph consisting of one edge whose
    endpoints are the same vertex. 
  \item The \emph{star graph} is a graph consisting of $E$ edges all
    having exactly one vertex in common. We refer to this graph 
    as an \emph{$E$-star} to emphasise the number of edges.
  \item The \emph{tadpole graph} (also called ``lasso graph'') is a graph
    consisting of a loop attached to a single edge.
  \item The \emph{flower graph} consists of $E$ loops attached to a
    single vertex.  A special case is the \emph{figure-8 graph}
    which has two loops.
  \item The \emph{dumbbell graph} has three edges and two vertices; 
    it consists of an edge joining two loops.
  \item The \emph{pumpkin graph} (also called ``mandarin graph'') is 
    a graph consisting of two vertices and $E$ parallel edges of possibly 
    different lengths running between them. We will also write 
    \emph{$E$-pumpkin} if we wish to emphasise the number of edges.
  \item In examples (3), (5) and (7), the prefix \emph{equilateral} 
    is applied if all edges have the same length.
  \item A \emph{pumpkin chain} is built up out of pumpkins glued
    sequentially at the vertices.  More precisely, a
    \emph{$[m_1,\ldots,m_n]$-pumpkin chain} is a connected graph
    consisting of $n+1$ vertices $v_1,\ldots, v_{n+1}$ and, for each  $k=1,\ldots,n$, 
    $m_k$ parallel edges running between the vertices $v_k$ and $v_{k+1}$. 
    We will denote the subpumpkin consisting of the vertices $v_k$ and $v_{k+1}$ and 
    the $m_k$ edges joining them by $\mathcal{P}_k$ and refer to it as the 
    $k$th constituent pumpkin of the chain. 
    Both vertices $v_1$ and $v_{n+1}$, which we sometimes denote by $v_-$ and
    $v_+$, and the pumpkins $\mathcal{P}_1$ and $\mathcal{P}_n$ attached to them, 
    shall be called \emph{terminal}.  We shall call a constituent pumpkin of a pumpkin
    chain \emph{trivial} if it only has one edge, and
    \emph{non-trivial} otherwise. A pumpkin chain is \emph{locally equilateral} if 
    each constituent pumpkin is equilateral (i.e., all edges have 
    the same length), although the lengths of edges in different pumpkins 
    may be different.
  \end{enumerate}
\end{definition}

We remark that the tadpole graph can be viewed as the $[1,2]$- or $[2,1]$-pumpkin
chain, the figure-8 graph is the $[2,2]$-pumpkin chain, and the dumbbell 
is the $[2,1,2]$-pumpkin chain.

%%%%%%%%%%%%%%%%%%%%%%%%%%%%%%%%%%%%%%%%%%%%%%%%%%%%%%%%%%%%%%%%%%%%%
\section{A surgeon's toolkit}
\label{sec:tools}

The aim of this section is to catalogue the effects of elementary
surgical transformations on the spectrum of the graph.  In each case we
will first describe how the connectivity and the metric features of
$\Graph$ should be changed in order to produce a new graph
$\NewGraph$, and then how functions on $\Graph$ can be lifted to
functions on $\NewGraph$ by canonically assigning conditions in the
vertices of $\NewGraph$. Here and throughout we adopt the usual
conventions for arithmetic involving $\infty$, e.g.,
$\gamma + \infty = \infty$ if $\gamma \in \R$;
$\infty + \infty = \infty$ and so on.

We reiterate that all graphs throughout this section are taken to
satisfy the assumptions described at the beginning of 
Section~\ref{sec:prelim}; all 
results of this section will be proved in Section~\ref{sec:proofs}.

%%%%%%%%%%%%
\subsection{Operations changing vertex conditions}
\label{sec:vertex-tools}

We have already remarked that cutting through/gluing together
Dirichlet vertices is a trivial operation (Remark~\ref{rem:dummy});
let us now consider what happens for more general vertex conditions.
Recall that all three kinds of vertex conditions we are considering
can be regarded as $\delta$-conditions with parameter
$\gamma\in (-\infty,\infty]$.

\begin{definition}[Gluing vertices]\label{def:gluing}
  Let $\NewGraph$ be obtained from $\Graph$ by identifying the
  vertices $v_1,v_2,\ldots,v_m$ to obtain a new vertex $ v_0.$ If the
  $\delta$-conditions with the strengths
  $\gamma(v_j) \in (-\infty,\infty]$ were imposed at $v_j$,
  $j=1,\ldots,m$, then the new vertex $v_0$ is to be equipped with the
  $\delta$-condition of strength
  \begin{equation}
    \label{eq:adding_delta_strengths}
    \gamma(v_0) = \gamma(v_1) + \gamma(v_2) + \ldots + \gamma(v_m).  
  \end{equation}
  We will refer to the corresponding surgery transformation as {\bf
    gluing vertices}.
\end{definition}

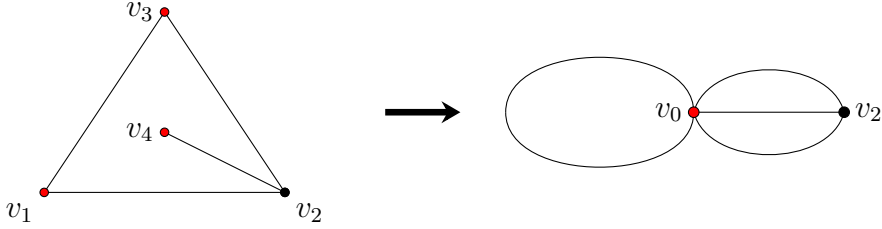
\begin{figure}[H]
\label{fig:joining-vertices-1}
\begin{minipage}[l]{5cm}
\begin{tikzpicture}[scale=0.8]
\coordinate (a) at (0,0);
\coordinate (b) at (4,0);
\coordinate (c) at (2,3);
\coordinate (d) at (2,1);
\draw (a) -- (b);
\draw (c) -- (b);
\draw (a) -- (c);
\draw (b) -- (d);
\draw[fill=red] (0,0) circle (2pt);
\draw[fill] (4,0) circle (2pt);
\draw[fill=red] (2,3) circle (2pt);
\draw[fill] (4,0) circle (2pt);
\draw[fill=red] (2,1) circle (2pt);
\node at (a) [anchor=north east] {$v_1$};
\node at (b) [anchor=north west] {$v_2$};
\node at (c) [anchor=east] {$v_3$};
\node at (d) [anchor=east] {$v_4$};
\end{tikzpicture}
\end{minipage}
\begin{minipage}{1.5cm}
\begin{tikzpicture}%[scale=0.8]
\draw[-{Stealth[scale=0.5,angle'=60]},line width=2.5pt] (4,0) -- (5,0);
\end{tikzpicture}
\end{minipage}
\begin{minipage}{6cm}
\begin{tikzpicture}%[scale=0.8]
\coordinate (c) at (1,0);
\coordinate (d) at (3.5,0);
\coordinate (e) at (5.5,0);
\coordinate (f) at (5.4,1);
\coordinate (g) at (5.4,-1);
\draw[bend left=90]  (c) edge (d);
\draw[bend right=90]  (c) edge (d);
\draw (d) -- (e);
\draw[bend left=75] (d) edge (e);
\draw[bend right=75] (d) edge (e);
\draw[fill=red] (3.5,0) circle (2pt);
\draw[fill] (5.5,0) circle (2pt);
\node at (d) [anchor=east] {$v_0$};
\node at (e) [anchor=west] {$v_2$};
\end{tikzpicture}
\end{minipage}
\caption{The graph $\NewGraph$ (right) is obtained from $\Graph$ (left) 
by gluing the vertices $v_1,v_3,v_4$. Conversely, the graph on the left is 
one of the possible graphs obtainable from the graph on the 
right upon cutting through $v_0$, in this case producing the vertices 
$v_1,v_3,v_4$.}
\end{figure}

\begin{definition}[Cutting through vertices]
  \label{def:cutting}
  The converse operation to gluing the vertices, i.e.\ splitting a
  vertex $v_0$ into $m$ vertices $v_1,v_2,\ldots,v_m$ (called
  \emph{descendants} of $v_0$) with $\delta$-type conditions
  satisfying \eqref{eq:adding_delta_strengths} is called
  \textbf{cutting through the vertex $ v_0$}.

If in addition we are given a certain function $\psi$ satisfying
$\delta$-conditions at $v_0$ we can choose $\gamma(v_1),\ldots,\gamma(v_m)$ 
so that the same function $\psi$ satisfies the conditions
at the new vertices.  Namely, we let
\begin{equation}
  \label{eq:gamma_choice_along_psi}
  \gamma(v_i) = - \frac{1}{ \psi(v_0)} \sum_{e \sim v_i} 
  \nd{\psi}{e}{v_0},
  \qquad
  i=1,\ldots,m
\end{equation}
where the summation is over the edges that are attached to the
relevant descendant of $v_0$. In particular, when $\psi(v_0)=0$, we
impose Dirichlet conditions at all vertices $v_1,\ldots,v_m$.  The corresponding
surgery transformation will be called {\bf cutting through the vertex
  $ v_0$ along the function $ \psi$}.
\end {definition}

In general, when cutting through a vertex $v_0$ the
edges incident with it may be assigned to the new vertices $v_1,\ldots,v_m$ 
in several possible ways. In other words, even if $m$ is fixed, the graph 
$\NewGraph$ created from $\Graph$ by cutting through a 
vertex is in general not unique.

\begin{remark}
\label{rem:canonical-identification}
Suppose $\NewGraph$ is created from $\Graph$ by gluing $m$ vertices 
$v_1,\ldots,v_m$ to form $v_0$. Then there is a natural isomorphism 
$\Phi: L^2(\Graph) \to L^2(\NewGraph)$, that is, we can make the 
identification
\begin{displaymath}
	L^2(\Graph) \simeq \bigoplus_{e\in \mathcal{E}}L^2(0,\ell_e) 
	\simeq L^2 (\NewGraph),
\end{displaymath}
where $\mathcal{E}$ is the common set of edges of the two graphs. Moreover, if 
$f \in C(\Graph)$ (in particular if $f \in H^1(\Graph)$) satisfies $f(v_1)=\ldots= 
f(v_m)$, then also $\Phi(f) \in C(\NewGraph)$ (corresp.~in $H^1(\NewGraph)$). 
In this case, we identify $\Phi(f)$ with $f$ and speak of a ``canonical 
identification'' of the two; in this way, $C(\NewGraph)$ and $H^1 (\NewGraph)$ 
may be regarded as subspaces of $C(\Graph)$ and $H^1(\Graph)$ of codimension 
$m-1$, respectively. From now on, we will \emph{always} make this identification, 
that is, whenever we glue together vertices, we will suppress the notation $\Phi$ 
and identify $C(\NewGraph)$ and $H^1(\NewGraph)$ (and its subspaces) with 
subspaces of $C(\Graph)$ and $H^1(\Graph)$ (and its subspaces), respectively.
\end{remark}

By regarding cutting through a vertex as removing continuity conditions 
from it, if one wishes one may also view this operation as changing the 
conditions at a single, ``generalised'' vertex.  Another example of 
changing conditions is the operation of varying the $\delta$-potential at 
a vertex $v$.  Both types of operations are finite rank perturbations of the 
quantum graph operator and result in the interlacing of the eigenvalues of 
the two graphs.

\begin{theorem}[Changing vertex conditions]
  \label{thm:changing_vc}
  If the graph $\NewGraph$ is obtained from $\Graph$ by either
  \begin{enumerate}
  \item \label{item:gluing_vertices} gluing two vertices,
  \\ \indent
  or 
  \item \label{item:changing_delta} increasing the strength of the 
    $\delta$-condition at a
    single vertex from $\gamma$ to $\gamma' \in (\gamma,\infty]$,
  \end{enumerate}
  then their eigenvalues satisfy the interlacing inequalities
  \begin{equation}
    \label{eq:interlacing}
    \lambda_{k}(\Graph) \leq \lambda_{k}(\NewGraph) 
    \leq \lambda_{k+1}(\Graph) \leq \lambda_{k+1}(\NewGraph),
    \qquad k \geq 1.
  \end{equation}
  If a given value $\Lambda$ has multiplicities $m$ and $\widetilde{m}$
  in the spectra of $\Graph$ and $\NewGraph$, respectively, then 
  $|m-\widetilde{m}| \leq 1$ and, with the identification in 
  Remark~\ref{rem:canonical-identification}, the intersection of the 
  respective $\Lambda$-eigenspaces has dimension $\min(m,\widetilde{m})$.
\end{theorem}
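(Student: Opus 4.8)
The plan is to treat both operations in a single framework. Using the identification of Remark~\ref{rem:canonical-identification} where gluing is concerned (trivially for a change of strength, where the form domains literally coincide), the form domain of $\NewGraph$ becomes a subspace of that of $\Graph$, and in each case there is a nonzero bounded linear functional $\ell$ on the form domain of $\Graph$ --- essentially $f\mapsto f(v_1)-f(v_2)$ when gluing $v_1,v_2$, and $f\mapsto f(v)$ when adjusting the strength at $v$, with the obvious modifications at Dirichlet vertices --- such that exactly one of the following holds: \textbf{(A)} the form domain of $\NewGraph$ is $\ker\ell$, of codimension at most one, and the form $a_{\NewGraph}$ is the restriction of $a=:a_{\Graph}$ to it; or \textbf{(B)} the form domains coincide and $a_{\NewGraph}(f)=a(f)+\delta\,\ell(f)^2$ for some $\delta>0$. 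Gluing falls under (A) (the $\delta$-strengths add, so the two forms agree on $\{f(v_1)=f(v_2)\}$), increasing the strength from $\gamma$ to a finite $\gamma'$ falls under (B) with $\delta=\gamma'-\gamma$, and the case $\gamma'=\infty$ falls under (A); when $\ell\equiv0$ the two quantum graphs coincide and there is nothing to prove. The only structural facts used below are that the two form domains are nested and that the two forms agree on $\ker\ell$.

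Granting this, the interlacing inequalities~\eqref{eq:interlacing} follow from the Courant--Fischer principle: $\lambda_k(\Graph)\le\lambda_k(\NewGraph)$ because the form domain of $\NewGraph$ sits inside that of $\Graph$ and $a_{\NewGraph}\ge a$ there; and if $W_0$ is a $(k+1)$-dimensional subspace of the form domain of $\Graph$ on which the Rayleigh quotient of $a$ does not exceed $\lambda_{k+1}(\Graph)$ (for instance the span of the first $k+1$ eigenfunctions of $\Graph$), then $W_0\cap\ker\ell$ has dimension at least $k$ and the Rayleigh quotients of $a$ and of $a_{\NewGraph}$ coincide on it, so $\lambda_k(\NewGraph)\le\lambda_{k+1}(\Graph)$; the last inequality of~\eqref{eq:interlacing} is the first with $k$ replaced by $k+1$.

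For the statement on multiplicities, write $E_\Lambda(\cdot)$ for the $\Lambda$-eigenspace and set
\[
  M:=\bigl\{\, f\ \text{in the form domain of}\ \Graph\ :\ a(f,\phi)=\Lambda\langle f,\phi\rangle_{L^2(\Graph)}\ \text{for all}\ \phi\in\ker\ell \,\bigr\}.
\]
For $f\in M$ the bounded functional $\phi\mapsto a(f,\phi)-\Lambda\langle f,\phi\rangle$ vanishes on the hyperplane $\ker\ell$, hence equals $c(f)\,\ell$ for a unique scalar $c(f)$, and $c$ is linear on $M$. Testing the weak eigenvalue equation against the whole form domain gives $E_\Lambda(\Graph)=M\cap\ker c$; unravelling the definitions through the above identifications gives $E_\Lambda(\NewGraph)=M\cap\ker\ell$ in case (A) and $E_\Lambda(\NewGraph)=M\cap\ker(c+\delta\ell)$ in case (B), so that in both cases $E_\Lambda(\Graph)\cap E_\Lambda(\NewGraph)=M\cap\ker c\cap\ker\ell$. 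In particular $\dim M\le m+1$ (it contains $E_\Lambda(\Graph)$ with codimension at most one), and both $m$ and $\widetilde m$ lie in $\{\dim M-1,\dim M\}$, whence $|m-\widetilde m|\le1$.

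The key point is that $c$ and $\ell$ are proportional on $M$: for $f,g\in M$, evaluating the relation defining $c(f)$ at $\phi=g$ and the one defining $c(g)$ at $\phi=f$, and using the symmetry of $a$ and of $\langle\cdot,\cdot\rangle$, gives
\[
  c(f)\,\ell(g)\;=\;a(f,g)-\Lambda\langle f,g\rangle\;=\;c(g)\,\ell(f),
\]
so either $\ell\equiv0$ on $M$ or $c=\kappa\ell$ on $M$ for some $\kappa\in\R$. A short case analysis on the value of $\kappa$ (and, in case (B), on whether $\kappa$ or $\kappa+\delta$ vanishes) then shows that $E_\Lambda(\Graph)$ and $E_\Lambda(\NewGraph)$ each coincide with $M$ or with $M\cap\ker\ell$; their intersection is the smaller of the two, so $\dim\bigl(E_\Lambda(\Graph)\cap E_\Lambda(\NewGraph)\bigr)=\min(m,\widetilde m)$, as claimed. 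The step I expect to require the most care is identifying $E_\Lambda(\NewGraph)$ with the correct subspace of $M$ under the canonical identification --- keeping straight which edge-ends are merged, and how the $\gamma'=\infty$ Dirichlet limit fits in --- rather than the symmetry identity itself, which is what reduces the final count to pure linear algebra.
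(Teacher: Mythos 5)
Your proposal is correct, and for the multiplicity/eigenspace statement it takes a genuinely different route from the paper. The paper also abstracts both operations into ``positive rank-1 perturbations'' of the form (its types (R) and (V) correspond to your cases (A) and (B), your (B) being the explicit special form $a+\delta\,\ell(\cdot)^2$ that suffices for the $\delta$-strength operation), and its interlacing argument is essentially identical to yours; but for the equality part the paper proves and invokes a sharp form of the Courant--Fischer principle (Lemma~\ref{lem:min-max-equality}: any minimising subspace contains a $\lambda_n$-eigenvector, and when $\lambda_n<\lambda_{n+1}$ the intersection of all minimising subspaces is exactly the eigenspace), and then runs a four-case analysis of how the equal eigenvalues of $\Graph$ and $\NewGraph$ can line up, extracting the eigenspace inclusions from minimising subspaces in each case. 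You instead work directly with the weak eigenvalue equation: on the auxiliary space $M$ of functions solving it against $\ker\ell$ only, you express the defect functional as $c(f)\,\ell$, identify $E_\Lambda(\Graph)=M\cap\ker c$ and $E_\Lambda(\NewGraph)=M\cap\ker\ell$ (resp.\ $M\cap\ker(c+\delta\ell)$), and use the symmetry identity $c(f)\ell(g)=c(g)\ell(f)$ to make $c$ and $\ell$ proportional on $M$, reducing everything to linear algebra of two functionals; a pleasant by-product is that the two $\Lambda$-eigenspaces are always nested, which immediately gives the $\min(m,\widetilde m)$ count. What each approach buys: yours is more self-contained (no minimax equality lemma needed) and describes the eigenspaces explicitly inside $M$, which also makes the $|m-\widetilde m|\le 1$ bound transparent; the paper's route isolates Lemma~\ref{lem:min-max-equality} as a reusable tool of independent interest and stays entirely within the variational framework of Theorem~\ref{thm:rank1}. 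One small imprecision in your wrap-up: in the subcase where $\ell\equiv 0$ on $M$ but $c\not\equiv 0$ on $M$ (possible in case (A)), $E_\Lambda(\Graph)=M\cap\ker c$ is a proper hyperplane of $M=M\cap\ker\ell$, so it is not one of the two spaces you name; but it is still contained in $E_\Lambda(\NewGraph)=M$, so nestedness and the conclusion are unaffected.
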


The inequality of type \eqref{eq:interlacing} is both well known (cf.,
e.g., \cite[Theorems~3.1.8 and~3.1.10]{BerKuc_msm13}, \cite{KMN},
\cite{RohSei_arx18}) and easy to obtain from the variational
principles.  However, the conclusive treatment of the cases of
equality is, to the best of our knowledge, new.  It is also of
tremendous value for characterising the extremal cases of the
inequalities contained in the subsequent sections of the present
paper.  For example, the following simple observation will be used at
least twice.

\begin{remark}
  \label{rem:catch_up}
  If, in the setting of Theorem~\ref{thm:changing_vc}, the eigenvalue
  arrangement is
  \begin{equation}
    \label{eq:catch_up}
    \lambda_{k}(\Graph) < \lambda_{k}(\NewGraph) 
    = \lambda_{k+1}(\Graph) =: \Lambda,
  \end{equation}
  then every $\lambda_{k+1}(\Graph)$-eigenfunction of $\Graph$ is also
  an eigenfunction of $\NewGraph$.  Indeed, it is easy to see that
  \eqref{eq:interlacing} will imply that $\min(m, \widetilde{m})=m$
  for the eigenvalue $\Lambda$ and the eigenspace inclusion follows.
\end{remark}

We will also often use the following special case of equality.

\begin{corollary}[Gluing level points]
  \label{cor:joining_points}
  Suppose $v_1,\ldots,v_m \in \mathcal{V}(\Graph)$ 
  and for some $k\geq 1$ there exist eigenfunctions $\psi_1,\ldots,\psi_k$ 
  corresponding to $\lambda_1 (\Graph), \ldots , \lambda_k (\Graph)$, 
  respectively, such that
  \begin{displaymath}
	\psi_1(v_1)=\ldots =\psi_1(v_m),
        \ \ldots,\ \psi_k(v_1)=\ldots=\psi_k(v_m).
  \end{displaymath}
  Let $\NewGraph$ be the graph formed from $\Graph$ by 
  gluing $v_1,\ldots,v_m$. Then
  \begin{displaymath}
	\lambda_1(\NewGraph)=\lambda_1(\Graph),
        \ \ldots,\ 
	\lambda_k(\NewGraph)=\lambda_k(\Graph).
  \end{displaymath}
  Moreover, $\psi_1,\ldots,\psi_k$ are 
  eigenfunctions on $\NewGraph$ associated with $\lambda_1(\NewGraph), 
  \ldots, \lambda_k (\NewGraph)$, respectively.
\end{corollary}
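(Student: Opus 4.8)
The plan is to derive Corollary~\ref{cor:joining_points} by an induction on $m$, reducing the gluing of $m$ vertices to $m-1$ successive gluings of two vertices, so that at each step we may apply the equality statement in Theorem~\ref{thm:changing_vc}\eqref{item:gluing_vertices}. Recall that by Remark~\ref{rem:canonical-identification}, after each single gluing the space $H^1(\NewGraph)$ (and hence the eigenspaces of $\NewGraph$) is identified with a codimension-one subspace of $H^1(\Graph)$, so it makes sense to speak of a function on $\Graph$ ``being'' an eigenfunction on $\NewGraph$, and the hypotheses $\psi_j(v_1)=\ldots=\psi_j(v_m)$ are exactly what guarantees the $\psi_j$ survive the identification.

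First I would treat the base case $m=2$. Here $\NewGraph$ is obtained from $\Graph$ by gluing two vertices $v_1,v_2$, and Theorem~\ref{thm:changing_vc} gives the interlacing \eqref{eq:interlacing}, in particular $\lambda_j(\Graph)\le\lambda_j(\NewGraph)$ for every $j$. To get equality for $j=1,\ldots,k$, I would use the variational (minimax) characterisation: since $\psi_1,\ldots,\psi_k$ all satisfy $\psi_j(v_1)=\psi_j(v_2)$, they lie in $H^1(\NewGraph)$ under the identification, and they span a $k$-dimensional subspace on which the Rayleigh quotient is bounded by $\lambda_k(\Graph)$; hence $\lambda_k(\NewGraph)\le\lambda_k(\Graph)$ by minimax, forcing $\lambda_k(\NewGraph)=\lambda_k(\Graph)$, and then the interlacing squeezes $\lambda_j(\NewGraph)=\lambda_j(\Graph)$ for all $j\le k$ as well. (One must be slightly careful that the $\psi_j$ are linearly independent; they are, being eigenfunctions for distinct indices chosen from an orthonormal basis, or at worst one takes an orthonormal set among them.) For the eigenfunction claim, I would invoke the last sentence of Theorem~\ref{thm:changing_vc}: with $\Lambda$ ranging over the distinct values among $\lambda_1(\Graph),\ldots,\lambda_k(\Graph)$, the common multiplicities agree on both graphs up to the final block, and the respective $\Lambda$-eigenspaces intersect in dimension $\min(m_\Lambda,\widetilde m_\Lambda)$; since the $\psi_j$ lie in $H^1(\NewGraph)$ and are $\Lambda$-eigenfunctions of $\Graph$, a dimension count shows they lie in the intersection, hence are genuine eigenfunctions of $\NewGraph$. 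Alternatively, and more cleanly, once $\lambda_j(\NewGraph)=\lambda_j(\Graph)$ is known, each $\psi_j\in H^1(\NewGraph)$ attains the Rayleigh quotient value $\lambda_j(\Graph)=\lambda_j(\NewGraph)$ within the appropriate minimax subspace, and equality in the Rayleigh quotient forces $\psi_j$ to be an eigenfunction of $\NewGraph$ at $\lambda_j(\NewGraph)$, as recorded at the end of Section~\ref{sec:prelim}.

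For the inductive step I would glue the vertices one at a time: let $\NewGraph_1$ be obtained from $\Graph$ by gluing $v_1$ and $v_2$, then $\NewGraph_2$ from $\NewGraph_1$ by gluing (the new vertex) with $v_3$, and so on, so that $\NewGraph_{m-1}=\NewGraph$. By the base-case argument applied to $\Graph\rightsquigarrow\NewGraph_1$, we get $\lambda_j(\NewGraph_1)=\lambda_j(\Graph)$ for $j\le k$ and the $\psi_j$ are eigenfunctions on $\NewGraph_1$; moreover the $\psi_j$ still satisfy the remaining level-point conditions $\psi_j(v_0^{(1)})=\psi_j(v_3)=\ldots=\psi_j(v_m)$ at the vertices not yet glued (here $v_0^{(1)}$ is the merged vertex, at which each $\psi_j$ takes the common value $\psi_j(v_1)=\psi_j(v_2)$). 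Hence the hypotheses of the corollary are satisfied for the pair $(\NewGraph_1, \{\psi_j\})$ with one fewer vertex to glue, and the induction proceeds. After $m-1$ steps we conclude $\lambda_j(\NewGraph)=\lambda_j(\Graph)$ for $j=1,\ldots,k$ and that $\psi_1,\ldots,\psi_k$ are eigenfunctions on $\NewGraph$ associated with $\lambda_1(\NewGraph),\ldots,\lambda_k(\NewGraph)$.

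The main obstacle, such as it is, is purely bookkeeping: making sure the identification of function spaces from Remark~\ref{rem:canonical-identification} is applied consistently across the successive gluings (so that ``$\psi_j$ is an eigenfunction on $\NewGraph_{i+1}$'' is an honest statement about the same function $\psi_j$ on $\Graph$), and checking that the level-point hypotheses are inherited at each stage — which they are, precisely because after gluing $v_1,v_2$ the merged vertex carries the common value $\psi_j(v_1)=\psi_j(v_2)$, so all the equalities $\psi_j(v_1)=\ldots=\psi_j(v_m)$ are preserved modulo the trivial relabelling. There is no serious analytic difficulty: everything reduces, step by step, to the already-established equality case in Theorem~\ref{thm:changing_vc} together with the elementary fact that a function achieving the minimax value is an eigenfunction.
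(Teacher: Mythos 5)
Your overall strategy---monotonicity of the eigenvalues under gluing in one direction, the minimax principle tested on the span of $\psi_1,\ldots,\psi_k$ in the other---is essentially the paper's. The paper packages it as an abstract statement about positive form perturbations (Lemma~\ref{cor:joining_points_general}): positivity gives $\lambda_i(\Graph)\leq\lambda_i(\NewGraph)$, and testing \eqref{eq:minimax_full} on $X_i=\aufspan\{\psi_1,\ldots,\psi_i\}\subset H^1(\NewGraph)$ gives the reverse inequality \emph{for each} $i\leq k$. In particular no induction on $m$ is needed: as the paper remarks after that lemma, the rank of the perturbation plays no role in this argument, so all $m$ vertices may be glued at once; your reduction to successive pairwise gluings is harmless, just longer.

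Two steps of your write-up need repair, though. First, the claim that ``the interlacing squeezes $\lambda_j(\NewGraph)=\lambda_j(\Graph)$ for all $j\leq k$'' once equality is known at the top index $k$ does not follow: interlacing only gives $\lambda_j(\Graph)\leq\lambda_j(\NewGraph)\leq\lambda_{j+1}(\Graph)$, and equality at index $k$ leaves, say, $\lambda_1(\NewGraph)$ free to sit anywhere in $[\lambda_1(\Graph),\lambda_2(\Graph)]$. The fix is already in your own toolkit and is exactly what the paper does: run the minimax test at every level $j\leq k$ with the $j$-dimensional subspace $\aufspan\{\psi_1,\ldots,\psi_j\}$ to get $\lambda_j(\NewGraph)\leq\lambda_j(\Graph)$ directly. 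Second, ``equality in the Rayleigh quotient forces $\psi_j$ to be an eigenfunction'' is only valid for the bottom of the spectrum or under the constrained characterisations \eqref{eq:varchar-principal}--\eqref{eq:varchar}; for higher $j$ it needs orthogonality to the lower eigenspaces of $\NewGraph$, which you have not established, and your alternative route through the eigenspace-intersection clause of Theorem~\ref{thm:changing_vc} requires a dimension count that is not automatic. The clean argument is more elementary: since $\psi_j\in H^1(\NewGraph)$ and the quadratic forms of $\Graph$ and $\NewGraph$ coincide on $H^1(\NewGraph)$, the identity $a(\psi_j,g)=\lambda_j(\Graph)\,(\psi_j,g)_{L^2}$, valid for all $g$ in the form domain of $\Graph$, holds in particular for all $g$ in the form domain of $\NewGraph$, so $\psi_j$ is a (weak, hence genuine) eigenfunction of $\NewGraph$ with eigenvalue $\lambda_j(\Graph)=\lambda_j(\NewGraph)$; equivalently, one checks that the $\delta$-conditions at $v_1,\ldots,v_m$ sum to the $\delta$-condition \eqref{eq:adding_delta_strengths} at the glued vertex. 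Finally, the linear-independence caveat you mention is genuinely needed for the span to have dimension $j$; the reading consistent with the paper's proof is that $\psi_1,\ldots,\psi_k$ are the first $k$ members of an orthonormal eigenbasis.
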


\begin{remark}
  In \cite[Proposition~3.1.6]{BerKuc_msm13} it is additionally shown
  that if an eigenvalue $\lambda_k(\Graph)$ is simple, with
  eigenfunction $\psi_k$ normalised to have $L^2$-norm one, it can be
  differentiated with respect to the strength $\gamma$ of the
  $\delta$-parameter at a given vertex $v$.  The value of the
  derivative is
  \begin{equation}
    \label{eq:eig_deriv_alpha}
    \frac{d\lambda_k}{d\gamma} = |\psi_k(v)|^2 
    = \left| \frac1\gamma \sum_{e \sim v}
      \nd{\psi_k}{e}{v} \right|^2.
  \end{equation}
\end{remark}

%%%%%%%%%%%%
\subsection{Operations increasing the volume}
\label{sec:op_increase_volume}
We will now consider operations that expand the 
graph in some way, either by scaling up a part of it or by attaching 
a new subgraph to it.

\begin{definition}[Inserting a graph at a vertex]
  \label{def:insert}
  Let $v_0$ be a vertex of $\Graph$ whose set of incident edges is
  $\{e_1,\ldots,e_k\}$ and let $\GraphH$ be another metric graph.
 Form a new graph
  $\NewGraph$ by removing $v_0$ from $\Graph$ and, for each 
  $i=1,\ldots,k$, attaching edge $e_i$ to some vertex $w=w(i)$ of $\GraphH$ 
  instead.  Let $w_1,\ldots,w_m$, $m \leq k$ be the list of vertices of 
  $\GraphH$ to which an edge has been so attached.  If $v_0$ is equipped 
  with the $\delta$-potential of strength $\gamma(v_0) \in (-\infty,\infty)$, then
  $\delta$-potentials should be placed at the vertices
  $w_1,\ldots,w_m$ in such a way that they sum to $\gamma(v_0)$. We then
  say that $\widetilde\Graph$ is formed by {\bf inserting $\mathcal H$
    into $\Graph$ at $v_0$}.
\end{definition}

\begin{figure}[H]
\begin{minipage}[l]{3cm}
\begin{tikzpicture}[scale=0.6]
\coordinate (a) at (2,0);
\coordinate (b) at (4,2);
\coordinate (c) at (2,4);
\coordinate (d) at (0,2);
\draw (c) -- (d) -- (a);
\draw[red] (a) -- (b) -- (c);
\draw[red] (b) -- (d);
\draw[fill] (a) circle (2pt);
\draw[fill=red] (b) circle (2pt);
\draw[fill] (c) circle (2pt);
\draw[fill] (d) circle (2pt);
\node at (b) [anchor=north west] {$v_0$};
\node at (2.9,3) [anchor=south west] {$e_1$};
\node at (2.9,1) [anchor=north west] {$e_3$};
\node at (2,2) [anchor=south] {$e_2$};
\draw (0.1,3.5) circle (0pt) node[right]{$\Graph$};
\end{tikzpicture}
\end{minipage}
\begin{minipage}[l]{.1cm}
\end{minipage}
\begin{minipage}[l]{3cm}
\begin{tikzpicture}[scale=0.6]
\coordinate (a) at (0,.9);
\coordinate (b) at (0,3.1);
\coordinate (c) at (1.8,2);
\draw (a) -- (b) -- (c) -- (a);
\draw[fill=red] (a) circle (2pt);
\draw[fill=red] (b) circle (2pt);
\draw[fill] (c) circle (2pt);
\node at (a) [anchor=north] {$w_2$};
\node at (b) [anchor=south] {$w_1$};
\draw (0,2) circle (0pt) node[right]{$\GraphH$};
\end{tikzpicture}
\end{minipage}
\begin{minipage}{1.5cm}
\begin{tikzpicture}[scale=0.8]
\draw[-{Stealth[scale=0.5,angle'=60]},line width=2.5pt] (4,0) -- (5,0);
\end{tikzpicture}
\end{minipage}
\begin{minipage}{6cm}
\begin{tikzpicture}[scale=0.6]
\coordinate (a) at (2,0);
\coordinate (c) at (2,4);
\coordinate (d) at (0,2);
\coordinate (a1) at (4.5,0.9);
\coordinate (b1) at (4.5,3.1);
\coordinate (c1) at (6.3,2);
\draw (c) -- (d) -- (a);
\draw[red] (a) -- (a1);
\draw (a1) -- (c1) -- (b1) -- (a1);
\draw[red] (d) -- (b1);
\draw[red] (b1) -- (c);
\draw[fill] (a) circle (2pt);
\draw[fill=red] (b1) circle (2pt);
\draw[fill=red] (a1) circle (2pt);
\draw[fill] (c1) circle (2pt);
\draw[fill] (c) circle (2pt);
\draw[fill] (d) circle (2pt);
\node at (3.1,3.5) [anchor=south] {$e_1$};
\node at (3,1.2) [anchor=north] {$e_3$};
\node at (a1) [anchor=north] {$w_2$};
\node at (b1) [anchor=south] {$w_1$};
\node at (2,2.5) [anchor=south] {$e_2$};
\draw (3.2,1.9) circle (0pt) node[right]{$\NewGraph$};
\end{tikzpicture}
\end{minipage}
\caption{Inserting $\GraphH$ into $\Graph$ at $v_0$, we obtain the
  graph $\NewGraph$ on the right.}
\label{fig:insert1}
\end{figure}

Whenever $w_1=\ldots=w_m$ we have the following special case.

\begin{definition}[Attaching a pendant graph]\label{def:attach}
  Assume that $\Graph$ and $\GraphH$ are given, with one distinguished
  vertex in each graph, say $v_1 \in \Graph$ and $w_1 \in \GraphH$.
  If $\NewGraph$ is formed by gluing together $v_1$ and
  $w_1$ in accordance to Definition~\ref{def:gluing}, we speak of {\bf
    attaching the pendant graph $\GraphH$ to $\Graph$}.
\end{definition}

\begin{figure}[H]
\begin{minipage}[l]{3.8cm}
\begin{tikzpicture}[scale=0.6]
\coordinate (a) at (0,0);
\coordinate (b) at (4,0);
\coordinate (c) at (0,4);
\coordinate (d) at (2,4);
\draw (c) -- (a) -- (b) -- (d) -- (c);
\draw[fill] (0,0) circle (2pt);
\draw[fill] (0,4) circle (2pt);
\draw[fill=red] (4,0) circle (2pt);
\draw[fill] (2,4) circle (2pt);
\node at (a) [anchor=north east] {$v_2$};
\node at (b) [anchor=north] {$v_1$};
\node at (c) [anchor=east] {$v_3$};
\node at (d) [anchor=south] {$v_4$};
\draw (1,2) circle (0pt) node[right]{$\Graph$};
\end{tikzpicture}
\end{minipage}
\begin{minipage}[l]{1.1cm}
\end{minipage}
\begin{minipage}[l]{3.8cm}
\begin{tikzpicture}[scale=0.6]
\coordinate (a) at (0,0);
\coordinate (b) at (0,4);
\coordinate (c) at (3,2);
\draw (a) -- (b) -- (c) -- (a);
\draw[fill=red] (0,0) circle (2pt);
\draw[fill] (0,4) circle (2pt);
\draw[fill] (3,2) circle (2pt);
\node at (a) [anchor=north] {$w_1$};
\node at (b) [anchor=south] {$w_2$};
\node at (c) [anchor=west] {$w_3$};
\draw (.8,2) circle (0pt) node[right]{$\GraphH$};
\end{tikzpicture}
\end{minipage}
\begin{minipage}{1.5cm}
\begin{tikzpicture}[scale=0.8]
\draw[-{Stealth[scale=0.5,angle'=60]},line width=2.5pt] (4,0) -- (5,0);
\end{tikzpicture}
\end{minipage}
\begin{minipage}{6cm}
\begin{tikzpicture}[scale=0.6]
\coordinate (a) at (0,0);
\coordinate (b) at (4,0);
\coordinate (c) at (0,4);
\coordinate (d1) at (2,4);
\coordinate (d2) at (4,4);
\coordinate (e) at (7,2);
\draw (d1) -- (c) -- (a) -- (b);
\draw (d2) -- (e) -- (b);
\draw (b) -- (d1);
\draw (b) -- (d2);
\draw[fill] (0,0) circle (2pt);
\draw[fill] (0,4) circle (2pt);
\draw[fill=red] (4,0) circle (2pt);
\draw[fill] (2,4) circle (2pt);
\draw[fill] (4,4) circle (2pt);
\draw[fill] (7,2) circle (2pt);
\node at (a) [anchor=north east] {$v_2$};
\node at (b) [anchor=north] {$v_1=w_1$};
\node at (c) [anchor=east] {$v_3$};
\node at (d1) [anchor=south] {$v_4$};
\node at (d2) [anchor=south] {$w_2$};
\node at (e) [anchor=west] {$w_3$};
\draw (2.7,3) circle (0pt) node[right]{$\NewGraph$};
\end{tikzpicture}
\end{minipage}
\caption{By gluing together $v_1,w_1$ we can attach the graph $\GraphH$ to
  $\Graph$, thus obtaining the graph $\NewGraph$ on the right.}
\label{fig:attachpend}
\end{figure}

Figure~\ref{fig:insert1} shows an example of inserting a graph at
$v_0$ and Figure~\ref{fig:attachpend} shows an example of attaching a
pendant graph.

\begin{theorem}
  \label{thm:increasing_vol}
  The following operations decrease the given eigenvalues.
  \begin{enumerate}
  \item \label{item:attaching_a_pendant}
    (Attaching a pendant graph) Suppose $\NewGraph$ is formed from
    $\Graph$ by attaching a pendant metric graph $\GraphH$ at a vertex
    $v_0 \in \mathcal{V}(\Graph)$. 
    If, for some $r$ and $k$,
    \begin{equation}
      \label{eq:pendant_eig_condition}
      \lambda_{r}(\GraphH) \leq \lambda_{k}(\Graph),
    \end{equation}
    then 
    \begin{equation}
      \label{eq:pendant_attach_result}
      \lambda_{k+r-1}(\NewGraph) \leq \lambda_k(\Graph).
    \end{equation}
    The inequality in \eqref{eq:pendant_attach_result} is strict if
    the eigenvalue $\lambda_k(\Graph)$ has an eigenfunction which does
    not vanish at $v_0$, $\lambda_k(\Graph)> \lambda_{k-1} 
    (\Graph)$ and $\lambda_k(\Graph)>\lambda_{r}(\GraphH)$.
  \item \label{item:inserting_a_graph} (Inserting a graph at a vertex)
    Suppose $\NewGraph$ is formed by inserting a graph $\GraphH$ at a
    vertex $v_0$ of $\Graph$.  Assume that only natural conditions
    were imposed at the vertices of $\GraphH$ prior to
    insertion.  Then, for all $k$ such that $\lambda_k(\Graph)\geq 0$,
    \begin{equation}
      \label{eq:insert_result}
      \lambda_k(\NewGraph) \leq \lambda_k(\Graph).
    \end{equation}
    The inequality in \eqref{eq:insert_result} is strict if 
    $\lambda_k(\Graph) > \max(0,\lambda_{k-1}(\Graph))$ and the
    eigenvalue $\lambda_k(\Graph)$ has an eigenfunction which does not
    vanish at $v_0$.
  \end{enumerate}
\end{theorem}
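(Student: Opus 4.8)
The plan is to deduce both parts from the variational principles by constructing trial functions on $\NewGraph$ that are built from eigenfunctions of the constituent pieces. For part~\eqref{item:attaching_a_pendant}, I would use the minimax characterisation of $\lambda_{k+r-1}(\NewGraph)$: it suffices to exhibit a $(k+r-1)$-dimensional subspace $S\subset H^1(\NewGraph)$ on which the Rayleigh quotient is bounded above by $\lambda_k(\Graph)$. The natural candidate is $S=\aufspan\{\psi_1,\ldots,\psi_k\}\oplus\aufspan\{\phi_1,\ldots,\phi_{r-1}\}$, where $\psi_1,\ldots,\psi_k$ are the first $k$ eigenfunctions of $\Graph$ extended by zero onto $\GraphH$, and $\phi_1,\ldots,\phi_{r-1}$ are the first $r-1$ eigenfunctions of $\GraphH$ extended by zero onto $\Graph$. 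Since the distinguished vertex $v_0=w_1$ is a single point, extension by zero across it keeps functions in $H^1(\NewGraph)$, and the $L^2$ and Dirichlet inner products split as orthogonal direct sums over the two pieces. On a combination $f=g+h$ with $g\in\aufspan\{\psi_i\}$, $h\in\aufspan\{\phi_j\}$, the Rayleigh quotient is a weighted average of those of $g$ (at most $\lambda_k(\Graph)$) and $h$ (at most $\lambda_{r-1}(\GraphH)\leq\lambda_r(\GraphH)\leq\lambda_k(\Graph)$ by hypothesis), hence at most $\lambda_k(\Graph)$. The only subtlety is checking $\dim S=k+r-1$, i.e.\ that the two blocks meet only in $\{0\}$; since one block is supported on $\Graph$ and the other on $\GraphH$, and these overlap only at the single point $v_0$, independence is immediate unless some nonzero $\psi_i$-combination vanishes on all of $\Graph$, which is impossible.

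For part~\eqref{item:inserting_a_graph}, the cleanest route is to realise the insertion as a two-step composition of operations already covered. First, form $\NewGraph'$ from $\Graph\sqcup\GraphH$ by attaching $\GraphH$ as a pendant at $v_0$ — more precisely, I would first split off a "bouquet" description: inserting $\GraphH$ at $v_0$ and then gluing the attachment vertices $w_1,\ldots,w_m$ of $\GraphH$ back to a single vertex produces (after suppressing a degree-two vertex if needed) a graph from which $\Graph$ is recovered by cutting through. Concretely: let $\GraphH$ carry only natural conditions; build $\NewGraph$ by insertion; now \emph{glue} the vertices of $\NewGraph$ to which $e_1,\ldots,e_k$ are attached — by Theorem~\ref{thm:changing_vc}\eqref{item:gluing_vertices} each such gluing only increases eigenvalues — and the end result of gluing all of them is exactly $\Graph$ with a pendant copy of $\GraphH$ attached at $v_0$, call it $\GraphH'$. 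Attaching that pendant with $\lambda_1(\GraphH)=0\le\lambda_k(\Graph)$ (taking $r=1$ in part~\eqref{item:attaching_a_pendant}) gives $\lambda_k(\GraphH')\le\lambda_k(\Graph)$ whenever $\lambda_k(\Graph)\ge0$. Chaining the two, $\lambda_k(\NewGraph)\le\lambda_k(\GraphH')\le\lambda_k(\Graph)$, which is \eqref{eq:insert_result}.

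For the strictness claims I would argue by contradiction using the equality cases of the tools invoked. If equality holds in \eqref{eq:pendant_attach_result} while $\lambda_k(\Graph)>\lambda_{k-1}(\Graph)$ and $\lambda_k(\Graph)>\lambda_r(\GraphH)$, then the trial space $S$ above must consist of $\lambda_k(\NewGraph)$-eigenfunctions; in particular the extension-by-zero of a $\lambda_k(\Graph)$-eigenfunction $\psi_k$ is an eigenfunction on $\NewGraph$. But such a function vanishes identically on $\GraphH$, hence vanishes on a whole edge, so by unique continuation for the eigenvalue equation it vanishes at $v_0$ and, being an eigenfunction that is zero together with its derivative at $v_0$ from the $\GraphH$-side, the matching condition at $v_0$ forces the normal derivatives from the $\Graph$-side to sum to a value forcing $\psi_k$ itself to vanish there — contradicting the assumption that some $\lambda_k(\Graph)$-eigenfunction does not vanish at $v_0$. (One must be slightly careful that the relevant eigenspaces have the dimensions the minimax argument assigns them; this is where $\lambda_k>\lambda_{k-1}$ and $\lambda_k>\lambda_r(\GraphH)$ enter.) For part~\eqref{item:inserting_a_graph}, strictness propagates from the pendant-attachment step by the same reasoning, now with $r=1$, using $\lambda_k(\Graph)>\max(0,\lambda_{k-1}(\Graph))$.

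The main obstacle I anticipate is not the inequalities themselves but the bookkeeping in the equality analysis: tracking exactly which combinations of eigenfunctions land in the top eigenspace of $\NewGraph$, and converting "eigenfunction vanishing on an edge" into "eigenfunction vanishing at $v_0$" cleanly enough to contradict the non-vanishing hypothesis, while correctly handling possible multiplicities. The reduction of insertion to (pendant attachment) $\circ$ (vertex gluing) also needs to be stated carefully so that the intermediate graph is genuinely the one obtained by gluing, including the treatment of any degree-two vertices via Remark~\ref{rem:dummy}.
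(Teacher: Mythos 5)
Your part~(\ref{item:inserting_a_graph}) follows essentially the paper's route (insertion realised as pendant attachment with $r=1$ followed by cutting/gluing at the attachment vertices, via Theorem~\ref{thm:changing_vc}(\ref{item:gluing_vertices})), and would be fine once part~(\ref{item:attaching_a_pendant}) is in place. But your proof of part~(\ref{item:attaching_a_pendant}) has a genuine gap at its very first step: on a metric graph, $H^1(\NewGraph)\subset C(\NewGraph)$, so a function must be continuous \emph{at the glued vertex} $v_0=w_1$; a single point does not have zero capacity in one dimension. Hence the extension by zero of a $\lambda_k(\Graph)$-eigenfunction $\psi_i$ with $\psi_i(v_0)\neq 0$ (and likewise of a $\GraphH$-eigenfunction $\phi_j$ with $\phi_j(w_1)\neq 0$) is \emph{not} in $H^1(\NewGraph)$: it has a jump at $v_0$. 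Your trial space $S$ therefore lives in the form domain of the \emph{disjoint} union $\Graph\,\dot{\cup}\,\GraphH$, not of $\NewGraph$, and the minimax over $S$ only bounds eigenvalues of $\Graph\,\dot{\cup}\,\GraphH$. This is precisely where the ``$-1$'' in $\lambda_{k+r-1}$ comes from: one must either start from the $(k+r)$-dimensional span of the first $k$ eigenfunctions of $\Graph$ and the first $r$ of $\GraphH$ and pass to the codimension-one subspace of functions matching at $v_0$ (which does lie in $H^1(\NewGraph)$ and has dimension $\geq k+r-1$), or, as the paper does, observe that $\lambda_k(\Graph)=\lambda_n(\Graph\,\dot{\cup}\,\GraphH)$ for some $n\geq k+r$ and invoke the gluing interlacing $\lambda_{n-1}(\NewGraph)\leq\lambda_n(\Graph\,\dot{\cup}\,\GraphH)$ of Theorem~\ref{thm:changing_vc}. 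As written, your argument proves nothing about $\NewGraph$.

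The strictness discussion inherits this problem, and is additionally too strong where you assert that equality forces the whole trial space $S$ to consist of $\lambda$-eigenfunctions of $\NewGraph$: equality in a minimax bound only guarantees that a minimising subspace \emph{contains} an eigenfunction (cf.\ Lemma~\ref{lem:min-max-equality}), and in any case $S$ is not an admissible subspace for $\NewGraph$. The paper instead handles equality through the eigenspace-intersection statement of Theorem~\ref{thm:changing_vc} (Remark~\ref{rem:catch_up}): if $\lambda_{n-1}(\NewGraph)$ were to equal $\lambda_n(\Graph\,\dot{\cup}\,\GraphH)=\Lambda$ with $\lambda_{n-1}(\Graph\,\dot{\cup}\,\GraphH)<\Lambda$, then every $\Lambda$-eigenfunction of the disjoint union would be an eigenfunction of $\NewGraph$; but the eigenfunction of $\Graph$ not vanishing at $v_0$, extended by zero to $\GraphH$, is such an eigenfunction of the disjoint union and fails to lie in the form domain of $\NewGraph$ (it is discontinuous at $v_0$) --- no unique continuation argument is needed, and your appeal to it does not repair the underlying issue. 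The hypotheses $\lambda_k(\Graph)>\lambda_{k-1}(\Graph)$ and $\lambda_k(\Graph)>\lambda_r(\GraphH)$ are used exactly to ensure $n\geq k+r$ when $n$ is taken to be the \emph{first} occurrence of $\Lambda$ in the spectrum of the disjoint union.
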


\begin{remark}
  \label{rem:increasing_vol}
  An important special case of
  Theorem~\ref{thm:increasing_vol}(\ref{item:attaching_a_pendant}) is
  when the conditions are natural at all vertices of $\GraphH$, while
  $\Graph$ has non-negative spectrum (this holds in particular when
  $\mathcal{V}_R^- = \emptyset$).  In this case
  \begin{equation}
    \label{eq:pendant_natural}
    0 = \lambda_1(\GraphH) \leq \lambda_1(\Graph) \leq \lambda_{k}(\Graph)
  \end{equation}
  and Theorem~\ref{thm:increasing_vol}(\ref{item:attaching_a_pendant})
  with $r=1$ shows that attaching the pendant lowers \emph{all} eigenvalues
  of $\Graph$:
  \begin{equation}
    \label{eq:pendant_attach_result_special}
    \lambda_k (\NewGraph) \leq \lambda_k (\Graph) \qquad \text{for all }
    k \geq 1.
  \end{equation}
  The inequality \eqref{eq:pendant_attach_result_special} was noted in 
  \cite[Theorem~2]{KMN} (for $\mathcal{V}=\mathcal{V}_N$ and $k=1$) 
  and \cite[Proposition~3.1]{Roh_pams17} (for $\mathcal{V}=\mathcal{V}_N$ 
  and general $k$), and generalised in \cite[Theorem~3.5]{RohSei_arx18} to 
  more general self-adjoint vertex conditions.
\end{remark}

Several useful inequalities now follow.

\begin{corollary}
\label{cor:increasing_vol}
  \begin{enumerate}
  \item \label{item:lengthening_an_edge} (Lengthening an edge) 
    Let $\NewGraph$ be obtained from $\Graph$ by lengthening the edge
    $e$. If $\lambda_k (\Graph) \geq 0$, then
    \begin{equation}
      \label{eq:lengthening_edge}
      \lambda_k(\NewGraph) \leq \lambda_k(\Graph)
      \qquad \text{for all } k\geq 1.
    \end{equation}
    The inequality is strict if
    $\lambda_k(\Graph) > \max(0,\lambda_{k-1}(\Graph))$ and there is
    an eigenfunction corresponding to $\lambda_k (\Graph)$ which does
    not vanish identically on $e$.
  \item \label{item:adding_an_edge} (Adding an edge between existing
    vertices) Suppose there exist $v,w \in \mathcal{V}(\Graph)$ and a
    choice of $n\geq1$ first eigenfunctions $\psi_1,\ldots,\psi_n$
    such that
    \begin{equation}
      \label{eq:functions_equal}
      \psi_k(v)=\psi_k(w)     
    \end{equation}
    for all $k=1,\ldots,n$.  If $\lambda_k (\Graph) \geq 0$, then the graph 
    $\NewGraph$ formed by inserting an edge of arbitrary length between $v$ 
    and $w$ satisfies
    \begin{displaymath}
      \lambda_k(\NewGraph) \leq \lambda_k(\Graph),
      \qquad k=1,\ldots,n.
    \end{displaymath}
  \item \label{item:adding_a_long_edge} (Adding a long edge between
    existing vertices) Suppose $\NewGraph$ is formed by adding an edge 
    of length $\ell$ connecting existing vertices $v$ and $w$ of $\Graph$.  
    Then $(\pi/\ell)^2 \leq \lambda_{k_0}(\Graph)$ implies
    $\lambda_k(\NewGraph) \leq \lambda_k(\Graph)$ for all $k\geq k_0$.
  \item \label{item:shrinking_redundant} (Shrinking a redundant edge)
    Let all vertices of the connected graph $\Graph$ have natural conditions.
    Suppose there exist an eigenfunction $\psi$ associated with
    $\lambda_2^N (\Graph)$ and an edge $e\in \mathcal{E} (\Graph)$
    such that $\psi|_e \equiv 0$. Then the graph $\NewGraph$ formed by
    shrinking $e$ to a point (i.e., removing $e$ and gluing its
    incident vertices together) satisfies
    $\lambda_2^N (\NewGraph) = \lambda_2^N (\Graph)$, and
    $\psi|_{\Graph \setminus e}$ is an eigenfunction associated with
    $\lambda_2^N (\NewGraph)$ (up to the canonical identification
    described in Remark~\ref{rem:canonical-identification}).
  \end{enumerate}  
\end{corollary}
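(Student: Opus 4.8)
The plan is to derive each part as a specialisation of Theorem~\ref{thm:increasing_vol}, with part~(\ref{item:shrinking_redundant}) requiring an additional appeal to the equality analysis. For part~(\ref{item:lengthening_an_edge}), I would introduce a dummy vertex $v_0$ in the interior of $e$ (cf.\ Remark~\ref{rem:dummy}), splitting $e$ into two sub-edges; lengthening $e$ is then exactly the operation of inserting a path graph $\GraphH$ of the appropriate length at $v_0$, where $\GraphH$ carries only natural conditions. Since $\lambda_k(\Graph)\ge 0$, Theorem~\ref{thm:increasing_vol}(\ref{item:inserting_a_graph}) gives $\lambda_k(\NewGraph)\le\lambda_k(\Graph)$. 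For the strictness clause, an eigenfunction not vanishing identically on $e$ can be arranged (by choosing $v_0$ suitably) to be non-vanishing at $v_0$, so the strictness criterion of Theorem~\ref{thm:increasing_vol}(\ref{item:inserting_a_graph}) applies; one should remark that the value of $\lambda_k(\Graph)$ does not depend on the (spectrally irrelevant) choice of dummy vertex, and that an eigenfunction which does not vanish identically on $e$ must be non-zero at \emph{some} interior point of $e$, which we then take as $v_0$.

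For part~(\ref{item:adding_an_edge}), the hypothesis $\psi_k(v)=\psi_k(w)$ for $k=1,\dots,n$ allows us to first apply Corollary~\ref{cor:joining_points}: gluing $v$ and $w$ produces an intermediate graph $\mathcal{H}_0$ with $\lambda_k(\mathcal{H}_0)=\lambda_k(\Graph)$ and with $\psi_1,\dots,\psi_n$ still eigenfunctions. Adding an edge of arbitrary length between $v$ and $w$ in $\Graph$ is the same as attaching a loop (pendant) at the glued vertex of $\mathcal{H}_0$, or equivalently inserting a suitable graph; since the attached loop, carrying natural conditions, has $\lambda_1=0\le\lambda_k(\mathcal{H}_0)$ for the relevant $k$ (using $\lambda_k(\Graph)\ge 0$), Remark~\ref{rem:increasing_vol} gives $\lambda_k(\NewGraph)\le\lambda_k(\mathcal{H}_0)=\lambda_k(\Graph)$ for $k=1,\dots,n$. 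Part~(\ref{item:adding_a_long_edge}) is the same construction but we instead attach a pendant path of length $\ell$; its lowest eigenvalue relevant here is governed by $(\pi/\ell)^2$ being an eigenvalue of the interval with one Dirichlet and one natural end (or $(\pi/\ell)^2$ being $\lambda_2$ of the interval of length $2\ell$), and the hypothesis $(\pi/\ell)^2\le\lambda_{k_0}(\Graph)$ together with Theorem~\ref{thm:increasing_vol}(\ref{item:attaching_a_pendant}) (with $r$ chosen so that $\lambda_r(\GraphH)\le\lambda_{k}(\Graph)$ for $k\ge k_0$) yields $\lambda_k(\NewGraph)\le\lambda_k(\Graph)$ for all $k\ge k_0$; here one must be slightly careful about which pendant graph and which index $r$ to use, matching the counting so that the shift $k+r-1$ in \eqref{eq:pendant_attach_result} reproduces exactly the claimed range $k\ge k_0$.

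For part~(\ref{item:shrinking_redundant}), the situation is reversed — we contract rather than expand — so I would argue via the equality case. Observe that $\Graph$ is obtained from $\NewGraph$ by \emph{lengthening} (more precisely, re-inserting) the edge $e$; applying part~(\ref{item:lengthening_an_edge}) with the roles reversed gives $\lambda_2^N(\Graph)\le\lambda_2^N(\NewGraph)$ — wait, the monotonicity goes the wrong way, so instead I would argue directly: the function $\psi|_{\Graph\setminus e}$ descends, under the canonical identification of Remark~\ref{rem:canonical-identification}, to a function in $H^1(\NewGraph)$ (continuity at the glued vertex holds because $\psi$ vanishes on $e$, hence vanishes at both endpoints of $e$, which are being identified), it has mean zero over $\NewGraph$ since $\int_e\psi=0$, and its Rayleigh quotient equals $\lambda_2^N(\Graph)$. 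This shows $\lambda_2^N(\NewGraph)\le\lambda_2^N(\Graph)$. For the reverse inequality, note $H^1(\NewGraph)$ (extended by zero on $e$) embeds into $H^1(\Graph)$ preserving Rayleigh quotients, and constants on $\NewGraph$ pull back to constants on $\Graph$ plus zero on $e$ — one must check the mean-zero constraints are compatible — giving $\lambda_2^N(\Graph)\le\lambda_2^N(\NewGraph)$; combining, we get equality, and since $\psi|_{\Graph\setminus e}$ realises the infimum in \eqref{eq:varchar} for $\NewGraph$, it is the desired eigenfunction. The main obstacle I anticipate is the bookkeeping in parts~(\ref{item:adding_an_edge})–(\ref{item:adding_a_long_edge}): correctly identifying the attached/inserted graph and the index $r$ so that the eigenvalue-counting in Theorem~\ref{thm:increasing_vol} produces precisely the stated index ranges, and in part~(\ref{item:shrinking_redundant}) verifying carefully that the mean-zero orthogonality constraint behaves correctly under the extension-by-zero and the canonical identification.
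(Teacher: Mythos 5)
Parts (\ref{item:lengthening_an_edge}) and (\ref{item:adding_an_edge}) of your proposal are correct and essentially the paper's own argument: a dummy vertex plus Theorem~\ref{thm:increasing_vol}(\ref{item:inserting_a_graph}) for the first, and Corollary~\ref{cor:joining_points} followed by inserting a one-edge graph at the glued vertex for the second (note only that ``attaching a loop at the glued vertex'' by itself yields $\NewGraph$ with $v$ and $w$ still identified, so you need the insertion, or equivalently a final cut through the glued vertex via Theorem~\ref{thm:changing_vc}(\ref{item:gluing_vertices}), to recover $\NewGraph$). The genuine gap is in part (\ref{item:adding_a_long_edge}): the bookkeeping you defer is the entire content of that item, and your sketch does not contain the mechanism that makes it work. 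You cannot start by gluing $v$ and $w$ here (there is no hypothesis that eigenfunctions take equal values at $v$ and $w$, so Corollary~\ref{cor:joining_points} is unavailable, and plain gluing raises eigenvalues), and attaching a pendant path at $v$ alone does not produce $\NewGraph$. The paper's proof is a two-step surgery: attach the edge $\GraphH$ of length $\ell$ with natural conditions as a pendant at $v$; since $\lambda_2^N(\GraphH)=(\pi/\ell)^2\leq\lambda_{k_0}(\Graph)$, Theorem~\ref{thm:increasing_vol}(\ref{item:attaching_a_pendant}) with $r=2$ gives $\lambda_{k+1}(\widehat{\Graph})\leq\lambda_k(\Graph)$ for all $k\geq k_0$; then glue the free endpoint of $\GraphH$ to $w$, and the interlacing of Theorem~\ref{thm:changing_vc}(\ref{item:gluing_vertices}) gives $\lambda_k(\NewGraph)\leq\lambda_{k+1}(\widehat{\Graph})$, which restores the index. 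Your spectral identifications are also incorrect: $(\pi/\ell)^2$ is not an eigenvalue of the Dirichlet--Neumann interval of length $\ell$ (whose lowest eigenvalue is $(\pi/(2\ell))^2$), nor the second eigenvalue of the Neumann interval of length $2\ell$; it is $\lambda_2^N$ of the Neumann interval of length $\ell$, and it is exactly this fact, combined with $r=2$ and the subsequent gluing, that makes the count come out to $k\geq k_0$.

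In part (\ref{item:shrinking_redundant}), your forward inequality is fine (using $\psi|_{\Graph\setminus e}$ as a mean-zero test function on $\NewGraph$ is a legitimate, slightly different route from the paper), but your reverse inequality as written fails: extending a function $\phi\in H^1(\NewGraph)$ by zero on $e$ is discontinuous at the glued vertex unless $\phi$ vanishes there, so it does not lie in $H^1(\Graph)$, and the second eigenfunction of $\NewGraph$ need not vanish at that vertex; likewise constants on $\NewGraph$ do not pull back to ``constants plus zero''. The problem is continuity, not just the mean-zero constraint you flagged. A repair in your spirit is to extend $\phi$ by the constant value $\phi(v_0)$ on $e$ and subtract the resulting mean: the Dirichlet integral is unchanged while the $L^2$ norm of the mean-zero part can only increase, so the Rayleigh quotient does not increase and $\lambda_2^N(\Graph)\leq\lambda_2^N(\NewGraph)$ follows. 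The paper argues differently: it first glues the two endpoints of $e$ (legitimate by Corollary~\ref{cor:joining_points}, since $\psi$ vanishes at both), so that $e$ becomes a pendant loop and Theorem~\ref{thm:increasing_vol}(\ref{item:attaching_a_pendant}) yields $\lambda_2^N(\Graph)\leq\lambda_2^N(\NewGraph)$, and then checks directly that $\psi|_{\Graph\setminus e}$ satisfies the eigenvalue equation and vertex conditions on $\NewGraph$, so that the nonzero value $\lambda_2^N(\Graph)$ is an eigenvalue of $\NewGraph$ and hence must equal $\lambda_2^N(\NewGraph)$. Either route closes the argument; yours, as stated, does not.
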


\begin{remark} 
  Part~(\ref{item:lengthening_an_edge}) of Corollary~\ref{cor:increasing_vol}
  in general does not hold for negative eigenvalues, see for example
  \cite{ExJe}.  Note that in a graph with all natural conditions
  equation \eqref{eq:functions_equal} is trivially always satisfied 
  with $n=1$, since the eigenfunction is constant; in particular, for $n=2$, it 
  suffices to check \eqref{eq:functions_equal} for $\psi_2$ only.
  Part~(\ref{item:adding_an_edge}) for $\lambda_2^N(\Graph)$ probably 
  appeared for the first time in \cite[Theorem~3]{KMN}, where it was also 
  observed that inserting an edge between two existing vertices may not
  decrease $\lambda_2^N$, unless the edge is sufficiently long. Part
  (\ref{item:adding_a_long_edge}) gives a quantification of how long
  this has to be, and also reconciles this observation with the Weyl
  asymptotics, which requires that all \emph{sufficiently high}
  eigenvalues must decrease upon the insertion of an additional edge.
\end{remark}

\begin{remark}[Hadamard formula]
  \label{rem:hadamard}
  If an eigenvalue $\lambda = \lambda(\Graph)$ is simple, there is a
  quantitative expression for its increase if an edge is lengthened,
  which we refer to as a \emph{Hadamard formula}, by way of analogy with
  similar formulae on domains:
  \begin{equation}
    \label{eq:hadamard_formula}
    \frac{d\lambda}{d|e|} = -\mathscr{E}_e := 
    -\left(\psi'(x)^2 + \lambda\psi(x)^2\right), \qquad x\in e,
  \end{equation}
  where $|e|$ is the length of the edge $e$, $\psi$ is the
  $\lambda$-eigenfunction, normalised to have $L^2(\Graph)$-norm one,
  and the expression $\mathscr E_e$, often called the \emph{Pr\"ufer
    amplitude} of the edge $e$, can be easily shown to be independent
  of the location $x\in e$.  This has the following
    immediate consequence, which we will use repeatedly and thus state
    explicitly: if we lengthen one edge $e_1$ and shorten another
    $e_2$ by the same amount, then the derivative of a simple
    eigenvalue $\lambda$ with respect to this operation at the
    identity exists and has the same sign as
  \begin{equation}
  \label{eq:simple-hadamard}
	-\mathscr{E}_{e_1} + \mathscr{E}_{e_2},
  \end{equation}
  with $\mathscr{E}_{e_1}$, $\mathscr{E}_{e_2}$ as in \eqref{eq:hadamard_formula}.

  The formula \eqref{eq:hadamard_formula} is proved in \cite[Proof of
  the Lemma]{Fri_ijm05}, \cite[Appendix~A]{CdV_ahp15} and
  \cite[Lemma~5.2]{BanLev_ahp17} for natural vertex conditions only,
  although the same proof also works if $\Graph$ has some Dirichlet
  conditions (see also \cite[Section~3.1.4]{BerKuc_msm13} for the case
  of a degree one vertex with Dirichlet conditions), and we will use
  it without further comment for these conditions. Actually, the same
  formula turns out to hold in greater generality, for \emph{general}
  self-adjoint vertex conditions (including $\delta$-type). We intend
  to return to this point in a later work.
\end{remark}

%%%%%%%%%%%%
\subsection{Operations transferring the volume}
\label{sec:op_transfer_volume}

We now list some useful new principles based on changing the geometry 
of the graph by moving edges around, while keeping the total length
constant.

\begin{definition}[Transplantation]
  \label{def:transplantation}
  Cut through some of the vertices of $\Graph$ (in the sense of
  Definition~\ref{def:cutting}) to produce two disjoint metric graphs
  $\mathcal R,\mathcal C$ (neither of which is required to be connected). 
  Assume that only natural conditions equip the vertices
  of $\mathcal{C}$. Take any connected metric graphs
  $\GraphH_1, \ldots, \GraphH_k$ with purely natural vertex conditions
  and such that $|\GraphH_1|+\ldots+|\GraphH_k|=|\mathcal C|$. If
  $\NewGraph$ is formed by inserting each $\GraphH_i$ into
  $\mathcal{R}$ at some vertices
  $v_1,\ldots,v_k\in \mathcal V\setminus\mathcal V_D$ in accordance
  with Definition~\ref{def:insert}, then we say that $\NewGraph$ is
  obtained by {\bf transplanting the subgraph $\mathcal{C}$ to the
    subgraphs $\GraphH_1,\ldots,\GraphH_k$ at $v_1, \ldots, v_k$} (for
  short, transplantation of $\mathcal{C}$ to $v_1,\ldots,v_k$).
\end{definition}

An important special case consists of transplanting one or more edges
to a vertex $v$ by either inserting a new pendant edge at $v$, as
depicted in Figure~\ref{fig:pumpkin-chain-presurg}, or lengthening
existing edges incident to $v$.

\begin{figure}[H]
\label{fig:pumpkin-chain-presurg}
\begin{minipage}{6cm}
\begin{tikzpicture}[scale=0.6]
\coordinate (a) at (0,0);
\coordinate (b) at (2,0);
\coordinate (c) at (6,0);
\coordinate (d) at (9,0);
%\coordinate (e) at (11,0);
\draw (a) -- (b);
\draw[red,bend left]  (a) edge (b);
\draw[red,bend right]  (a) edge (b);
\draw (b) -- (c);
\draw[bend right]  (b) edge (c);
\draw[bend left=70]  (b) edge (c);
\draw[bend right=70]  (b) edge (c);
\draw[bend left]  (b) edge (c);
\draw (c) -- (d);
\draw[fill] (0,0) circle (2pt);
\draw[fill] (2,0) circle (2pt);
\draw[fill] (6,0) circle (2pt);
\draw[fill] (9,0) circle (2pt);
\node at (6,0) [anchor=south] {$v_0$};
\node at (1,-1) [anchor=south] {$e_2$};
\node at (1,1) [anchor=north] {$e_1$};
\end{tikzpicture}
\end{minipage}
\begin{minipage}{1.5cm}
\begin{tikzpicture}[scale=0.8]
\draw[-{Stealth[scale=0.5,angle'=60]},line width=2.5pt] (4,0) -- (5,0);
\end{tikzpicture}
\end{minipage}
\begin{minipage}{6cm}
\begin{tikzpicture}[scale=0.6]
\coordinate (a) at (0,0);
\coordinate (b) at (2,0);
\coordinate (c) at (6,0);
\coordinate (d) at (9,0);
\coordinate (f) at (8.5,1);
\draw (a) -- (b);
\draw (b) -- (c);
\draw[bend right]  (b) edge (c);
\draw[bend left=70]  (b) edge (c);
\draw[bend right=70]  (b) edge (c);
\draw[bend left]  (b) edge (c);
\draw (c) -- (d);
\draw[red] (c) -- (f);
\draw[fill] (0,0) circle (2pt);
\draw[fill] (2,0) circle (2pt);
\draw[fill] (6,0) circle (2pt);
\draw[fill] (9,0) circle (2pt);
\draw[fill] (8.5,1) circle (2pt);
\node at (6,0) [anchor=south] {$v_0$};
\node at (6.5,.5) [anchor=south west] {$e_0$};
\end{tikzpicture}
\end{minipage}
\caption{An example of transplantation: the graph on the right is obtained by transplantation of $\{e_1,e_2\}$ to $\GraphH:=\{e_0\}$ at $v_0$.}
\end{figure}
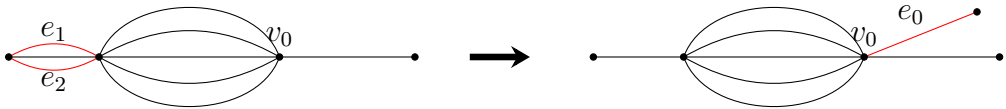

\begin{definition}[Unfolding edges]
  \label{def:unfolding_edges}
  Suppose $\Graph$ has $ k \geq 2 $ selected parallel
  edges $e_1, e_2, \ldots, e_k$ between the vertices $v_1$ and $v_2$.\footnote{We 
  explicitly allow the existence of further edges between $v_1$ and $v_2$, 
  as well as the possibility that $v_1 = v_2$.}
  Then the operation of replacing the parallel edges by
  a single edge of length $|e_1| + |e_2| + \dots + |e_k|$ running
  between $v_1$ and $v_2$ (preserving the 
  $\delta$-condition strengths at $v_1$ and $v_2$) is called {\bf unfolding 
    parallel edges}.
  
  Let $e_1, e_2, \ldots, e_k$ be pendant edges which are attached to
  the same vertex $v$ and have natural conditions at their endpoints.
  The operation of replacing $e_1, e_2, \ldots, e_k$ by a single
  pendant edge at $v$ of length $|e_1| + |e_2| + \dots + |e_k|$ (and
  again preserving the $\delta$-condition strength at $v$) 
  is called {\bf unfolding pendant edges}.
\end{definition}

\begin{figure}[H]
\label{fig:pumpkin-chain-symmetrisation}
\begin{minipage}{4.5cm}
\begin{tikzpicture}[scale=0.6]
\coordinate (a) at (0,0);
\coordinate (b) at (1.5,1.5);
\coordinate (c) at (2,4);
\coordinate (d) at (4,2);
\coordinate (e1) at (0.5,4);
\coordinate (e2) at (0.7,5.3);
\coordinate (f1) at (6,3);
\coordinate (f2) at (6,2);
\draw (a) -- (b);
\draw (b) -- (c);
\draw (b) -- (d);
\draw (c) -- (e1);
\draw (c) -- (e2);
\draw (d) -- (f1);
\draw (d) -- (f2);
\draw[red,bend left]  (c) edge (d);
\draw[red,bend right]  (c) edge (d);
\draw[fill] (a) circle (2pt);
\draw[fill] (b) circle (2pt);
\draw[fill] (c) circle (2pt);
\draw[fill] (d) circle (2pt);
\draw[fill] (e1) circle (2pt);
\draw[fill] (f1) circle (2pt);
\draw[fill] (e2) circle (2pt);
\draw[fill] (f2) circle (2pt);
\node at (c) [anchor=south] {$v_1$};
\node at (d) [anchor=north] {$v_2$};
\node at (3.5,3.3) [anchor=south] {$e_1$};
\node at (2.5,2.7) [anchor=north] {$e_2$};
\end{tikzpicture}
\end{minipage}
\begin{minipage}{1.5cm}
\begin{tikzpicture}[scale=0.8]
\draw[-{Stealth[scale=0.5,angle'=60]},line width=2.5pt] (4,0) -- (5,0);
\end{tikzpicture}
\end{minipage}
\begin{minipage}{6cm}
\begin{tikzpicture}[scale=0.6]
\coordinate (a) at (0,0);
\coordinate (b) at (1.5,1.5);
\coordinate (c) at (1,4);
\coordinate (d) at (4,1);
\coordinate (e1) at (-0.5,4);
\coordinate (e2) at (-0.2,5.3);
\coordinate (f1) at (6,2);
\coordinate (f2) at (6,1);
\draw (a) -- (b);
\draw (b) -- (c);
\draw (b) -- (d);
\draw (c) -- (e1);
\draw (c) -- (e2);
\draw (d) -- (f1);
\draw (d) -- (f2);
\draw[red]  (c) -- (d);
\draw[fill] (a) circle (2pt);
\draw[fill] (b) circle (2pt);
\draw[fill] (c) circle (2pt);
\draw[fill] (d) circle (2pt);
\draw[fill] (e1) circle (2pt);
\draw[fill] (f1) circle (2pt);
\draw[fill] (e2) circle (2pt);
\draw[fill] (f2) circle (2pt);
\node at (c) [anchor=south] {$v_1$};
\node at (d) [anchor=north] {$v_2$};
\node at (2.5,2.5) [anchor=north] {$e_0$};
\end{tikzpicture}
\end{minipage}
  \caption{Unfolding the parallel edges $e_1$ and $e_2$ in the graph on the
    left to obtain the one on the right, that is, replacing the loop
    $e_1,e_2$ with a single edge $e_0$ of equal total length.}
  \label{fig:edgesubstitution}
\end{figure}
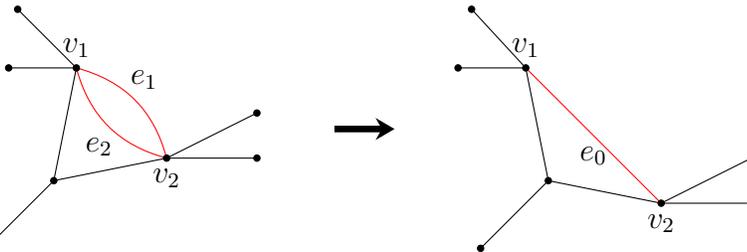

A variation on the unfolding of parallel edges is symmetrising them, possibly 
while reducing their number.

\begin{definition}
  Suppose $\Graph$ has $ k \geq 2 $ selected parallel edges
  $e_1, \ldots, e_k$ of arbitrary lengths $|e_1|,\ldots,|e_k|>0$
  between the vertices $v_1$ and $v_2$.\footnote{Here, as well, we emphasise that we allow both 
  the existence of further parallel edges between $v_1$ and $v_2$, and 
  also $v_1 = v_2$.}  We say
  $\NewGraph$ is formed from $\Graph$ by {\bf symmetrisation} of
  $e_1, \ldots, e_k$ if these parallel edges are replaced by $m\leq k$
  parallel edges, each of length $(|e_1|+\ldots+|e_k|)/m$, and the
  $\delta$-condition strengths at $v_1$ and $v_2$ are preserved.
\end{definition}

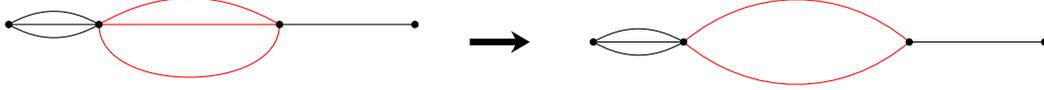
\begin{figure}[H]
\label{fig:pumpkin-chain-symmetrisation}
\begin{minipage}{6cm}
\begin{tikzpicture}[scale=0.6]
\coordinate (a) at (0,0);
\coordinate (b) at (2,0);
\coordinate (c) at (6,0);
\coordinate (d) at (9,0);
\draw (a) -- (b);
\draw[bend left]  (a) edge (b);
\draw[bend right]  (a) edge (b);
\draw[red] (b) -- (c);
\draw[red,bend right=90]  (b) edge (c);
\draw[red,bend left]  (b) edge (c);
\draw (c) -- (d);
\draw[fill] (0,0) circle (2pt);
\draw[fill] (2,0) circle (2pt);
\draw[fill] (6,0) circle (2pt);
\draw[fill] (9,0) circle (2pt);
\end{tikzpicture}
\end{minipage}
\begin{minipage}{1.5cm}
\begin{tikzpicture}[scale=0.8]
\draw[-{Stealth[scale=0.5,angle'=60]},line width=2.5pt] (4,0) -- (5,0);
\end{tikzpicture}
\end{minipage}
\begin{minipage}{6cm}
\begin{tikzpicture}[scale=0.6]
\coordinate (a) at (0,0);
\coordinate (b) at (2,0);
\coordinate (c) at (7,0);
\coordinate (d) at (10,0);
\draw (a) -- (b);
\draw[bend left]  (a) edge (b);
\draw[bend right]  (a) edge (b);
\draw[red,bend left=40]  (b) edge (c);
\draw[red,bend right=40]  (b) edge (c);
\draw (c) -- (d);
\draw[fill] (0,0) circle (2pt);
\draw[fill] (2,0) circle (2pt);
\draw[fill] (7,0) circle (2pt);
\draw[fill] (10,0) circle (2pt);
\end{tikzpicture}
\end{minipage}
\caption{An example of symmetrisation: The graph on the right is obtained by replacing the 3-pumpkin in the pumpkin chain on the left by an equilateral 2-pumpkin.}
\end{figure}

The following theorem applies to the first non-trivial eigenvalue
$\mu(\Graph)$ of a connected graph $\Graph$ (see
Definition~\ref{def:mu}): in particular, we make no assumptions on the
vertex conditions except where explicitly stated.

\begin{theorem}
  \label{thm:transferring_vol}
  The following operations decrease the first non-trivial eigenvalue
  of a connected graph $\Graph$, i.e.\ 
  \begin{equation}
    \label{eq:nontrivial_decrease}
    \mu(\NewGraph) \leq \mu(\Graph).
  \end{equation}
  \begin{enumerate}
  \item \label{item:transplanting}
    (Transplantation) Suppose for the subgraph
    $\mathcal{C} \subset \Graph$ and
    $v_1,\ldots,v_k \in \mathcal{V}_N (\Graph)$ that there exists a
    $\mu (\Graph)$-eigenfunction $\psi$ such that
    \begin{equation}
      \label{eq:smaller_than_v}
      0 \leq \min_{x\in\mathcal{C}} \psi(x) 
      \leq \max_{x\in\mathcal{C}} \psi(x) \leq \min_{i=1,\ldots,k}\psi (v_i).
    \end{equation}
    If $\mathcal{C}$, together with further graphs
    $\GraphH_1,\ldots, \GraphH_k$, satisfies the requirements of
    Definition~\ref{def:transplantation}, then
    \eqref{eq:nontrivial_decrease} holds for the graph $\NewGraph$
    obtained by transplanting $\mathcal{C}$ to
    $\GraphH_1, \ldots, \GraphH_k$ at $v_1,\ldots, v_k$ in accordance
    with Definition~\ref{def:transplantation}. The inequality
    \eqref{eq:nontrivial_decrease} is strict if 
    $\min_{x\in\mathcal{C}}\psi(x) < \max_{i=1,\ldots,k} \psi(v_i)$.
  \item \label{item:unfolding_parallel} (Unfolding parallel edges)
    Inequality \eqref{eq:nontrivial_decrease} holds for the graph
    $\NewGraph$ obtained by unfolding edges $e_1,e_2,\ldots,e_k$,
    $k\geq 2$, in accordance with
    Definition~\ref{def:unfolding_edges}.  Equality in
    \eqref{eq:nontrivial_decrease} implies that \emph{either} every
    eigenfunction of $\mu(\Graph)$ is constant on
    $e_1 \cup \ldots \cup e_k$, \emph{or} $\Graph$ is a figure-8 with
    natural conditions and $\NewGraph$ is a loop.\footnote{Since a 
    figure-8 consists of two parallel edges running from the central 
    vertex to itself, unfolding these edges, that is, replacing them with a 
    single edge of the same length, produces a loop. By Corollary~\ref{cor:joining_points},
    $\lambda_2^N$ is unaltered by this operation.}
  \item \label{item:symm_parallel} (Symmetrising parallel edges)
    Inequality \eqref{eq:nontrivial_decrease} holds whenever $k$ edges
    $e_1,\ldots, e_k$ are symmetrised to $m$ parallel edges,
    $1\leq m\leq k$, \emph{if} there is an eigenfunction $\psi$ of
    $\mu(\Graph)$ which is monotonically increasing along each of the
    edges $e_1,\ldots,e_k$ from $v_1$ to $v_2$. In this case, equality
    in \eqref{eq:nontrivial_decrease} implies that either $\psi$ is
    constant on $e_1 \cup \ldots \cup e_k$, or else
    $\NewGraph=\Graph$.  The same
    conclusions hold if this principle is applied to several pumpkin
    subgraphs within $\Graph$ simultaneously, provided that the
    conditions are satisfied separately on each pumpkin.
  \item \label{item:unfolding_pendant} (Unfolding pendant edges)
    Inequality \eqref{eq:nontrivial_decrease} holds for the graph
    $\NewGraph$ obtained by unfolding pendant
    edges. Equality in \eqref{eq:nontrivial_decrease} 
    implies that either every eigenfunction of $\mu$ is constant on 
    the pendant edges, or $\NewGraph=\Graph$.
\end{enumerate}
\end{theorem}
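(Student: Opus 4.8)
The plan is to treat the four operations as successive specialisations of a single comparison principle: each produces a new graph $\NewGraph$ of the same total length, together with a natural way to transport test functions from $\NewGraph$ back to $\Graph$ (or vice versa) in a way that does not increase the Rayleigh quotient appearing in \eqref{eq:varchar-principal} or \eqref{eq:varchar}. The common strategy for the inequality \eqref{eq:nontrivial_decrease} is: take a $\mu(\Graph)$-eigenfunction $\psi$ on $\Graph$, build from it a competitor $\widetilde\psi \in H^1(\NewGraph)$ (respectively $H^1_0(\NewGraph;\mathcal V_D)$) satisfying the relevant orthogonality constraint, and verify that $a_{\NewGraph}(\widetilde\psi) \le a_{\Graph}(\psi) = \mu(\Graph)\,\|\psi\|_{L^2(\Graph)}^2$ while $\|\widetilde\psi\|_{L^2(\NewGraph)}^2 \ge \|\psi\|_{L^2(\Graph)}^2$ (or exactly equal). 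For natural conditions one must handle the constraint $\int_{\NewGraph}\widetilde\psi = 0$; this will be arranged either by construction (the transported function has the same integral) or by adding a constant and exploiting that adding a constant strictly decreases the Rayleigh quotient of a non-constant function, so the extra freedom only helps.

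I would organise the body of the proof around the four items in decreasing order of generality. For \emph{Transplantation}~(\ref{item:transplanting}): having cut $\Graph$ along suitable vertices into $\mathcal R \cup \mathcal C$ with $\mathcal C$ carrying only natural conditions, the eigenfunction $\psi$ restricted to $\mathcal C$ has range inside $[\min_{\mathcal C}\psi,\max_{\mathcal C}\psi] \subseteq [0,\min_i\psi(v_i)]$ by \eqref{eq:smaller_than_v}. On each inserted graph $\GraphH_i$ I place the constant function equal to $\psi(v_i)$; the key point is that replacing $\psi|_{\mathcal C}$ by these constants preserves continuity at the insertion vertices $v_1,\dots,v_k$ (since $\psi(v_i)$ is the attachment value), contributes \emph{zero} Dirichlet energy on the $\GraphH_i$, and \emph{increases} the $L^2$-mass because $|\psi(v_i)| \ge |\psi(x)|$ for $x \in \mathcal C$ (using $0 \le \psi$ on $\mathcal C$). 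Thus $a_{\NewGraph}(\widetilde\psi) \le a_{\Graph}(\psi)$ with $\|\widetilde\psi\|^2 \ge \|\psi\|^2$, and after the constant-shift to restore orthogonality we get \eqref{eq:nontrivial_decrease}. Strictness when $\min_{\mathcal C}\psi < \max_i \psi(v_i)$ follows because then the $L^2$-mass gain is strict (there is a set of positive measure where $|\psi|$ is strictly below the replacement constant), or because $\widetilde\psi$ cannot be an eigenfunction of $\NewGraph$ — this needs the unique continuation / nodal argument, that an eigenfunction constant on an edge-subgraph attached only at natural vertices would force its normal derivatives to vanish and hence (by the eigenvalue equation and Kirchhoff) contradict non-vanishing on $\mathcal R$.

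Items~(\ref{item:unfolding_parallel})–(\ref{item:unfolding_pendant}) I would then deduce, largely, as special cases of the transplantation set-up combined with a rearrangement/symmetrisation observation on a bundle of parallel edges. For \emph{unfolding parallel edges} the cleanest route is: cut through $v_1$ and $v_2$ to free the bundle $e_1\cup\dots\cup e_k$ as a disjoint union $\mathcal C$ of intervals, note $\psi$ on each interval is an $H^1$ function with prescribed boundary values $\psi(v_1),\psi(v_2)$, and concatenate the $k$ intervals into one interval of length $\sum|e_i|$ carrying the concatenated function; this keeps Dirichlet energy and $L^2$-mass exactly the same, but one must re-attach consistently, which may require re-ordering so that the glued-back endpoints match $\psi(v_1)$ at one end and $\psi(v_2)$ at the other — the parity obstruction here (when $k$ is odd versus even, and when $v_1=v_2$) is exactly what produces the figure-8-to-loop exceptional case in the equality statement. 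Alternatively, and more robustly, I would argue that the single concatenated edge admits a test function with the same energy and mass; since the constraint $\int\widetilde\psi=0$ may fail after concatenation one adds a constant, strictly lowering the quotient unless $\psi$ was already constant on the bundle. For \emph{symmetrising} parallel edges under the monotonicity hypothesis, monotonicity of $\psi$ along each $e_j$ from $v_1$ to $v_2$ lets one invoke a Pólya–Szegő-type rearrangement: the $m$ equal-length edges each carry the "averaged profile", the Dirichlet energy does not increase by the standard convexity argument for monotone rearrangements, and $L^2$-mass is preserved; equality in the convexity inequality forces all profiles equal, i.e.\ either $\psi$ constant on the bundle or $m=k$ with $\NewGraph=\Graph$. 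For \emph{unfolding pendant edges} the argument is the same concatenation as for parallel edges, now with a free (natural) far endpoint, so there is no parity obstruction and the only equality case beyond $\NewGraph=\Graph$ is $\psi$ constant on the pendants.

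The main obstacle I anticipate is \emph{not} the energy inequality — which is elementary in each case — but the \textbf{characterisation of equality}, and in particular identifying precisely the figure-8/loop exception in~(\ref{item:unfolding_parallel}). Pinning this down requires combining three ingredients: (i) tracking exactly when each of the two inequalities ($a_{\NewGraph}(\widetilde\psi)\le a_{\Graph}(\psi)$ and the $L^2$-mass comparison, plus the constant-shift step) is an equality; (ii) concluding in the equality case that $\widetilde\psi$ is genuinely a $\mu(\NewGraph)$-eigenfunction and transporting this back to say that $\psi$ is, up to the canonical identification of Remark~\ref{rem:canonical-identification}, an eigenfunction of both graphs with specific behaviour on the bundle; and (iii) a unique-continuation-style argument on metric graphs — an eigenfunction that is constant on an edge has vanishing normal derivative there, which via the Kirchhoff condition and the second-order equation propagates constraints to neighbouring edges — to rule out "accidental" equality. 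The figure-8 case survives precisely because there the bundle is the whole graph minus nothing, $v_1=v_2$, and the non-constant eigenfunction (antisymmetric on the two loops) does get mapped to an eigenfunction of the loop by Corollary~\ref{cor:joining_points}; verifying that this is the \emph{only} such configuration is the delicate combinatorial-cum-spectral point. I would isolate this as a short lemma about eigenfunctions constant on a sub-bundle and then feed it into each equality statement.
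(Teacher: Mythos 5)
There are genuine gaps, even though your overall strategy for items (\ref{item:transplanting}) and (\ref{item:symm_parallel}) is the same as the paper's (transport the eigenfunction, compare Rayleigh quotients). First, your handling of the mean-zero constraint is backwards: subtracting the mean from the transported function $\varphi$ \emph{shrinks} the denominator, since $\|\varphi-\overline{\varphi}\|_{L^2}^2\le\|\varphi\|_{L^2}^2$, so knowing only $\|\varphi\|^2\ge\|\psi\|^2$ does not yield \eqref{eq:nontrivial_decrease}, and the claim that ``adding a constant strictly decreases the Rayleigh quotient, so the extra freedom only helps'' is false in the direction you need. What is actually required is a variance comparison: because \eqref{eq:smaller_than_v} forces $0\le\psi\le\varphi$ on the modified set (with $\varphi=\psi$ elsewhere) and $\int_\Graph\psi\,\textrm{d}x=0$, one has $\var(\varphi)\ge\int_\Graph|\psi|^2\,\textrm{d}x$; this is exactly Lemma~\ref{lem:variance_comparison}, and without it (or some substitute exploiting the sign condition) the transplantation argument does not close, nor does its strictness statement.

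Second, your construction for unfolding parallel edges fails in the main case: if $\psi(v_1)\neq\psi(v_2)$ and $k$ is even (already for $k=2$), concatenating the $k$ profiles cannot match both endpoint values, so no continuous test function results; the ``parity obstruction'' is an artifact of your construction, not the source of the figure-8 exception (the non-strict inequality holds for every $k\ge 2$), and your ``more robust alternative'' is not an argument. The paper instead flattens the sub-interval of $e_1$ on which $\psi$ climbs from $a=\psi(v_1)$ to $b=\psi(v_2)$ to the constant value $b$ and inserts each remaining edge, with its original profile, at the level point so created: this preserves continuity, removes energy and increases variance simultaneously. Likewise, unfolding pendant edges is not ``the same concatenation with a free endpoint'': the junction value $\psi(v_1)$ of the first pendant need not match either endpoint value of $\psi$ on the second, and the genuinely hard case --- a $\lambda_2^N$-eigenfunction changing sign on $e_1\cup e_2$ --- is absent from your sketch. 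There the paper cuts at $v_0$ along $\psi$ (Definition~\ref{def:cutting}), obtains an interval with a $\delta$-potential, shows by a Hadamard-formula argument (Lemma~\ref{lem:delta-shift}) that shifting a negative $\delta$-potential to the endpoint lowers $\lambda_2$, and reglues using the interlacing and its equality characterisation (Theorem~\ref{thm:changing_vc}, Remark~\ref{rem:catch_up}); the figure-8 equality case in (\ref{item:unfolding_parallel}) similarly rests on Theorem~\ref{thm:changing_vc} together with the strictness in Theorem~\ref{thm:increasing_vol}(\ref{item:attaching_a_pendant}), not on a unique-continuation lemma. Your item (\ref{item:symm_parallel}) is essentially the paper's symmetrisation, provided ``averaged profile'' means the equimeasurable rearrangement splitting each level set equally among the $m$ edges --- a literal average of the profiles would not preserve the $L^2$-norm.
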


\begin{remark}
  Part~(\ref{item:symm_parallel}) has already appeared in the
  literature in a less general form, most recently in
  \cite{BeKeKuMu_arx17}.  The other statements are, to the best of our
  knowledge, completely new.  It would be interesting to have a
  complete characterisation of equality throughout, although this may 
  be difficult as in some cases it seems to depend on the global 
  geometry of $\Graph$, cf.~Example~\ref{ex:star-power}.
\end{remark}

\begin{remark}
  \label{rem:more-general-transplantation}
  The condition for strict inequality in
  Theorem~\ref{thm:transferring_vol}(\ref{item:transplanting}) is
  satisfied, for example, if $\psi(x)$ has non-zero variation on
  $\mathcal{C}$, i.e. if $\min_{x\in\mathcal{C}}\psi(x) <
  \max_{x\in\mathcal{C}}\psi(x)$.  This condition can only fail if
  $\psi$ is identically 0 on $\mathcal{C}$ or if $\mu = 0$ and $\psi$
  is constant on $\mathcal{C}$.

  The conclusion of
  Theorem~\ref{thm:transferring_vol}(\ref{item:transplanting}) also
  holds, with only trivial modifications of the same proof, if all
  vertex conditions are natural (so that $\mu=\lambda_2^N$ and $\psi$
  changes sign), and we have two subgraphs
  $\mathcal{C}_1 \subset \{\psi\geq 0\}$ and
  $\mathcal{C}_2 \subset \{\psi\leq 0\}$, such that $\mathcal{C}_1$ is
  transplanted to vertices $v_1,\ldots,v_{k_1}$ with
  \begin{displaymath}
	0 \leq \min_{x\in\mathcal{C}_1} \psi(x) 
	\leq \max_{x\in\mathcal{C}_1} \psi(x) \leq \min_{i=1,\ldots,k_1}\psi (v_i)
  \end{displaymath}
  and $\mathcal{C}_2$ is transplanted to vertices $w_1,\ldots,w_{k_2}$ such that
  \begin{displaymath}
	0 \geq \max_{x\in\mathcal{C}_2} \psi(x) 
	\geq \min_{x\in\mathcal{C}_2} \psi(x) \geq \max_{i=1,\ldots,k_2}\psi (w_i).
  \end{displaymath}
  We will not use this, so we do not go into details.
\end{remark}

%%%%%%%%%%%%
\subsection{Effects of surgery: some examples and counterexamples}
\label{sec:limitations}

We now give some basic examples illustrating why some of the assumptions in 
the above theorems are necessary, and why more can be expected in some cases.

\begin{example}
\label{ex:unfolding-pumpkins}
  We start with an example to show that Theorem~\ref{thm:transferring_vol}(\ref{item:unfolding_parallel}) 
  (unfolding parallel edges) does not have to apply to the higher eigenvalues. Take $\Graph$ 
  to be an equal $3$-pumpkin each of whose edges $e_1,e_2,e_3$ has length $1$. 
  Then the first few eigenvalues of $\Graph$ with natural boundary conditions are 
  $0,\pi^2,\pi^2,\pi^2$. If we unfold $e_1$ and $e_2$, we produce a loop $\NewGraph$ 
  of length $3$, whose first eigenvalues are $0,\frac{4\pi^2}{9}, \frac{4\pi^2}{9}, 
  \frac{16\pi^2}{9}$. In particular, $\lambda_4^N(\NewGraph)>\lambda_4^N(\Graph)$.
\end{example}

\begin{example}
\label{ex:star-struck}
  Theorem~\ref{thm:transferring_vol}(\ref{item:unfolding_pendant}) (unfolding \emph{pendant} edges) 
  also does not apply to the higher eigenvalues. Indeed, by a theorem of Friedlander 
  \cite[Theorem~1]{Fri_aif05}, for $k\geq 3$, the unique minimiser of 
  $\lambda_k^N$ among all graphs of total length $L>0$ is the equilateral $k$-star 
  (cf.~Definition~\ref{def:examples-of-graphs}). In particular, unfolding any two 
  of its $k$ pendant edges strictly increases $\lambda_k^N$.
\end{example}

\begin{example}
\label{ex:unfolding-story}
  On the other hand, Theorem~\ref{thm:transferring_vol}(\ref{item:unfolding_parallel}) 
  holds for all eigenvalues if one instead unfolds an \emph{odd} number of 
  edges, i.e., replaces $2k+1$ parallel edges between two vertices $v_1,v_2 \in 
  \mathcal{V}_N$ by a single edge of the same total length. This is a simple 
  application of Theorem~\ref{thm:changing_vc}(\ref{item:gluing_vertices}). We do not go into details, 
  but refer to Figure~\ref{fig:simple-unfolding} to illustrate the principle.
\begin{figure}[H]
\begin{center}
\begin{minipage}{5.5cm}
\begin{tikzpicture}
\coordinate (a1) at (-1,0);
\coordinate (a2) at (-1,1);
\coordinate (a) at (0,0);
\coordinate (b) at (3,0);
\coordinate (b1) at (4,-1);
\coordinate (b2) at (4,0);
\coordinate (b3) at (4,1);
\draw[bend left=60]  (a) edge (b);
\draw[bend right=60]  (a) edge (b);
\draw (a) -- (a1);
\draw (a) -- (a2);
\draw (a) -- (b);
\draw (b) -- (b1);
\draw (b) -- (b2);
\draw (b) -- (b3);
\draw[fill] (a1) circle (2pt);
\draw[fill] (a2) circle (2pt);
\draw[fill=green] (a) circle (2pt);
\draw[fill=red] (b) circle (2pt);
\draw[fill] (b1) circle (2pt);
\draw[fill] (b2) circle (2pt);
\draw[fill] (b3) circle (2pt);
\node at (a) [anchor=north] {$v_1$};
\node at (b) [anchor=south] {$v_2$};
\node at (0.7,0.6) [anchor=south] {$e_1$};
\node at (1.5,-0.05) [anchor=south] {$e_2$};
\node at (0.7,-0.6) [anchor=north] {$e_3$};
\end{tikzpicture}
\end{minipage}
\begin{minipage}{1.5cm}
\begin{tikzpicture}
\draw[-{Stealth[scale=0.5,angle'=60]},line width=2.5pt] (4,0) -- (5,0);
\end{tikzpicture}
\end{minipage}
\begin{minipage}{3cm}
\begin{tikzpicture}
\coordinate (c1) at (5,0);
\coordinate (c2) at (5,1);
\coordinate (c) at (6,0);
\coordinate (d) at (6.4,0);
\coordinate (e) at (8.6,0);
\coordinate (f) at (9,0);
\coordinate (f1) at (10,1);
\coordinate (f2) at (10,0);
\coordinate (f3) at (10,-1);
\draw (c1) -- (c);
\draw (c2) -- (c);
\draw[bend left=60] (c) edge (e);
\draw (e) -- (d);
\draw[bend right=60] (d) edge (f);
\draw (f) -- (f1);
\draw (f) -- (f2);
\draw (f) -- (f3);
\draw[fill] (c1) circle (2pt);
\draw[fill] (c2) circle (2pt);
\draw[fill=green] (c) circle (2pt);
\draw[fill=green] (d) circle (2pt);
\draw[fill=red] (e) circle (2pt);
\draw[fill=red] (f) circle (2pt);
\draw[fill] (f1) circle (2pt);
\draw[fill] (f2) circle (2pt);
\draw[fill] (f3) circle (2pt);
\node at (c) [anchor=north] {$v_1$};
\node at (f) [anchor=south] {$v_2$};
\node at (6.7,0.5) [anchor=south] {$e_1$};
\node at (7.5,-0.05) [anchor=south] {$e_2$};
\node at (6.8,-0.45) [anchor=north] {$e_3$};
\end{tikzpicture}
\end{minipage}
\bigskip
\begin{minipage}{6.5cm}
\begin{tikzpicture}
\draw[-{Stealth[scale=0.5,angle'=60]},line width=2.5pt] (9,0) -- (10,0);
\coordinate (c1) at (11,0);
\coordinate (c2) at (11,1);
\coordinate (f1) at (17.5,-1);
\coordinate (f2) at (17.5,0);
\coordinate (f3) at (17.5,1);
\draw (12,0) -- (16.5,0);
\draw (12,0) -- (c1);
\draw (12,0) -- (c2);
\draw (16.5,0) -- (f1);
\draw (16.5,0) -- (f2);
\draw (16.5,0) -- (f3);
\draw[fill=green] (12,0) circle (2pt);
\draw[fill] (c1) circle (2pt);
\draw[fill] (c2) circle (2pt);
\draw[fill=red] (13.5,0) circle (2pt);
\draw[fill=green] (15,0) circle (2pt);
\draw[fill=red] (16.5,0) circle (2pt);
\draw[fill] (f1) circle (2pt);
\draw[fill] (f2) circle (2pt);
\draw[fill] (f3) circle (2pt);
\node at (12,0) [anchor=north] {$v_1$};
\node at (16.5,0) [anchor=south] {$v_2$};
\node at (12.75,0) [anchor=south] {$e_1$};
\node at (14.25,0) [anchor=south] {$e_2$};
\node at (15.75,0) [anchor=south] {$e_3$};
\end{tikzpicture}
\end{minipage}
\caption{The three edges between $v_1$ and $v_2$ can be unfolded by cutting 
through the vertices in the right way.}\label{fig:simple-unfolding}
\end{center}
\end{figure}
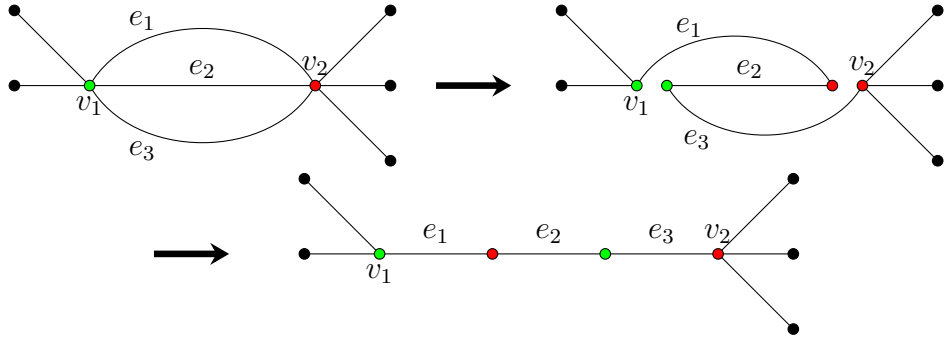
\end{example}

Likewise, Theorem~\ref{thm:transferring_vol}(\ref{item:unfolding_parallel}) holds 
for all $k\geq 1$ if the two edges $e_1$ and $e_2$ to be unfolded form a pendant 
(i.e., together they form a pendant loop); this also follows directly from 
Theorem~\ref{thm:changing_vc}(\ref{item:gluing_vertices}). Thus the general 
question of whether unfolding parallel edges decreases the higher
eigenvalues seems to be subtle.

\begin{example}
  \label{ex:loopy-pumpkin}
  We now show that the assumption of monotonicity of the eigenfunction
  in Theorem~\ref{thm:transferring_vol}(\ref{item:symm_parallel})
  (symmetrising parallel edges) cannot be dropped.  Choose
  $\varepsilon>0$ small and let $\Graph$ consist of two vertices $v_1$
  and $v_2$, joined by edges $e_1,\ldots,e_4$ of length
  $2-2\varepsilon$, $\varepsilon$, $\varepsilon$ and $1$, respectively
  ($\Graph$ is thus a $4$-pumpkin, of total length $3$).  Then, for
  $\varepsilon>0$ small enough, $\lambda_2^N(\Graph)$ is approximately
  equal to $4\pi^2/9$, the second eigenvalue of a
  figure-8 (and of a cycle) of total length $3$. If we apply
  Theorem~\ref{thm:transferring_vol}(\ref{item:symm_parallel}) to
  $e_1,e_2,e_3$ with $m=2$, we obtain a new graph $\NewGraph$
  consisting of two vertices and three parallel edges running between
  them, each of length one (an equilateral $3$-pumpkin), so that
  $\lambda_2^N (\NewGraph) = \pi^2$, the second 
  eigenvalue of a cycle of length $2$. Obviously, the configuration of
  $\Graph$ forces the eigenfunction $\psi$ to satisfy
  $\psi(v_1) \approx \psi(v_2)$; since $e_1$ has more than half the
  total length of $\Graph$, it is impossible for $\psi$ to be
  monotonic on it.
\end{example}

\begin{example}
\label{ex:star-power}
  Finally, we give an example to show that inequality in 
  Theorem~\ref{thm:transferring_vol}(\ref{item:unfolding_pendant}) 
  can be strict even if $\mu(\Graph)$ is simple and its eigenfunction 
  $\psi$ vanishes identically on the pendants. Take $\Graph$ to be a 
  $4$-star with edge lengths $1$, $1$, $1-\varepsilon$ and $1- 
  \varepsilon$ for some $\varepsilon>0$ small, connected at a central 
  vertex $v_0$. Then $\lambda_2^N (\Graph) = \pi^2/4$ with 
  eigenfunction $\psi$ supported on the two longer edges and vanishing 
  identically on the shorter ones, with $\psi(v_0)=0$. Now unfold the 
  shorter edges, so that $\NewGraph$ is a $3$-star with edge lengths 
  $1$, $1$ and $2-2\varepsilon>1$. Now since $\NewGraph$ can be 
  formed by attaching one of its edges of length $1$ as a pendant to a 
  path graph (interval) of length $3-2\varepsilon$, whose first non-trivial 
  eigenvalue is $\pi^2 /(3-2\varepsilon)^2 < \pi^2/4$, 
  Theorem~\ref{thm:increasing_vol}(\ref{item:attaching_a_pendant}) 
  implies $\lambda_2^N (\NewGraph) < \pi^2/4$.
\end{example}

%%%%%%%%%%%%%%%%%%%%%%%%%%%%%%%%%%%%%%%%%%%%%%%%%%%%%%%%%%%%%%%%%%%%%%%%%%%%%%%
\section{Proofs}
\label{sec:proofs}

%%%%%%%%%%%%
\subsection{Proof of Theorem~\ref{thm:changing_vc} and its
  corollaries}

For the proof of Theorem~\ref{thm:changing_vc}, we will need a sharp 
form of the Courant--Fischer minimax characterisation of the eigenvalues. 
If $H$ is a Hilbert space with inner product $(\,\cdot\,,\,\cdot\,)_H$, $a:D(a) 
\times D(a) \to \R$ a closed, symmetric, sesquilinear form bounded from below 
and defined on a dense and compactly embedded subspace $D(a) \subset H$,
then we have a sequence of associated eigenvalues $\lambda_1\leq 
\lambda_2 \leq \ldots$, with corresponding eigenvectors
$v_1,v_2,\ldots$, which satisfy $a(u,v_k) = \lambda_k(u,v_k)_H$ for
all $u\in D(a)$ and 
which can be chosen to form an orthonormal basis of $H$; and 
the eigenvalues can be characterised variationally as
\begin{align}
  \label{eq:minimax_full}
  \lambda_n &= \min_{\substack{X\subset D(a)\\ \dim(X)=n}} 
  \max_{0\neq u \in X} \frac{a(u,u)}{\|u\|_H^2} \\
  \label{eq:maximin_full}
            &= \max_{\substack{Y\subset D(a)\\ \dim(Y)=n-1}} 
	\min_{0\neq u \in Y^\perp} \frac{a(u,u)}{\|u\|_H^2}.
\end{align}
These formulae are well known, cf.\ \cite[Chapter~6]{CouHil_mmp53}, 
\cite[Section~I.6.10]{Kat_ptlo76},
\cite[Theorem~XIII.2]{ReeSim_mmmp78}. However, we need additionally
the following characterisation of equality,
Lemma~\ref{lem:min-max-equality}. While this is surely not new, it
does not seem to be in any of the standard references, including the
ones just cited; similar but not identical results are contained in
\cite[Theorem~2.4.3]{WeiSte}.  For completeness we include a proof.

\begin{lemma}
\label{lem:min-max-equality}
  With the notation just introduced,
  \begin{enumerate}
  \item \label{item:minimizer_contains_eigenvector} if $X$ realises
    the minimum in \eqref{eq:minimax_full} (corresp.\ if $Y$ achieves
    the maximum in \eqref{eq:maximin_full}), then $X$ (corresp.\ $Y$)
    contains an eigenvector of $\lambda_n$;
  \item \label{item:minimizer_contains_eigenspace}
    if $\lambda_n < \lambda_{n+1}$, then the intersection of all possible
    minimising $n$-dimensional subspaces $X$ in 
    \eqref{eq:minimax_full} is the eigenspace of $\lambda_n$.
  \end{enumerate}
\end{lemma}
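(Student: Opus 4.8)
The plan is to prove both parts by exploiting the min--max structure directly, testing on the spectral subspaces spanned by eigenvectors. For part~\eqref{item:minimizer_contains_eigenvector}, suppose $X$ realises the minimum in \eqref{eq:minimax_full}, so $\max_{0\neq u\in X} a(u,u)/\|u\|_H^2 = \lambda_n$. Let $E_n$ denote the span of $v_1,\dots,v_n$ and $E_n^\sharp$ the span of $v_n,v_{n+1},\dots$ (the spectral subspace for $[\lambda_n,\infty)$). First I would argue $X\cap E_n^\sharp \neq \{0\}$: this is a dimension count, since $X$ has dimension $n$ and $E_{n-1} := \aufspan\{v_1,\dots,v_{n-1}\}$ has codimension (in the relevant sense, i.e.\ $(E_{n-1})^\perp = E_n^\sharp$ has the complementary ``size'') so that $X$ cannot be contained in $E_{n-1}$ unless $\dim X < n$. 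More carefully: if $X \cap E_n^\sharp = \{0\}$ then the projection onto $E_{n-1}$ along $E_n^\sharp$ is injective on $X$, forcing $n = \dim X \le \dim E_{n-1} = n-1$, a contradiction. Now pick $0 \neq w \in X \cap E_n^\sharp$; writing $w = \sum_{k\ge n} c_k v_k$ we get $a(w,w) = \sum_{k\ge n}\lambda_k c_k^2 \ge \lambda_n \|w\|_H^2$, and since $w\in X$ the reverse inequality $a(w,w)\le \lambda_n\|w\|_H^2$ holds by the assumed maximality; hence $a(w,w) = \lambda_n\|w\|_H^2$, which forces $c_k = 0$ whenever $\lambda_k > \lambda_n$. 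Thus $w$ lies in the $\lambda_n$-eigenspace, proving~\eqref{item:minimizer_contains_eigenvector}. The maximin statement for $Y$ follows by the analogous argument (or by applying the minimax statement to $-a$, after a shift to make it bounded below), using $Y^\perp \cap E_n \neq \{0\}$.

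For part~\eqref{item:minimizer_contains_eigenspace}, assume $\lambda_n < \lambda_{n+1}$ and let $E$ be the (now finite-dimensional, with dimension equal to the multiplicity of $\lambda_n$) eigenspace of $\lambda_n$; say $\lambda_n = \lambda_{j+1} = \dots = \lambda_n$ with $\lambda_j < \lambda_n$, so $E = \aufspan\{v_{j+1},\dots,v_n\}$. One inclusion is that $E$ is contained in every minimising $X$: refining the argument above, if $X$ minimises then in fact $X \cap E_n^\sharp$ must equal $E$. Indeed $X\cap E_n^\sharp \neq\{0\}$ as shown, and any nonzero element there is a $\lambda_n$-eigenvector (same computation, now using $\lambda_{n+1}>\lambda_n$ to kill the tail), so $X\cap E_n^\sharp \subseteq E$; but $X \cap E_n^\sharp \subseteq X \cap E_n^\sharp$, and a dimension count gives $\dim(X\cap E_n^\sharp) \ge \dim X + \dim E_n^\sharp - \dim H$... this is the delicate point in infinite dimensions. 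The clean way: $E_n^\sharp$ has finite codimension $n-1$ in $H$ only if... no. Instead I would use that $X \subseteq D(a)$ and work with $E_n = \aufspan\{v_1,\dots,v_n\}$ directly: the orthogonal projection $P_n$ onto $E_n$ restricted to $X$ is injective (else $X$ meets $E_n^\perp = \aufspan\{v_{n+1},\dots\}$ nontrivially, giving a vector with Rayleigh quotient $\ge \lambda_{n+1} > \lambda_n$, contradicting $X$ minimising), hence $P_n(X) = E_n$ by dimension. Then for $w \in E$, write $w = P_n x$ for some $x\in X$; one shows $x = w$ by checking $x - w \perp E_n$ forces, via $a(x,x)\le\lambda_n\|x\|^2$ and $a$ nonnegative on the relevant piece, that $x-w$ both lies in $E_n^\perp$-spectral-tail and contributes Rayleigh quotient $\ge\lambda_{n+1}$, impossible unless $x = w$. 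So $E \subseteq X$ for every minimising $X$. For the reverse inclusion, I exhibit, for each unit vector $u \notin E$, a minimising $X$ with $u\notin X$: take $X = E \oplus \aufspan\{v_1,\dots,v_j\}$, which has dimension $n$, achieves $\lambda_n$ in \eqref{eq:minimax_full}, and does not contain any vector outside $E \oplus \aufspan\{v_1,\dots,v_j\} = E_n$ — but we need to exclude vectors in $E_n\setminus E$ too; for those, perturb by replacing some $v_i$ ($i\le j$) with a suitable combination $v_i + t v_m$ for large $m$ and generic $t$, or more simply argue that the intersection over the family $\{E \oplus F : F$ an $j$-dim subspace of $\aufspan\{v_1,\dots,v_j\}\oplus(\text{small tail})\}$ of minimising spaces is exactly $E$. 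This last juggling is the main obstacle.

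The genuine difficulty is thus pinning down part~\eqref{item:minimizer_contains_eigenspace}'s reverse inclusion cleanly: showing the intersection is \emph{no larger} than $E$. The strategy I favour is: given $0 \neq u \in D(a)$ with $u \notin E$, produce one specific minimising $n$-dimensional $X$ avoiding $u$. If $u$ has nonzero component along some $v_i$ with $i > n$, then $X = E_n = \aufspan\{v_1,\dots,v_n\}$ already works. If $u \in E_n$ but $u\notin E$, then $u$ has a nonzero component along some $v_i$ with $i \le j$; choose $X = E \oplus \aufspan\{v_1,\dots,\widehat{v_i},\dots,v_j, v_i + v_{n+1}\}$ — still $n$-dimensional, still realises $\lambda_n$ as its max Rayleigh quotient (the added vector $v_i + v_{n+1}$ has Rayleigh quotient strictly between $\lambda_i$ and $\lambda_{n+1}$, but we must check this does not exceed $\lambda_n$; it can, so instead use $v_i + \varepsilon v_{n+1}$ with $\varepsilon$ tiny, keeping the max at $\lambda_n$ contributed by $E$) — and one checks $u \notin X$ by comparing $v_i$-coefficients. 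Taking the intersection over enough such choices leaves precisely $E$. I would present this with $\varepsilon$-perturbations made explicit only to the extent needed, citing \cite[Theorem~2.4.3]{WeiSte} for the part that overlaps. Throughout, the one structural fact doing the real work is: \emph{a vector in $D(a)$ whose Rayleigh quotient equals $\lambda_n$ and which lies in the spectral subspace $E_n^\sharp$ for $[\lambda_n,\infty)$ must be a $\lambda_n$-eigenvector} — elementary from the spectral expansion, but the linchpin of both parts.
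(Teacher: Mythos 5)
Your proposal is correct and follows essentially the same route as the paper: part~(\ref{item:minimizer_contains_eigenvector}) via the dimension count plus spectral expansion forcing equality in the Rayleigh quotient, and the reverse inclusion in part~(\ref{item:minimizer_contains_eigenspace}) via exactly the paper's perturbed minimising subspaces of the form $\aufspan\{v_i+\varepsilon v_{n+1}\}\oplus(\text{remaining eigenvectors})$, with the same (true, easily verified) flagged check that such a subspace is still minimising for small $\varepsilon$. Your only deviation is the argument that the eigenspace lies in every minimiser --- injectivity of the projection onto $\aufspan\{v_1,\dots,v_n\}$ restricted to $X$ plus an orthogonal splitting of the form --- which is a harmless, equally valid variant of the paper's renumbering argument, so after discarding your abandoned infinite-dimensional ``codimension count'' detour the proof stands.
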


\begin{proof}
  (\ref{item:minimizer_contains_eigenvector}) We prove the statement
  only for $X$. A simple dimension count yields that if $\dim(X)=n$,
  then we can find a vector $u\in X$ of norm 1 which is orthogonal to
  the first $n-1$ eigenvectors $v_1,\ldots,v_{n-1}$. We expand $u$ in the
  eigenvector basis,
  \begin{equation}
    \label{eq:perp-form}
    u = \sum_{k=n}^\infty \alpha_k v_k,
  \end{equation}
  for some coefficients $\alpha_k=(u,v_k)_H \in\R$; the normalisation condition reads 
  $\sum_{k=n}^\infty \alpha_k^2=1$. Moreover,
\begin{equation}
\label{eq:perp-form-form}
	a(u,u) = \sum_{k=n}^\infty \alpha_k^2\lambda_k \leq \lambda_n,
\end{equation}
the inequality following from the minimality of $X$. This is only possible if 
$\alpha_k = 0$ whenever $\lambda_k> \lambda_n$. Thus $u$ is a linear 
combination of eigenvectors having eigenvalue equal to $\lambda_n$.

(\ref{item:minimizer_contains_eigenspace}) We first show that if $X$
is an arbitrary minimising subspace, then every eigenvector of
$\lambda_n$ is in $X$; after a re-numbering of all eigenvalues equal
to $\lambda_n$ if necessary, it suffices to prove $v_n \in X$.  As in
(\ref{item:minimizer_contains_eigenvector}), we obtain a vector $u \in X$ 
of norm one having the form
\eqref{eq:perp-form}, such that \eqref{eq:perp-form-form} also holds.
But since $\lambda_n < \lambda_{n+1}$, we must have $\alpha_k=0$ for
all $k\geq n+1$.  That is, $u=v_n$.

Finally, let
$\lambda_{n-m} < \lambda_{n-m+1} = \ldots = \lambda_n <
\lambda_{n+1}$, i.e., suppose $\lambda_n$ has multiplicity $m$.  We need 
to show that a vector $u$ that belongs to every minimising $X$ must
in fact be a superposition of $v_{n-m+1},\ldots,v_n$.  We expand $u$,
\begin{equation}
  \label{eq:expansion_other_u}
  u = \sum_{k=1}^\infty \alpha_k v_k.
\end{equation}
Taking $X:=\aufspan\{v_1,\ldots,v_n\}$, we conclude that
$\alpha_k = 0$ for all $k>n$.  Let now $n>m$ and suppose without loss
of generality that $\alpha_1 \neq 0$.  Then we take
\begin{displaymath}
  X:= \aufspan \{v_1+\varepsilon v_{n+1},v_2, v_3,\ldots,
  v_n \}.
\end{displaymath}
For small enough $\varepsilon$ this subspace is minimising for
\eqref{eq:minimax_full} and cannot contain $u$ since the expansion of 
the latter contains $v_1$ but does not contain $v_{n+1}$.  Similarly, 
$\alpha_k = 0$ for all $k \leq n-m$, which completes the proof.
\end{proof}

Theorem~\ref{thm:changing_vc} is true as a special case of a more
general theorem about rank-1 perturbations of quadratic forms.  Since
the vertex conditions can enter into a form in two different ways, we need
to consider perturbations of two different types.

\begin{definition}
  \label{def:form_rank_1}
  Let $X$ be a normed space and $Z$ a closed subspace.
  We say that $Z$ is a \emph{co-dimension 1 subspace} of $X$ and write $Z
  \subset_1 X$, if the quotient space $X/Z$ is 1-dimensional.

  Let $a$ and $\newa$ be closed semi-bounded Hermitian forms with
  domains $D(a)$ and $D(\newa)$.  Then we say that $\newa$ is a
  \emph{positive rank-1 perturbation} of the form $a$ if $\newa=a$ on some
    $Z \subset_1 D(a)$ and 
  \begin{itemize}
  \item either $Z = D(\newa) \subset_1 D(a)$, 
  \item or $D(\newa) = D(a)$ and $\newa \geq a$.
  \end{itemize}
  We shall call the former case perturbation of type (R) (for
  ``restriction'') and the latter perturbation of type (V) (for ``variation'').
\end{definition}

This definition is, in particular, applicable to the form given 
by \eqref{eq:dirichlet-form} for any of the vertex conditions we are 
considering; moreover, it is easy to see that the operations in 
Theorem~\ref{thm:changing_vc} satisfy the definition of rank-1 
perturbation of the forms, where as always we make the identification of 
Remark~\ref{rem:canonical-identification}.  Indeed, gluing two vertices 
introduces a single constraint and is therefore of type (R); changing the 
strength of the $\delta$-condition corresponds to variation of the form 
except in the case when $\gamma'=\infty$, in which case the perturbation 
is again of type (R).

\begin{theorem}[Interlacing with equality characterisation]
  \label{thm:rank1}
  Let $\newa$ be a positive rank-1 perturbation of the form
  $a$. Then the eigenvalues of the two forms satisfy the 
  interlacing inequalities
  \begin{equation}
    \label{eq:interlacing_forms}
    \lambda_{k}(a) \leq \lambda_{k}(\newa) 
    \leq \lambda_{k+1}(a) \leq \lambda_{k+1}(\newa),
    \qquad k \geq 1.
  \end{equation}
  If a given value $\Lambda$ has multiplicities $m$ and $\widetilde{m}$
  in the spectra of $a$ and $\newa$, respectively, then
  $|m-\widetilde{m}|\leq 1$ and the intersection of the 
  respective $\Lambda$-eigenspaces of the two forms has dimension 
  $\min(m, \widetilde{m})$.
\end{theorem}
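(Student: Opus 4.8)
The plan is to treat the two cases (R) and (V) using the two forms of the Courant--Fischer principle, \eqref{eq:minimax_full} and \eqref{eq:maximin_full}, together with the equality characterisation in Lemma~\ref{lem:min-max-equality}. For the interlacing inequalities \eqref{eq:interlacing_forms}, I would argue as follows. In case (R) we have $D(\newa) \subset_1 D(a)$ with $\newa = a$ on the smaller domain. Restricting the class of competitor subspaces of dimension $n$ in \eqref{eq:minimax_full} to those lying in $D(\newa)$ can only increase the minimum, giving $\lambda_n(a) \le \lambda_n(\newa)$; similarly, restricting the competitor subspaces $Y$ of dimension $n$ in \eqref{eq:maximin_full} gives $\lambda_{n+1}(\newa) \le \lambda_{n+2}(a)$ after re-indexing (an $(n+1)$-dimensional $Y$ in $D(\newa)$ lives in $D(a)$, but its orthogonal complement is taken in the ambient Hilbert space $H$, which is unchanged). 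To get the tighter bound $\lambda_n(\newa) \le \lambda_{n+1}(a)$, take a minimising $(n+1)$-dimensional subspace $X$ of $D(a)$ for $\lambda_{n+1}(a)$ and intersect it with the co-dimension-1 subspace $D(\newa)$: the intersection $X' := X \cap D(\newa)$ has dimension at least $n$, is a valid competitor for $\lambda_n(\newa)$, and the Rayleigh quotients on $X'$ agree with those of $a$ and are bounded by $\lambda_{n+1}(a)$. Case (V) is even simpler: $D(\newa) = D(a)$ and $\newa \ge a$ pointwise on the common domain, so every Rayleigh quotient only goes up, and \eqref{eq:minimax_full} immediately yields $\lambda_n(a) \le \lambda_n(\newa)$ for all $n$; the bound $\lambda_n(\newa) \le \lambda_{n+1}(a)$ here follows because $\newa - a$ has rank one, so on the $(n+1)$-dimensional minimising subspace $X$ for $\lambda_{n+1}(a)$ there is a codimension-1 subspace on which $\newa = a$, and one argues as before. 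Chaining these gives \eqref{eq:interlacing_forms}.

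For the eigenspace/multiplicity statement, suppose $\Lambda$ has multiplicity $m$ for $a$ and $\widetilde m$ for $\newa$. The bound $|m - \widetilde m| \le 1$ is an immediate consequence of \eqref{eq:interlacing_forms}: if $\lambda_k(a) = \ldots = \lambda_{k+m-1}(a) = \Lambda$ and $\lambda_{k-1}(a) < \Lambda < \lambda_{k+m}(a)$, then the interlacing forces $\lambda_k(\newa), \ldots, \lambda_{k+m-2}(\newa)$ all to equal $\Lambda$ and traps $\widetilde m$ between $m-1$ and $m+1$. The statement about the intersection of eigenspaces is the heart of the matter. Write $d := \min(m,\widetilde m)$; without loss of generality $d = m$ (the argument is symmetric once we observe that in case (R), $a$ is itself a rank-1 perturbation of $\newa$ of type, essentially, (V)-style extension, and in case (V), $a$ is a rank-1 perturbation of $\newa$ in the obvious way — so the roles of $a$ and $\newa$ can always be swapped). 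Let $E$ and $\widetilde E$ be the $\Lambda$-eigenspaces of $a$ and $\newa$ respectively. The inclusion $\dim(E \cap \widetilde E) \le d$ is trivial. For the reverse inequality: in case (R), every $\Lambda$-eigenvector of $\newa$ lies in $D(\newa) \subset D(a)$ and, since $\newa = a$ there, it satisfies $a(u,\varphi) = \Lambda(u,\varphi)_H$ for all $\varphi \in D(\newa)$; the point is to upgrade this to all $\varphi \in D(a)$, i.e.\ to show $\widetilde E \subset E$, which gives $\dim(E \cap \widetilde E) = \widetilde m \ge m = d$ when $m \le \widetilde m$. When instead $m < \widetilde m$ this inclusion is impossible for all of $\widetilde E$, and here one uses Lemma~\ref{lem:min-max-equality}(\ref{item:minimizer_contains_eigenspace}): a minimising subspace for $\lambda_n(\newa) = \Lambda$ of the right dimension, built from an $a$-minimising subspace intersected with $D(\newa)$, must contain the full $a$-eigenspace $E$, and $E \subset D(\newa)$ together with $a = \newa$ on $D(\newa)$ forces $E \subset \widetilde E$, giving $\dim(E \cap \widetilde E) \ge m = d$. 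Case (V) is handled analogously using that $\newa = a$ on a fixed codimension-1 subspace and that a $\Lambda$-eigenvector of one form lying in that subspace is automatically a $\Lambda$-eigenvector of the other.

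The main obstacle I anticipate is the bookkeeping in the eigenspace-intersection claim, specifically making the ``upgrade from $D(\newa)$ to $D(a)$'' argument watertight and handling cleanly the two sub-cases $m \le \widetilde m$ and $m > \widetilde m$ (equivalently, whether it is $E$ or $\widetilde E$ that is the larger space). The clean way to organise this is: first prove that whichever of $E, \widetilde E$ corresponds to the \emph{lower} multiplicity is contained in the other (using that its elements automatically satisfy the weak eigenvalue equation for the coarser form, since one form is the restriction/domination of the other and the two agree on a codimension-1 subspace containing enough of the eigenspace); then $\dim(E \cap \widetilde E)$ equals that lower multiplicity, which is exactly $\min(m, \widetilde m)$. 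Invoking Lemma~\ref{lem:min-max-equality} at the right moment — to identify the eigenspace as the intersection of all minimising subspaces, one of which we have explicitly constructed inside $D(\newa)$ — is what makes the containment go through without circularity.
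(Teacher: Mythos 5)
Your interlacing argument and the bound $|m-\widetilde m|\le 1$ are fine and essentially the paper's own (monotonicity for $\lambda_k(a)\le\lambda_k(\newa)$, and the dimension count on $X\cap Z$ for $\lambda_k(\newa)\le\lambda_{k+1}(a)$). The gap is in the eigenspace-intersection claim. First, the reduction ``without loss of generality $d=m$ by swapping the roles of $a$ and $\newa$'' is not available: $a$ is not a positive rank-1 perturbation of $\newa$ in the sense of Definition~\ref{def:form_rank_1}, so the two containments $E\subset\widetilde E$ and $\widetilde E\subset E$ must be argued separately. Second, and more seriously, the mechanism you propose for the case $m<\widetilde m$ does not work. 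You take an $a$-minimising subspace $X$, form $\newX:=X\cap D(\newa)$ (resp.\ $X\cap Z$), and assert that this $\newa$-minimising subspace ``must contain the full $a$-eigenspace $E$''. Lemma~\ref{lem:min-max-equality} gives no such thing: it places a form's \emph{own} eigenspace inside its minimising subspaces, and only under the hypothesis $\lambda_n<\lambda_{n+1}$. Your $\newX$ has dimension one less than $X$, which puts it at an index strictly inside the $\Lambda$-cluster of $\newa$ (interlacing forces $\widetilde m=m+1$ here), so that hypothesis fails at precisely the index you would need; and even where the lemma applies it would yield $\widetilde E\subset\newX$, not $E\subset\newX$. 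The assertion ``$E\subset D(\newa)$'', which you then use to transfer the weak eigenvalue equation, is exactly the non-trivial content of the theorem in this case (for gluing, it says every eigenfunction of the original graph already satisfies the new continuity constraint) and cannot be taken as given.

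The correct pairing of constructions to containments is the reverse of yours. When $m\le\widetilde m$ one works with \emph{arbitrary} minimising subspaces $\newX$ of $\newa$ at the top index $k+r$ of the cluster: since $\newX\subset D(a)$ and $a\le\newa$ on $D(\newa)$, the chain $\Lambda=\lambda_{k+r}(a)\le M(a,\newX)\le M(\newa,\newX)=\Lambda$ shows $\newX$ is also $a$-minimising, hence contains $E$ by Lemma~\ref{lem:min-max-equality}; intersecting over all such $\newX$ and applying the lemma to $\newa$ gives $E\subset\widetilde E$. When $\widetilde m\le m$ one uses your $X\cap Z$ construction, but to prove $\widetilde E\subset E$: a subspace $\newX\subset Z\cap X$ of dimension $k+r-1$ is $\newa$-minimising at the top of $\newa$'s cluster, hence contains $\widetilde E$, so $\widetilde E\subset X$ for every $a$-minimising $X$, and intersecting over all $X$ gives $\widetilde E\subset E$. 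Your ``upgrade to all test functions in $D(a)$'' for this direction is never supplied; it is exactly what this intersection-over-all-$X$ argument replaces. Verifying the strict inequalities needed to invoke the lemma at those particular indices is the bookkeeping the paper's four-case analysis carries out, and your sketch omits it at the one place where it genuinely fails.
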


We remark that, in particular, the common eigenfunction(s) must belong to
the domain $D(\newa)$ which may be smaller than $D(a)$.

\begin{proof}[Proof of Theorem~\ref{thm:rank1} and hence of
  Theorem~\ref{thm:changing_vc}]
  Introduce, for convenience, the notation
  \begin{equation}
    \label{eq:max_in_minimax}
    M(a, X) := \max_{0\neq u \in X} \frac{a(u,u)}{\|u\|^2}.
  \end{equation}

  The inequalities \eqref{eq:interlacing_forms} are standard; the
  inequality $\lambda_k (a) \leq \lambda_k (\newa)$ follows from
  minimising over a smaller set $D(\newa)$ in \eqref{eq:minimax_full}
  in the type (R) case or from inequality $a \leq \newa$ in the type
  (V) case.  The inequality $\lambda_k (\newa) \leq \lambda_{k+1}(a)$
  follows from the rank of the perturbation.  Indeed, let $X$ be any
  $(k+1)$-dimensional subspace of $D(a)$ such that
  $\lambda_{k+1}(a) = M(a,X)$.  Then the subspace $X \cap Z$ (with $Z$
  as in Definition~\ref{def:form_rank_1}) has dimension at least $k$.
  Choosing an arbitrary subset $\newX$ of $X \cap Z$ of dimension $k$,
  we have $\lambda_k(\newa) \leq M(\newa, \newX) = M(a,\newX) \leq
  M(a, X) = \lambda_{k+1}(a)$.

  The case of equality is more interesting.  The fact that the
  multiplicities $m$ and $\widetilde{m}$ differ by at most 1 is a simple
  consequence of the interlacing.  Let $\Lambda \in \R$ be an
  eigenvalue of $a$ and denote by $E(a, \Lambda) \subset H$
  the corresponding eigenspace.  With $r :=m-1$ and an appropriate $k$,
  we can write
  \begin{equation}
    \label{eq:equalities_of_a}
    \lambda_{k-1}(a) < \lambda_k(a) = \ldots =\Lambda= \ldots =
    \lambda_{k+r}(a) < \lambda_{k+r+1}(a).    
  \end{equation}
  We have to consider four cases of arrangements of eigenvalues of
  $\newa$ among \eqref{eq:equalities_of_a}.  We will deal with these
  arrangements two at a time.

  Suppose the equalities line up in one of the following two ways:
  \begin{align}
    \label{eq:equalities_case1}
    \ldots &< \lambda_k(a) = \lambda_k (\newa) 
    = \ldots 
    = \lambda_{k+r} (a) = \lambda_{k+r}(\newa) < \ldots \\
    \label{eq:equalities_case2}
    \ldots < \lambda_{k-1} (\newa) &= \lambda_k (a) = 
    \lambda_k (\newa) = \ldots = \lambda_{k+r} (a) = \lambda_{k+r} 
    (\newa) < \ldots
  \end{align}
  In these two cases $m \leq \widetilde{m}$ and we will show that
  $E(a, \Lambda) \subset E(\newa, \Lambda)$; the claim then follows from 
  a dimension count argument.  Let $\newX$ be any
  subspace of dimension $k+r$ such that $M(\newa, \newX) = \Lambda$.
  Since $\newX \subset D(a)$, we can also consider $a$ on $\newX$.  We
  have
  \begin{equation}
    \label{eq:Lambda_chain2}
    \Lambda = \lambda_{k+r}(a) \leq M(a, \newX) 
    \leq M(\newa, \newX) = \Lambda,
  \end{equation}
  and therefore $M(a, \newX) = \Lambda$.  Thus $\newX$ is a minimising
  subspace for $\lambda_{k+r}(a)$ and by
  Lemma~\ref{lem:min-max-equality}(\ref{item:minimizer_contains_eigenspace})
  we get $E(a, \Lambda) \subset \newX$.  Since the intersection of all
  such $\newX$ coincides with $E(\newa, \Lambda)$, we are done.

  Now suppose the equalities line up in one of the following ways:
  \begin{align}
    \label{eq:equalities_case3}
    \ldots < \lambda_{k-1} (\newa) 
    &= \lambda_k (a) = \lambda_k (\newa)
    = \ldots = \lambda_{k+r-1} (\newa) 
    = \lambda_{k+r} (a) < \ldots \\
    \label{eq:equalities_case4}
    \ldots 
    &< \lambda_k (a) = \lambda_k (\newa) 
    = \ldots = \lambda_{k+r-1}(\newa) 
    = \lambda_{k+r} (a) < \ldots
  \end{align}
  Here we need to show that
  $E(\newa, \Lambda) \subset E(a, \Lambda)$.  Consider any minimising
  subspace $X$ of dimension $k+r$ for $a$, i.e.\ $M(a,X) = \Lambda$.
  Let $\newX \subset Z \cap X$, where $Z$ is the subspace from
  Definition~\ref{def:form_rank_1} of co-dimension 1 and therefore
  $\newX$ can be chosen to have dimension $k+r-1$.  On $\newX$ the two
  forms agree, therefore we have
  \begin{equation}
    \label{eq:Lambda_chain3}
    \Lambda = \lambda_{k+r-1}(\newa) \leq M(\newa,\newX) = M(a,\newX)
    \leq M(a,X) = \Lambda.  
  \end{equation}
  Thus $M(\newa,\newX) = \Lambda$, $\newX$ is a minimising subspace
  for $\lambda_{k+r-1}(\newa)$ and by
  Lemma~\ref{lem:min-max-equality}(\ref{item:minimizer_contains_eigenspace})
  we conclude $E(\newa, \Lambda) \subset \newX \subset X$.  As above,
  we now take the intersection over all possible $X$ and use
  Lemma~\ref{lem:min-max-equality}(\ref{item:minimizer_contains_eigenspace})
  again to conclude $E(\newa, \Lambda) \subset E(a, \Lambda)$.
\end{proof}

To continue in the spirit of this section, we will establish
Corollary~\ref{cor:joining_points} by proving a slightly more
general claim. For the rest of this subsection, $a$ continues 
to be any closed semi-bounded Hermition form, with domain $D(a)$ in a 
normed space $X$.

\begin{definition}
  \label{def:invariant_vector}
  A vector $v \in D(a)$ is \emph{invariant with respect to
    perturbation of $a$ to $\newa$} if $v \in D(\newa)$ and
  $a(v,v) = \newa(v,v)$.  
\end{definition}

Note that if $\newa$ is a rank-1 perturbation of $a$ of type (R), the
condition $a(v,v) = \newa(v,v)$ is satisfied automatically and if it
is a rank-1 perturbation of type (V), the condition $v \in D(\newa)$
is satisfied automatically.

\begin{lemma}
  \label{cor:joining_points_general}
  Let $\newa$ be a positive rank-1 perturbation of the form
  $a$.  If the first $k$ eigenvectors of $a$ are invariant with
  respect to the perturbation, then
  \begin{equation}
    \lambda_i(a) = \lambda_i(\newa), \qquad 1\leq i \leq k.
  \end{equation}
\end{lemma}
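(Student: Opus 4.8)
The plan is to prove this by induction on $k$, at each step applying the interlacing inequalities from Theorem~\ref{thm:rank1} together with Lemma~\ref{lem:min-max-equality}(\ref{item:minimizer_contains_eigenvector}). The base case $k=0$ is vacuous. For the inductive step, suppose we have shown $\lambda_i(a) = \lambda_i(\newa)$ for $1 \leq i \leq k-1$ and that the first $k$ eigenvectors $v_1, \ldots, v_k$ of $a$ are invariant. We want $\lambda_k(a) = \lambda_k(\newa)$. Since $\newa$ is a positive rank-1 perturbation we already have $\lambda_k(a) \leq \lambda_k(\newa)$ from \eqref{eq:interlacing_forms}, so only the reverse inequality needs work.

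For the reverse inequality, I would use the test space $X := \aufspan\{v_1, \ldots, v_k\}$. Because $v_1, \ldots, v_k$ are invariant, each lies in $D(\newa)$, so $X \subset D(\newa)$ is an admissible $k$-dimensional subspace for the minimax formula \eqref{eq:minimax_full} applied to $\newa$. Moreover, since the $v_i$ are $a$-orthogonal eigenvectors and each satisfies $\newa(v_i,v_i) = a(v_i,v_i) = \lambda_i(a)$, one computes for a general $u = \sum_{i=1}^k c_i v_i \in X$ that $\newa(u,u) = \sum_{i,j} c_i c_j \newa(v_i,v_j)$; the diagonal terms are $\sum_i c_i^2 \lambda_i(a)$, and the off-diagonal terms need to be controlled. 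The cleanest way is to observe that invariance of the eigenvectors together with the polarization identity (applied to pairs $v_i \pm v_j$, each of which again lies in the invariant set — or rather, one shows the bilinear form agrees: $\newa(v_i,v_j) = a(v_i,v_j) = 0$ for $i \neq j$ using that $a(v_i,v_j)=\lambda_i(v_i,v_j)_H=0$). Hence $\newa(u,u) = \sum_{i=1}^k c_i^2 \lambda_i(a) \leq \lambda_k(a) \|u\|_H^2$, giving $M(\newa, X) \leq \lambda_k(a)$ and therefore $\lambda_k(\newa) \leq \lambda_k(a)$ by \eqref{eq:minimax_full}. Combined with the interlacing inequality in the other direction, this yields equality and completes the induction.

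The main obstacle — really the only subtle point — is justifying that $\newa(v_i, v_j) = a(v_i, v_j)$ for $i \neq j$, i.e., that invariance of vectors (which is stated only for the diagonal, $\newa(v,v) = a(v,v)$) propagates to the off-diagonal bilinear values on the invariant subspace. In the type (R) case this is immediate since $\newa$ is literally the restriction of $a$ to $D(\newa) \supset \aufspan\{v_i\}$. In the type (V) case, $D(\newa) = D(a)$ and $\newa - a$ is a positive rank-1 form, say $(\newa - a)(u,u) = |\ell(u)|^2$ for some linear functional $\ell$ on $D(a)$ (concretely, $\ell(u) = \sqrt{\gamma' - \gamma}\, u(v)$ in the setting of Theorem~\ref{thm:changing_vc}); then $\newa(v_i,v_i) = a(v_i,v_i)$ forces $\ell(v_i) = 0$ for each $i$, and by bilinearity $(\newa - a)(v_i,v_j) = \ell(v_i)\overline{\ell(v_j)} = 0$. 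So in both cases the off-diagonal terms vanish and the computation goes through; this is the step I would write out carefully, referencing the two cases of Definition~\ref{def:form_rank_1}. Everything else is a routine application of the already-established interlacing and minimax machinery.
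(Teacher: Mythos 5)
Your proof is correct and follows essentially the same route as the paper's: the lower bound $\lambda_i(a)\leq\lambda_i(\newa)$ comes from positivity of the perturbation (Theorem~\ref{thm:rank1}), and the upper bound from testing the minimax principle \eqref{eq:minimax_full} on $\aufspan\{v_1,\ldots,v_i\}$. The only difference is cosmetic: the paper asserts the equality of the two Rayleigh quotients on this span directly (no induction is needed, and indeed your inductive hypothesis is never used), whereas you additionally spell out why invariance of the diagonal values forces $\newa(v_i,v_j)=a(v_i,v_j)$ for $i\neq j$ in both the (R) and (V) cases --- a detail the paper leaves implicit.
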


\begin{proof}[Proof of Lemma~\ref{cor:joining_points_general} and
  hence of Corollary~\ref{cor:joining_points}]
  By positivity of the perturbation (see Theorem~\ref{thm:rank1}),
  \begin{displaymath}
    \lambda_i(a)\leq\lambda_i(\newa)
  \end{displaymath}
  for $i=1,\ldots,k$. Denote the eigenvectors of $a$ by
  $v_1,\ldots,v_k$ and form $X_i = \aufspan\{v_1,\ldots,v_i\}$ with
  $i\leq k$.  By assumption, $X_i$ is contained in the form domain of
  $\newa$ and 
  \begin{equation*}
    \lambda_i(\newa) 
    \leq \max_{0\neq u \in X_i} \frac{\newa(u,u)}{\|u\|_H^2}
    = \max_{0\neq u \in X_i} \frac{a(u,u)}{\|u\|_H^2}
    = \lambda_i(a).
  \end{equation*}
  Thus we have equality of the eigenvalues.
\end{proof}

\begin{remark}
  The rank of the perturbation plays no role in the proof, but we have
  not defined general positive perturbations of the form $a$ and will
  not require them.
\end{remark}

%%%%%%%%%%%%
\subsection{Proof of Theorem~\ref{thm:increasing_vol} and its
  corollaries}

\begin{proof}[Proof of Theorem~\ref{thm:increasing_vol}]
  (\ref{item:attaching_a_pendant}) Suppose that
  \begin{equation}
    \label{eq:pendant_eig}
    \lambda_{r}(\GraphH) \leq \lambda_{k}(\Graph).
  \end{equation}
  The spectrum of the union of the two graphs (considered as a single, disconnected 
  graph $\Graph \,\dot{\cup}\, \GraphH$) is the union of the two spectra.  Therefore,
  \begin{equation}
    \label{eq:spectral_shift}
    \lambda_{k}(\Graph) = \lambda_{n}(\Graph \,\dot{\cup}\, \GraphH), 
  \end{equation}
  for some $n=n(k,r) \geq k+r$. Now attaching $\GraphH$ to $\Graph$ to form 
  $\NewGraph$ is an operation covered by 
  Theorem~\ref{thm:changing_vc}(\ref{item:gluing_vertices})
  (gluing vertices) and we get
  \begin{equation}
    \label{eq:pendant_gluing_shift}
   \lambda_{n-1}(\NewGraph) 
   \leq \lambda_{n}(\Graph \,\dot{\cup}\, \GraphH). 
  \end{equation}
  Combining this with the estimate $n\geq k+r$, we thus obtain
  \begin{equation}
    \label{eq:extended_pendant_attach_result}
    \lambda_{k+r-1}(\NewGraph) \leq \lambda_k(\Graph).
  \end{equation}

  To obtain strict inequality under the additional conditions
  stipulated in the theorem, let $\lambda_{n}(\Graph \,\dot{\cup}\, \GraphH)$ be
  the first occurence of $\Lambda := \lambda_k(\Graph)$ in the spectrum of
  $\Graph \,\dot{\cup}\, \GraphH$, namely
  $\lambda_{n-1}(\Graph \,\dot{\cup}\, \GraphH) 
    < \lambda_{n}(\Graph \,\dot{\cup}\, \GraphH)
    = \Lambda$.
  From $\Lambda > \lambda_{k-1}(\Graph)$ and
  $\Lambda > \lambda_{r}(\GraphH)$ we still have $n \geq k+r$.
  Assume we do not have strict inequality in
  \eqref{eq:pendant_gluing_shift}, i.e.
  \begin{equation*}
    \lambda_{n-1}(\Graph \,\dot{\cup}\, \GraphH) 
    < \lambda_{n-1}(\NewGraph) = \lambda_{n}(\Graph \,\dot{\cup}\, \GraphH),
  \end{equation*}
  which fits the circumstances described in Remark~\ref{rem:catch_up}.
  But the eigenspace of $\lambda_{n}(\Graph \,\dot{\cup}\, \GraphH)$ contains a
  function which vanishes identically on $\GraphH$ and is non-zero at
  $v_0$; this function cannot be contained in the eigenspace of
  $\NewGraph$ because it does not belong to its form domain.
  Therefore
  $\lambda_{n-1}(\NewGraph) < \lambda_{n}(\Graph \,\dot{\cup}\, \GraphH) =
  \Lambda$
  and thus
  $\lambda_{k+r-1}(\NewGraph) \leq \lambda_{n-1}(\NewGraph) <
  \Lambda$.

  (\ref{item:inserting_a_graph}) Denote by $\widehat{\GraphH}$
  the graph obtained from $\GraphH$ by gluing the vertices
  $w_1,\ldots,w_m$ to form a single vertex $w^\ast$.  Attach
  $\widehat{\GraphH}$ to $\Graph$ by gluing $w^\ast$ and $v_0$,
  keeping the $\delta$-condition $\gamma_0$ at $v_0$.  Call the new
  graph $\widehat{\Graph}$.  Since all vertex conditions on
  $\widehat{\GraphH}$ are natural, $\lambda_1(\widehat{\GraphH}) = 0$
  and we can use part~(\ref{item:attaching_a_pendant}) of the theorem
  with $r=1$, as long as $\lambda_k (\Graph)\geq 0$.  We conclude that
  $\lambda_k(\widehat{\Graph}) \leq \lambda_k (\Graph)$ and then cut
  through the vertex $v_0$ of $\widehat{\Graph}$ to restore the
  vertices $w_1,\ldots,w_m$ and thus create the graph $\NewGraph$.  Since
  the $\delta$-conditions at $w_1,\ldots,w_m$ sum to $\gamma_0$, 
  by Theorem~\ref{thm:changing_vc}(\ref{item:gluing_vertices}) we have
  $\lambda_k (\NewGraph) \leq \lambda_k (\widehat{\Graph})$ for all
  $k\geq 1$. To obtain strict inequality we use the strict version of
  part~(\ref{item:attaching_a_pendant}).
\end{proof}

\begin{proof}[Proof of Corollary~\ref{cor:increasing_vol}]
  (\ref{item:lengthening_an_edge}) follows directly from
  Theorem~\ref{thm:increasing_vol}(\ref{item:inserting_a_graph}) by 
  picking an arbitrary internal point of $e$, making this a dummy vertex, 
  and inserting a one-edge graph $\GraphH$ at this 
  point in accordance with Definition~\ref{def:insert}.

  (\ref{item:adding_an_edge}) Form a new graph
  $\widehat{\Graph}$ from $\Graph$ by gluing the vertices $v$ and $w$
  (remembering to add the $\delta$-potentials if present); call the
  new vertex $v_0$. Then by Corollary~\ref{cor:joining_points},
  \begin{displaymath}
    \lambda_1 (\widehat{\Graph}) = \lambda_1 (\Graph), \ldots, 
    \lambda_n (\widehat{\Graph}) = \lambda_n (\Graph).
  \end{displaymath}
  Now insert a one-edge graph $\GraphH$ at $v_0$ separating the
  vertices $v$ and $w$ again.

  (\ref{item:adding_a_long_edge}) Denote by $\GraphH$ the graph
  consisting of the long edge, with natural endpoints.  Then the
  assumption on $\ell$ implies that
  $\lambda_2^N (\GraphH) \leq \lambda_{k_0} (\Graph)$. We now glue one
  endpoint of $\GraphH$ and $v$ to form a new graph $\widehat{\Graph}$
  and apply Theorem~\ref{thm:increasing_vol}(\ref{item:attaching_a_pendant}) 
  with $r=2$ to obtain
  \begin{displaymath}
	\lambda_{k+1} (\widehat{\Graph}) \leq \lambda_k (\Graph)
  \end{displaymath}
  for all $k \geq k_0$. We now glue $w$ and the other endpoint of $\GraphH$  to form 
  $\NewGraph$; then Theorem~\ref{thm:changing_vc}(\ref{item:gluing_vertices}) implies
  \begin{displaymath}
	\lambda_k (\NewGraph) \leq \lambda_{k+1} (\widehat{\Graph})
  \end{displaymath}
  for all $k$, whence the claim.

  (\ref{item:shrinking_redundant}) By
  Corollary~\ref{cor:joining_points}, we may assume that the edge $e$ on
  which $\psi$ vanishes is pendant (possibly a loop) by gluing
  its incident vertices if necessary. Now $\NewGraph$ is formed from
  $\Graph$ by removing the pendant graph consisting of $e$; hence
  $\lambda_2^N(\Graph) \leq \lambda_2^N(\NewGraph)$ by
  Theorem~\ref{thm:increasing_vol}(\ref{item:attaching_a_pendant}). 
  But it is easy to check
  directly that the restriction of $\psi$ to $\NewGraph$ satisfies the
  eigenvalue equation and the vertex conditions.  Therefore
  $\lambda_2^N(\Graph)>0$ is still an eigenvalue of the graph
  $\NewGraph$ and it must be the second one since the first one is zero.
\end{proof}

%%%%%%%%%%%%
\subsection{Proof of Theorem~\ref{thm:transferring_vol}}

The following lemma will be useful for the proof.

\begin{lemma}
  \label{lem:variance_comparison}
  Let $F$ and $G$ be two real-valued functions defined on a probability space $X$. 
  If $F$ has zero mean and for almost all $x\in X$ either 
  \begin{equation}
    \label{eq:G_outside}
    0 \leq F(x) \leq G(x)
    \qquad\mbox{or}\qquad
    G(x) \leq F(x) \leq 0,
  \end{equation}
  then
  \begin{equation}
    \label{eq:variance_comparison}
    \var(F) \leq \var(G).
  \end{equation}
  If $0 < |F(x)| < |G(x)|$ on a set of non-zero measure, then
  \eqref{eq:variance_comparison} is strict.
\end{lemma}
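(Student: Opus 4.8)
The plan is to exploit the variational characterisation of the variance, namely that for a real random variable $H$ on a probability space one has $\var(H) = \E[H^2] - (\E H)^2 = \inf_{c\in\R}\E[(H-c)^2] \leq \E[(H-\E F)^2]$. Since $F$ has zero mean by hypothesis, $\var(F) = \E[F^2]$, so it suffices to show $\E[F^2] \leq \var(G)$, and for this it is in turn enough to show $\E[F^2] \leq \E[(G-\E G)^2]$ is implied by the pointwise bound — but that is not quite how the dichotomy \eqref{eq:G_outside} is used. Instead, first I would observe that the two alternatives in \eqref{eq:G_outside} both force $F(x)$ and $G(x)$ to have the same sign (or $F(x)=0$) and $|F(x)| \leq |G(x)|$; squaring gives $F(x)^2 \leq G(x)^2$ pointwise a.e. Hence
\begin{equation}
  \label{eq:second-moment}
  \var(F) = \E[F^2] \leq \E[G^2].
\end{equation}

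This alone is not enough, because $\E[G^2]$ may exceed $\var(G)$ by $(\E G)^2$; the point is to recover the $-(\E G)^2$ term. The key additional step is to compare $F$ with the centred function $G - \E G$ rather than with $G$ itself, but the pointwise hypothesis is stated for $G$, not $G-\E G$. The right way around this is as follows: note that $\E G = \E[G - F] + \E F = \E[G-F]$, and under \eqref{eq:G_outside} we have $G(x) - F(x) \geq 0$ in the first case and $G(x)-F(x)\leq 0$ in the second; moreover in the first case $G-F$ and $F$ are both $\geq 0$ while in the second both $\leq 0$, so $F(x)(G(x)-F(x)) \geq 0$ a.e. Therefore $\E[F(G-F)] \geq 0$, i.e.\ $\E[FG] \geq \E[F^2] = \var(F)$. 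Combining with Cauchy--Schwarz, or more directly expanding $\E[(G-F)^2] \geq 0$ together with $\E[FG]\geq\E[F^2]$, one gets
\begin{equation}
  \label{eq:covariance-trick}
  \var(G) = \E[G^2] - (\E G)^2 \geq \E[F^2] + 2\E[F(G-F)] + \E[(G-F)^2] - (\E[G-F])^2,
\end{equation}
and since $\E F = 0$ the term $(\E[G-F])^2 = (\E G)^2 \leq \E[(G-F)^2]$ by Jensen, while $\E[F(G-F)]\geq 0$; dropping the nonnegative leftovers yields $\var(G) \geq \E[F^2] = \var(F)$. (The cleanest packaging is: $\var(G) = \var(F) + 2\,\mathrm{Cov}(F, G-F) + \var(G-F) \geq \var(F)$, using $\mathrm{Cov}(F,G-F) = \E[F(G-F)] - \E F\,\E[G-F] = \E[F(G-F)] \geq 0$.)

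For the strict inequality, suppose $0 < |F(x)| < |G(x)|$ on a set $A$ of positive measure. On $A$ the quantity $F(x)(G(x)-F(x))$ is strictly positive (same sign, and $|G-F| = |G|-|F| > 0$ there), so $\mathrm{Cov}(F, G-F) = \E[F(G-F)] > 0$ strictly, and the chain above gives $\var(G) > \var(F)$. The main obstacle I anticipate is purely bookkeeping: making sure the sign analysis of $F$, $G$, and $G-F$ is handled correctly and uniformly across both branches of the dichotomy \eqref{eq:G_outside}, and confirming all the relevant quantities are integrable (which follows if we assume, as is implicit, that $F,G\in L^2(X)$; otherwise $\var(G)=\infty$ and there is nothing to prove). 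Once the identity $\var(G) = \var(F) + 2\,\mathrm{Cov}(F,G-F) + \var(G-F)$ is in hand, everything reduces to the observation $\E[F(G-F)]\ge 0$, which is immediate from \eqref{eq:G_outside}.
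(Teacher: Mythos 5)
Your proposal is correct and uses essentially the same argument as the paper: the key observation that \eqref{eq:G_outside} forces $F(G-F)\geq 0$ a.e.\ (strictly positive where $0<|F|<|G|$), combined with the expansion $\var(G)=\var(F)+2\,\E\big[F(G-F)\big]+\var(G-F)$, which is exactly the paper's identity $\var(G)=\E F^2+2\,\E F(G-F)+\E(G-F-\E G)^2$ written in covariance form. The intermediate detour via $\E[F^2]\leq\E[G^2]$ and Jensen is redundant but harmless.
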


\begin{proof}
  The conditions on $F$ and $G$ can be rewritten as $F(G-F) \geq 0$ a.e. We now estimate
  \begin{align*}
    \var(G) &= \E(G - \E G)^2 
    = \E\big(F + (G - F - \E G) \big)^2 \\
    &= \E\big(F^2 + 2F(G-F) - 2F \E G + (G-F-\E G)^2 \big) \\
    &= \E F^2 + 2\E F(G-F) + \E(G-F-\E G)^2
    \geq \E F^2 = \var(F),
  \end{align*}
  where we used $\E F = 0$ to get to the last line. If $0<|F|<|G|$ on a 
  set of positive measure, then also $F(G-F)>0$ on the same set, implying that 
  the last inequality in the above calculation is strict.
\end{proof}

\begin{proof}[Proof of Theorem~\ref{thm:transferring_vol}]
  (\ref{item:transplanting}) Denote by $\psi$ the eigenfunction
  on $\Graph$ satisfying the assumptions of the theorem. Construct a
  test function $\varphi$ on $\NewGraph$ by setting
  \begin{displaymath}
    \varphi (x) = \begin{cases} \psi (x) \qquad &\text{if } x \in 
      \Graph \setminus \{v_1,\ldots,v_k\} = \NewGraph \setminus 
      \bigcup_{i=1}^k \GraphH_i ,\\
      \psi (v_i) &\text{if } x \in \GraphH_i, \,i=1,\ldots,k.\end{cases}
  \end{displaymath}
  Then $\varphi \in H^1 (\NewGraph)$ since it is continuous and piecewise-$H^1$; 
  moreover, it is still zero at any Dirichlet vertices, with
  \begin{equation}
    \label{eq:transplantation-integrals}
    \int_{\NewGraph} |\varphi|^2\,\textrm{d}x \geq \int_{\Graph} |\psi|^2\,\textrm{d}x,
    \qquad \int_{\NewGraph} |\varphi'|^2\,\textrm{d}x \leq
    \int_{\Graph} |\psi'|^2\,\textrm{d}x.
  \end{equation}
  In particular, if $\mu = \lambda_1$, we see $\varphi$ is a valid
  test function with smaller Rayleigh quotient than $\psi$; thus
  $\lambda_1 (\NewGraph) \leq \lambda_1 (\Graph)$. 

  If $\mu = \lambda_2^N$, then $\varphi$ will not in general be a valid test 
  function since $\int_{\NewGraph} \varphi \,\textrm{d}x \neq 0$. Assume 
  without loss of generality that $|\Graph|=|\NewGraph|=1$ (otherwise 
  just divide everything by the total length). Noting that $\int_{\Graph} 
  \psi \,\textrm{d}x = 0$, we apply Lemma~\ref{lem:variance_comparison} 
  to $F=\psi$, $G=\varphi$ and $X = L^2 (\Graph) \simeq L^2 (\NewGraph)$ 
  (here we identify $\mathcal{C}$ with $\bigcup_{i=1}^k \GraphH_i$, 
  and spaces of $L^2$-functions on them, arbitrarily, noting that the sets have the same measure). This yields
  \begin{equation}
    \label{eq:variance-application}
    \int_{\Graph} |\psi|^2\,\textrm{d}x \leq \int_{\NewGraph}
    \left[ \varphi - \int_{\NewGraph}\varphi\,\textrm{d}x\right]^2\,\textrm{d}x.
  \end{equation}
  In particular, if we set $\tilde{\varphi}:=\varphi-\int_{\NewGraph}\varphi
  \,\textrm{d}x$, then since also $\tilde{\varphi}'=\varphi'$, we see that 
  \eqref{eq:transplantation-integrals} holds with $\tilde{\varphi}$ in place 
  of $\varphi$. Since $\tilde{\varphi}$ is now a valid test function, as above 
  we conclude that $\lambda_2^N (\NewGraph) \leq \lambda_2^N (\Graph)$. 

  If, in addition to \eqref{eq:smaller_than_v} we have
  $\min_{x\in\mathcal{C}}\psi(x) < \max_{j=1,\ldots,k} \psi(v_j)$, then
  either $\min_{x\in\mathcal{C}}\psi(x) < \min_i \psi(v_i)$ or
  $\max_{x\in\mathcal{C}}\psi(x) < \max_i \psi(v_i)$.  From continuity
  of $\psi$ we conclude that there is a set of non-zero measure on
  which $\psi(x) < \varphi(x)$ and we obtain strict inequality in
  \eqref{eq:transplantation-integrals} or in 
  \eqref{eq:variance-application}, correspondingly (in the latter case
  through the strict version of Lemma~\ref{lem:variance_comparison}).

  (\ref{item:unfolding_parallel}) Let $\psi$ be an eigenfunction
  corresponding to $\mu(\Graph)$.  Denote $\psi(v_1)=a$, $\psi(v_2)=b$
  and assume without loss of generality that $0 \leq |a| \leq b$.  On
  the edge $e_1$ locate an interval $[x_1,x_2]$ such that $\psi$ goes
  from $a$ to $b$ on this interval, while remaining in the range
  $[a,b]$.  To be more specific, one can take
  \begin{displaymath}
  \begin{aligned}
    x_1 &:= \max \{x \in [0,|e_1|]: \psi(x) \leq a \}\\
    x_2 &:= \min \{x \in [x_1,|e_1|]: \psi(x) \geq b \}.
  \end{aligned}
  \end{displaymath}

  Change the value of the function on the interval $[x_1,x_2]$ to $b$,
  thus increasing its variance, see
  Lemma~\ref{lem:variance_comparison}, but creating a discontinuity at
  $x_1$.  Insert the edge $e_2$ at the point $x_1$, restoring the
  continuity.  This process creates a test function $\varphi_1$ with
  greater variance.  We can repeat this process, further absorbing the
  edges $e_3,\ldots, e_k$ and eventually obtaining the test function
  $\varphi := \varphi_{k-1}$ on $\NewGraph$.  As in the proof of
  part~(\ref{item:transplanting}), inequalities
  \eqref{eq:transplantation-integrals}-\eqref{eq:variance-application}
  are satisfied leading to non-increase of $\mu(\Graph)$.  
  
  It is easy to see that the variance is strictly increased by this
  process as long as $a\neq b$.  To prove strict inequality in the
  case when $\varphi(v_1)=\varphi(v_2)$ for every eigenfunction
  $\varphi$ associated with $\mu(\Graph)$, suppose that there is at
  least one eigenfunction, call it $\psi$, which is non-constant on
  $e_1\cup \ldots \cup e_k$.  By working recursively, we may assume 
  that $k=2$, and in fact, by Corollary~\ref{cor:joining_points} we 
  may also assume without loss of generality that $v_1 = v_2 =: v$, 
  and $e_1$ and $e_2$ are loops at $v$.

  Then $\NewGraph$ is formed from $\Graph$ by cutting through $v$ to
  create a longer loop $e_0$ out of $e_1$ and $e_2$,
  cf.~Figure~\ref{fig:unfolding-cut}.  By
  Theorem~\ref{thm:changing_vc}, the only way we can have
  $\mu(\NewGraph)=\mu(\Graph)$ is if the image of $\psi$ under this
  cut, which we will still call $\psi$, is also an eigenfunction for
  $\mu (\NewGraph)$ on $\NewGraph$; in particular, at the point $v'$
  on $e_0$ which is the image of $v$ under the cut, it takes the value
  $\psi(v') = \psi(v)$.

\begin{figure}[H]
\begin{minipage}[l]{3.5cm}
\begin{tikzpicture}[scale=0.6]
\foreach \x in {150,180,210}{
\draw[fill] (\x:0cm) -- (\x:3.1cm);
\draw[fill] (\x:3.2cm) -- (\x:3.3cm);
\draw[fill] (\x:3.4cm) -- (\x:3.5cm);
\draw[fill] (\x:3.6cm) -- (\x:3.7cm);
\draw[fill] (0:0cm) circle (2pt) node[below]{$v$};
}
\draw (0,0) .. controls (2,1) and (1,2) .. (0,0);
\draw (0,0) .. controls (1.5,-2.5) and (2.5,-1.5) .. (0,0);
\coordinate (a) at (0,0);
\draw[fill=red] (a) circle (2pt);
\node at (-3.5,0.7) [anchor=west] {$\Graph$};
\node at (0.2,1.5) [anchor=north] {$e_1$};
\node at (2,-2) [anchor=east] {$e_2$};
\end{tikzpicture}
\end{minipage}
\begin{minipage}[l]{1.5cm}
\begin{tikzpicture}[scale=0.8]
\draw[-{Stealth[scale=0.5,angle'=60]},line width=2.5pt] (4,0) -- (5,0);
\end{tikzpicture}
\end{minipage}
\begin{minipage}[l]{3.5cm}
\begin{tikzpicture}[scale=0.6]
\foreach \x in {150,180,210}{
\draw[fill] (\x:0cm) -- (\x:3.1cm);
\draw[fill] (\x:3.2cm) -- (\x:3.3cm);
\draw[fill] (\x:3.4cm) -- (\x:3.5cm);
\draw[fill] (\x:3.6cm) -- (\x:3.7cm);
\draw[fill] (0:0cm) circle (2pt) node[below]{$v$};
}
\draw[rounded corners=1ex] (0,0) .. controls (1,2) and (2,1) .. (.5,0) .. controls (2,-1) and (1,-2) .. (0,0);
\coordinate (a) at (0,0);
\draw[fill=red] (a) circle (2pt);
\node at (-3.5,0.7) [anchor=west] {$\NewGraph$};
\node at (1.3,1) [anchor=west] {$e_0$};
\draw[fill=red] (0.65,0) circle (2pt);
\node at (0.65,0) [anchor=west] {$v'$};
\end{tikzpicture}
\end{minipage}
\caption{The graph $\Graph$ in which $v_1=v_2=v$ (left) and the graph 
$\NewGraph$ obtained by cutting through $v$ (right).}
\label{fig:unfolding-cut}
\end{figure}

  Now since $\psi$ is non-constant on the loop $e_0$ and 
  it takes the same value at least twice on this loop, we 
  can conclude that $\mu \geq 4\pi^2 / |e_0|^2 = \lambda_2^N (e_0)$. 
  Moreover, as $\psi$ will change sign on $e_0$, it cannot
  correspond to the first eigenvalue and we may assume we are dealing
  with $\mu(\Graph) = \lambda_2^N(\Graph)$.
  
  However, $\NewGraph$ can be viewed as being formed by attaching a
  pendant to the loop $e_0$ at $v$. Since $\lambda_2^N (e_0)$ has an
  eigenfunction not vanishing at $v$,
  Theorem~\ref{thm:increasing_vol}(\ref{item:attaching_a_pendant})
  yields $\lambda_2^N(\NewGraph) < \lambda_2^N(e_0) \leq 
  \lambda_2^N (\Graph)$. The only exception is when the pendant is 
  empty, that is, $\NewGraph = e_0$.

  (\ref{item:symm_parallel}) The proof uses a symmetrisation
  argument, which is an easy variant of one that has appeared several
  times in the literature \cite{Fri_aif05},
  \cite[Theorem~2.1]{BanLev_ahp17},
  \cite[Theorem~3.4]{BeKeKuMu_arx17}.

  We denote by $\mathcal{P}_1,\ldots,\mathcal{P}_n$ the pumpkin
  subgraphs to be symmetrised, by $e_{1i},\ldots,e_{k_ii}$ the edges
  of $\mathcal{P}_i$ and by $\tilde{e}_{1i},\ldots, \tilde{e}_{m_ii}$
  the edges of the symmetrised pumpkin $\widetilde{\mathcal{P}}_i$,
  $i=1,\ldots,n$.  Suppose the two vertices of $\mathcal{P}_i$ are
  $v_i^-$ and $v_i^+$ (where $\psi(v_i^-) \leq \psi(v_i^+)$ and it is
  possible that $v_i^\pm = v_j^\pm$ for some $i\neq j$).

  We now construct a test function $\psi^\ast \in H^1 (\NewGraph)$ out of $\psi$ 
  by symmetrising $\psi$ on each pumpkin separately: we set $\psi^\ast (x) = \psi (x)$ 
  if $x \not\in \mathcal{P}_1 \cup \ldots \cup \mathcal{P}_n$ and, on $\mathcal{P}_i$, 
  if $\psi\geq 0$ on $\mathcal{P}_i$, we define $\psi^\ast (x)$ to be the continuous 
  function such that $\psi^\ast(v_i^-) = \psi (v_i^-)$ and
  \begin{displaymath}
    |\{x \in \tilde{e}_{ji}: \psi^\ast(x)<t\}| = \frac{1}{m_i}
    |\{x \in \mathcal{P}_i: \psi(x)<t \}|
  \end{displaymath}
  for $t\in \R$ and $j=1,\ldots,m_i$.

  The following facts are standard and may be easily checked (cf.\ also the 
  references given above): $\psi^\ast$ is in fact an $H^1$-function such 
  that, by construction, $\psi^\ast (v) = \psi (v)$ for all $v \in \mathcal{V} 
  (\NewGraph) \simeq \mathcal{V} (\Graph)$; in particular, if $\psi$ 
  satisfies a Dirichlet condition at some vertex, then so too does $\psi^\ast$; 
  $\|\psi^\ast\|_{L^2 (\NewGraph)} = \|\psi\|_{L^2 (\Graph)}$,
  \begin{displaymath}
	\int_{\NewGraph} \psi^\ast\,\textrm{d}x = \int_\Graph \psi\,\textrm{d}x,
  \end{displaymath}
  and $\|(\psi^\ast)'\|_{L^2 (\NewGraph)} \leq \|\psi'\|_{L^2 (\Graph)}$. Thus 
  $\psi^\ast$ is a valid test function for $\mu (\NewGraph)$ with a Rayleigh 
  quotient no larger than the one of $\psi$, and so we conclude $\mu (\NewGraph) 
  \leq \mu (\Graph)$.

  If there is at least one pumpkin $\mathcal{P}_i$ on which $\psi$ is not constant,
  then the inequality $\|(\psi^\ast)'\|_{L^2 (\NewGraph)} \leq \|\psi'\|_{L^2 
  (\Graph)}$ is strict unless $k_i = m_i$ and all edges $e_1, \ldots, e_{k_i}$
  have the same length (i.e., strict unless the symmetrisation is trivial).

  (\ref{item:unfolding_pendant}) Since unfolding pendant edges
  can be done recursively, it is enough to prove the statement in the
  case of two pendant edges $e_1$ and $e_2$.  Denote their attachment
  vertex by $v_0$, and their other vertices by $v_1$ and $v_2$, respectively, and
  suppose the edge to be created is $e_0$, $|e_0|=|e_1|+|e_2|$. If
  $\mu(\Graph)=\lambda_1 (\Graph)$ (or else $\lambda_2^N$ has an
  eigenfunction which does not change sign on $e_1 \cup e_2$), then
  this follows by transplanting an edge. More precisely, suppose there is an 
  eigenfunction $\psi$ which is non-negative on $e_1\cup e_2$, and suppose it
  reaches a maximum over $e_1\cup e_2$ at $x_0 \in e_1$. We then
  transplant $e_2$ to $x_0$. The resulting graph is $\NewGraph$, and
  by part (\ref{item:transplanting}), we have $\mu (\NewGraph) \leq \mu(\Graph)$. 
  Moreover, if $\psi$ is not constant on $e_1 \cup e_2$, then 
  obviously $0 \leq \min_{x \in e_2} \psi(x) < \psi(x_0)$, so 
  (\ref{item:transplanting}) yields strict inequality.

  Now suppose $\mu = \lambda_2^N (\Graph)$,
  $\Graph \supsetneq e_1 \cup e_2$, and $\psi$ is an eigenfunction
  which changes sign on $e_1 \cup e_2$.  First we observe the {\it a priori}
  bound 
  \begin{equation}
    \label{eq:apriori_bound}
    \lambda_2^N (\NewGraph) < \lambda_2^N (e_1 \cup e_2),
  \end{equation}
  which is obtained from
  Theorem~\ref{thm:increasing_vol}(\ref{item:attaching_a_pendant}) by
  attaching the pendant
  $\mathcal{R}:= \Graph \setminus (e_1 \cup e_2)$ to the interval
  $e_1 \cup e_2$ at $v_1$.

  First consider the case $\psi(v_0)=0$.  If $\psi \equiv 0 $ on
  $e_2$, then $\psi$ changes sign on $e_1$ and takes zero value at
  $v_0$, and we get
  $\mu > \lambda_2^N (e_1) > \lambda_2^N (e_1 \cup e_2)$. By
  \eqref{eq:apriori_bound}, we are done.  Similarly we treat the case
  of $\psi$ vanishing on $e_1$.  If $\psi(v_0)=0$, but $\psi$ is not
  identically zero on either $e_1$ or $e_2$, then we can create an 
  eigenfunction of $e_1 \cup e_2$ by possibly multiplying $\psi$ by a 
  constant on $e_1$ or $e_2$. Since this does not affect the eigenvalue, 
  we obtain $\mu \geq \lambda_2^N (e_1 \cup e_2)$ and we can again 
  invoke \eqref{eq:apriori_bound}.

  So suppose $\psi(v_0)\neq 0$. Our strategy of proof is as follows:
  we cut $\Graph$ at $v_0$ along $\psi$ to create a path graph
  (interval) out of $e_1 \cup e_2$ with a $\delta$-condition at $v_0$
  (cf.~Definition~\ref{def:cutting}). We either remove this
  $\delta$-condition or shift it to an endpoint of the path, which
  will lower the eigenvalue. We then re-glue the path to the rest of
  $\Graph$ to create $\NewGraph$.

  We denote by $\mathcal{I}_{v_0,\gamma}$ the (quantum) graph
  consisting of $e_1 \cup e_2$ with a $\delta$-potential of strength
  $\gamma$ at $v_0$ and natural conditions at $v_1$ and $v_2$.  We
  obtain the value of $\gamma$ from the eigenfunction $\psi$ as
  described in Definition~\ref{def:cutting}.  By
  $\mathcal{R}_{v_0,-\gamma}$ we denote the graph $\Graph$ with $e_1$
  and $e_2$ removed, and with a $\delta$-potential $-\gamma$ at $v_0$
  and natural conditions elsewhere; see Figure~\ref{fig:pendant-split}.
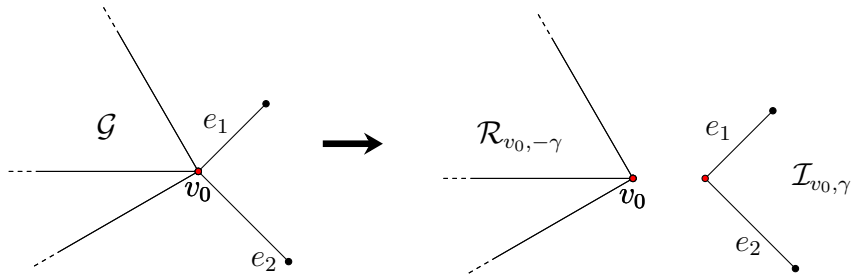
\begin{figure}[H]
\begin{minipage}[l]{4cm}
\begin{tikzpicture}[scale=0.6]
\foreach \x in {120,180,210}{
\draw[fill] (\x:0cm) -- (\x:3.6cm);
\draw[fill] (\x:3.7cm) -- (\x:3.8cm);
\draw[fill] (\x:3.9cm) -- (\x:4cm);
\draw[fill] (\x:4.1cm) -- (\x:4.2cm);
\draw[fill] (0:0cm) circle (2pt) node[below]{$v_0$};
}
\coordinate (a) at (0,0);
\coordinate (b) at (1.5,1.5);
\coordinate (c) at (2,-2);
\draw (a) -- (b);
\draw (a) -- (c);
\draw[fill=red] (a) circle (2pt);
\draw[fill] (b) circle (2pt);
\draw[fill] (c) circle (2pt);
\node at (-2.5,1.5) [anchor=north west] {$\mathcal G$};
\node at (0.4,1.5) [anchor=north] {$e_1$};
\node at (c) [anchor=east] {$e_2$};
\end{tikzpicture}
\end{minipage}
\begin{minipage}{1.5cm}
\begin{tikzpicture}[scale=0.8]
\draw[-{Stealth[scale=0.5,angle'=60]},line width=2.5pt] (4,0) -- (5,0);
\end{tikzpicture}
\end{minipage}
\begin{minipage}[l]{4cm}
\begin{tikzpicture}[scale=0.6]
\foreach \x in {120,180,210}{
\draw[fill] (\x:0cm) -- (\x:3.6cm);
\draw[fill] (\x:3.7cm) -- (\x:3.8cm);
\draw[fill] (\x:3.9cm) -- (\x:4cm);
\draw[fill] (\x:4.1cm) -- (\x:4.2cm);
\draw[fill] (0:0cm) circle (2pt) node[below]{$v_0$};
}
\draw (1.6,0) -- (3.1,1.5);
\draw (1.6,0) -- (3.6,-2);
\draw[fill=red] (0,0) circle (2pt);
\draw[fill] (3.1,1.5) circle (2pt);
\draw[fill] (3.6,-2) circle (2pt);
\draw[fill=red] (1.6,0) circle (2pt);
\node at (-3.7,1.5) [anchor=north west] {$\mathcal{R}_{v_0,-\gamma}$};
\node at (1.9,1.5) [anchor=north] {$e_1$};
\node at (3.1,-1.5) [anchor=east] {$e_2$};
\node at (3.3,0) [anchor=west] {$\mathcal{I}_{v_0,\gamma}$};
\end{tikzpicture}
\end{minipage}
\caption{The graph $\Graph$ (left), and the graphs $\mathcal{R}_{v_0,-\gamma}$ 
and $\mathcal{I}_{v_0,\gamma}$ created by cutting through $v_0$ (right).}
\label{fig:pendant-split}
\end{figure}  

  First suppose that $\gamma \geq 0$.  We have
  $\mu \geq \lambda_2\left(\mathcal{I}_{v_0,\gamma}\right)$ because
  $\psi$ is an eigenfunction for $\mathcal{I}_{v_0,\gamma}$ which 
  changes sign on $e_1 \cup e_2$.  But by
  Theorem~\eqref{eq:interlacing}, we have
  $\lambda_2\left(\mathcal{I}_{v_0,\gamma}\right) \geq 
  \lambda_2\left(\mathcal{I}_{v_0,0}\right)
  = \lambda_2^N(e_1 \cup e_2)$
  and we invoke \eqref{eq:apriori_bound} to complete this
  case.\footnote{One can in fact show that the case $\gamma \geq 0$ 
  is impossible under the assumption that $\psi$ is a second eigenfunction that
    changes sign on $e_1 \cup e_2$.}

  Finally consider the case $\gamma < 0$.  We will need the following
  auxiliary result, which we state and prove in greater generality than we
  require for the proof.  Denote by $\ell$ the length $|e_1\cup e_2|$
  of the interval graph $\mathcal{I}$.
\begin{lemma}
\label{lem:delta-shift}
With the above notation and with $\gamma<0$, the function
$x \mapsto \lambda_2 (\mathcal{I}_{x,\gamma})$ is strictly monotonically
increasing in $x \in [0,\ell/2]$. In particular, it reaches its unique
minimum at $x=0$.
\end{lemma}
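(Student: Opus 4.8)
The plan is to prove strict monotonicity first on the \emph{open} interval $(0,\ell/2)$, via a direct analysis of the secular equation, and then extend it to the closed interval using continuity of the eigenvalue in $x$. Normalise so that $\mathcal{I}$ is the interval $[0,\ell]$, with the $\delta$-vertex at $x$ and natural conditions at $0$ and $\ell$, and write $b:=\ell-x$.

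The first and, I expect, most delicate step is to pin down which branch of the secular equation carries $\lambda_2$. Since $\gamma<0$, the quadratic form of the pure Neumann Laplacian $\mathcal{I}_{x,0}$ (eigenvalues $0,\pi^2/\ell^2,4\pi^2/\ell^2,\dots$) is a positive rank-$1$ perturbation of that of $\mathcal{I}_{x,\gamma}$, so Theorem~\ref{thm:rank1} yields $0=\lambda_1(\mathcal{I}_{x,0})\le\lambda_2(\mathcal{I}_{x,\gamma})\le\lambda_2(\mathcal{I}_{x,0})=\pi^2/\ell^2$. A direct check (a hypothetical $0$-eigenfunction would be piecewise constant, hence constant, hence identically zero by the $\delta$-condition) gives $\lambda_2(\mathcal{I}_{x,\gamma})>0$, and the equality clause of Theorem~\ref{thm:rank1} gives $\lambda_2(\mathcal{I}_{x,\gamma})<\pi^2/\ell^2$ for $x<\ell/2$ (the second Neumann eigenfunction $\cos(\pi\,\cdot/\ell)$ would have to satisfy the $\delta$-condition at $x$, forcing $\cos(\pi x/\ell)=0$). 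Writing $\lambda_2(\mathcal{I}_{x,\gamma})=k^2$ with $0<k\ell<\pi$, the eigenfunction equals $A\cos(ks)$ on $[0,x]$ and $B\cos(k(\ell-s))$ on $[x,\ell]$; it does not vanish at $x$ (otherwise $A=B=0$), so $\lambda_2$ is simple, and with $c:=\psi(x)\neq 0$ one has $A=c/\cos(kx)$, $B=c/\cos(kb)$, the $\delta$-condition reducing to the secular equation $g(k,x):=k\bigl(\tan(kx)+\tan(kb)\bigr)=\gamma$. Since $x\le\ell/2$ we have $kx<\pi/2$, hence $\tan(kx)>0$, so $\gamma<0$ forces $\tan(kb)<0$, i.e.\ $kb\in(\pi/2,\pi)$, whence $\tan(kx)<-\tan(kb)$ and therefore $\tan^2(kx)<\tan^2(kb)$.

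With this in hand the monotonicity is a sign computation. On the branch $k\in(\pi/(2b),\pi/\ell)$ the map $k\mapsto g(k,x)$ runs from $-\infty$ up to $g(\pi/\ell,x)=0$ (using $\tan(\pi b/\ell)=-\tan(\pi x/\ell)$), and, by the estimate below, is strictly increasing there, so $k_*(x):=\sqrt{\lambda_2(\mathcal{I}_{x,\gamma})}$ is its unique zero of $g(\cdot,x)-\gamma$. One computes $\partial_x g=k^2\bigl(\tan^2(kx)-\tan^2(kb)\bigr)<0$ and
\[
\partial_k g=\bigl[\tan(kx)+kx\tan^2(kx)\bigr]+\bigl[\tan(kb)+kb\tan^2(kb)\bigr]+k\ell>0,
\]
the first bracket being positive, the second bounded below by $-1/(4kb)$ (minimum of $t\mapsto kb\,t^2+t$), and $k\ell>1/(4kb)$ since $4k^2b\ell>\pi^2\ell/b>\pi^2>1$ (using $k>\pi/(2b)$ and $\ell>b$). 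The implicit function theorem then gives $k_*'(x)=-\partial_xg/\partial_kg>0$, so $\lambda_2(\mathcal{I}_{x,\gamma})=k_*(x)^2$ is strictly increasing on $(0,\ell/2)$; continuity of the eigenvalues in $x$ extends this to strict monotonicity on $[0,\ell/2]$, and in particular the minimum is attained only at $x=0$.

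As a cross-check one may instead invoke the Hadamard formula: moving the $\delta$-vertex from $x$ to $x+\varepsilon$ is exactly the operation of lengthening $[0,x]$ and shortening $[x,\ell]$ by $\varepsilon$ while keeping the strength $\gamma$ at the moving vertex, so by \eqref{eq:simple-hadamard} (Remark~\ref{rem:hadamard}, which applies to $\delta$-type conditions) the derivative $\tfrac{d}{dx}\lambda_2(\mathcal{I}_{x,\gamma})$ has the same sign as $-\mathscr{E}_{[0,x]}+\mathscr{E}_{[x,\ell]}=k^2(B^2-A^2)=k^2c^2\bigl(\tan^2(kb)-\tan^2(kx)\bigr)>0$, the Pr\"ufer amplitudes being $\mathscr{E}_{[0,x]}=A^2k^2$ and $\mathscr{E}_{[x,\ell]}=B^2k^2$.
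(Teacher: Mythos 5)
Your argument is correct, but it takes a genuinely different route from the paper. The paper's proof is qualitative: it notes that $\lambda_2(\mathcal{I}_{x,\gamma})$ is simple with an eigenfunction having a single zero $z$ and no interior extremum, shows $z\in(x,\ell)$ by a reflection/comparison argument ($\lambda_2(\mathcal{I}_{x,\gamma})=\lambda_1^D(0,z)=\lambda_2^N(0,2z)\geq\lambda_2^N(0,\ell)$ would give a contradiction), and then reads off $(\psi_-)'(x)^2<(\psi_+)'(x)^2$ directly from the $\delta$-condition with $\gamma<0$, concluding via the Hadamard formula \eqref{eq:simple-hadamard} — i.e.\ it leans on Remark~\ref{rem:hadamard} applied to a graph with a $\delta$-vertex. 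You instead derive the secular equation $k\bigl(\tan(kx)+\tan(kb)\bigr)=\gamma$ explicitly, pin down the relevant branch $0<k\ell<\pi$, $kb\in(\pi/2,\pi)$ by the interlacing/equality statement of Theorem~\ref{thm:rank1}, and obtain monotonicity from the implicit function theorem after checking $\partial_x g<0$ and $\partial_k g>0$; your computations (including the lower bound for $\tan(kb)+kb\tan^2(kb)$ and the estimate $4k^2b\ell>1$) are correct. What your route buys is self-containedness: the main argument never uses the Hadamard formula for $\delta$-couplings (which the paper only proves in the cited references for natural/Dirichlet conditions and asserts more generally in Remark~\ref{rem:hadamard}); what it costs is that it is tied to this explicit one-dimensional configuration, whereas the paper's argument is in the same surgical spirit as the rest of Section~5. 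Two small points you should tidy: the parenthetical ``otherwise $A=B=0$'' when $\psi(x)=0$ needs the $\delta$-condition (continuity alone does not exclude $\cos(kb)=0$, so one argues $A=0$, then $\psi'(x^-)=0$, then the vertex condition forces $\psi'(x^+)=0$ and hence $\psi\equiv0$ on $[x,\ell]$); and the extension from $(0,\ell/2)$ to $[0,\ell/2]$ uses eigenvalue continuity as the edge $[0,x]$ degenerates at $x=0$, which deserves a citation (e.g.\ the references invoked in Propositions~\ref{prop:pumpkin-on-a-stick} and~\ref{prop:shifting-pumpkins}), though the paper's own proof implicitly relies on the same continuity.
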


\begin{proof}
Firstly, by standard Sturm--Liouville theory the eigenvalue is simple, and its 
eigenfunction $\psi$ has exactly one zero in $[0,\ell]$, say at $z$. 
Moreover, an argument using strict monotonicity with respect to domain inclusion 
and changes of $\gamma$ shows that $\psi$ cannot have an interior 
extremum. We will show using the Hadamard formula of 
Remark~\ref{rem:hadamard} that
\begin{equation}
\label{eq:delta-shift}
	\frac{d}{dx} \lambda_2 (\mathcal{I}_{x,\gamma}) > 0
\end{equation}
for all $x \in (0,\ell/2)$, which will prove the lemma.\footnote{Similar arguments, 
albeit with a different goal, will be used repeatedly in Section~\ref{sec:pumpkins}.} 
Now since increasing $x$ is equivalent to lengthening the edge $[0,x]$ and shortening 
$[x,\ell]$ by the same amount, invoking \eqref{eq:hadamard_formula} and 
\eqref{eq:simple-hadamard} and using the continuity of $\psi$ at $x$, we see 
\eqref{eq:delta-shift} is equivalent to
\begin{equation}
\label{eq:delta-derivative-comparison}
	(\psi_-)'(x)^2 < (\psi_+)'(x)^2,
\end{equation}
where $(\psi_\pm)'(x)$ denote the left ($-$) and right ($+$) derivatives of $\psi$ 
at $x$. We claim that the zero $z \in (x,\ell)$. Indeed, if not, then, since 
$\gamma<0$,
\begin{displaymath}
  \lambda_2 (\mathcal{I}_{x,\gamma}) = \lambda_1^D (0,z) = \lambda_2^N (0,2z) 
  \geq \lambda_2^N (0,\ell) > \lambda_2 (\mathcal{I}_{x,\gamma}).
\end{displaymath}
So suppose that $\psi(x)>0$; then clearly $(\psi_-)'(x), (\psi_+)'(x) < 0$. The 
vertex condition at $x$ together with $\gamma < 0$ now yields 
\eqref{eq:delta-derivative-comparison} and hence \eqref{eq:delta-shift}.
\end{proof}

  We now return to the proof of
  Theorem~\ref{thm:transferring_vol}(\ref{item:unfolding_pendant}). It
  follows using Lemma~\ref{lem:delta-shift} that
  \begin{displaymath}
    \mu = \lambda_2 (\mathcal{I}_{v_0,\gamma}) 
    > \lambda_2 (\mathcal{I}_{v_1,\gamma}),
  \end{displaymath}
  where in $\mathcal{I}_{v_1,\gamma}$ the $\delta$-potential has been
  shifted from $v_0$ to $v_1$, the degree one vertex of $e_1$. We now glue
  $\mathcal{I}_{v_1,\gamma}$ to $\mathcal{R}_{v_0,-\gamma}$ to create
  $\NewGraph$. By Theorem~\ref{thm:changing_vc}, we have
  \begin{equation}
    \label{eq:pendant-comparison}
    \lambda_2^N (\NewGraph) 
    \leq \lambda_3 (\mathcal{R}_{v_0,-\gamma} \,\dot{\cup}\, 
    \mathcal{I}_{v_1,\gamma}) \leq \mu = \lambda_2^N (\Graph),
  \end{equation}
  where the second inequality follows from $\mu > 
  \lambda_2 (\mathcal{I}_{v_1,\gamma})$ and 
  $\mu \in \spec\left(\mathcal{R}_{v_0,-\gamma}\right)$. This establishes 
  the desired inequality in this case. To show that it is actually strict, assume 
  the contrary: there is equality throughout \eqref{eq:pendant-comparison}. 
  Then we must also have 
  \begin{equation}
    \label{eq:pendant_contradiction}
    \lambda_2 (\mathcal{I}_{v_1,\gamma})
    = \lambda_2 \left( \mathcal{R}_{v_0,-\gamma} \,\dot{\cup}\, 
    \mathcal{I}_{v_1,\gamma}\right)
    < \lambda_2^N (\NewGraph) 
    = \lambda_3\left(\mathcal{R}_{v_0,-\gamma} \,\dot{\cup}\, 
    \mathcal{I}_{v_1,\gamma}\right) = \mu.
  \end{equation}
  This puts us in the circumstances of Remark~\ref{rem:catch_up} and
  we conclude that every eigenfunction of
  $\lambda_3\left(\mathcal{R}_{v_0,-\gamma} \,\dot{\cup}\,
    \mathcal{I}_{v_1,\gamma}\right)$
  is an eigenfunction of $\NewGraph$.  But the former eigenspace
  contains a function which is equal to $\psi$ on $\mathcal{R}$ and to
  zero on $\mathcal{I}$ and cannot be glued continuously since
  $\psi(v_0)\neq 0$.  This final contradiction concludes the proof.
\end{proof}

%%%%%%%%%%%%%%%
\section{Pumpkins everywhere}
\label{sec:pumpkins}

We are now ready to show how various surgery principles can be combined 
to allow a fine spectral analysis of one's graph. In this section, we will concentrate 
on the particular classes of \emph{pumpkins} and \emph{pumpkin chains} (see 
Definition~\ref{def:examples-of-graphs}), for three reasons: firstly, they are 
good examples on which to illustrate the principles; secondly, they will be 
used in a central way in our principal application in Section~\ref{sec:sizeof}; and 
thirdly (which also largely explains the first two), at least when
considering the first non-trivial eigenvalue $\mu(\Graph)$, these graphs are 
in a certain sense generic, as we shall now demonstrate using our surgery principles.

\subsection{Pumpkin chains}

We recall that a $[m_1,\ldots,m_n]$-pumpkin chain consists of vertices $v_1,\ldots, v_{n +1}$ 
together with, for each $k=1,\ldots,n$, some number $m_k$ of parallel edges running 
between $v_k$ and $v_{k+1}$. We will say that a (continuous) function $\psi$ defined 
on the given pumpkin chain is \emph{monotonically increasing along the chain} 
(or \emph{monotonically increasing} for short)  
if it satisfies $\psi(v_1) \leq \psi(v_2)\leq \ldots \leq \psi(v_{n+1})$, and 
on each edge connecting $v_k$ to $v_{k+1}$, $\psi$ is monotonically increasing 
from $v_k$ to $v_{k+1}$, $k=1,\ldots,n$. We call $\psi$ \emph{monotonically decreasing} (along 
the chain) if it satisfies the reverse inequalities, and \emph{monotonic} if it is monotonically 
increasing or decreasing.

We start out by showing that for any graph $\Graph$ there is a naturally associated 
pumpkin chain with the same first non-trivial eigenvalue.

\begin{lemma}
\label{lem:pumpkinchain}
Let $\Graph$ be compact and connected, and suppose $\mu(\Graph) \neq 0$. 
Then there is a pumpkin chain $\mathcal{P}_1$ with the same or smaller total length such that
  \begin{displaymath}
	\mu (\Graph) = \mu (\mathcal{P}_1);
  \end{displaymath}
moreover, $\mu (\mathcal{P}_1)$ has an eigenfunction which is monotonically 
increasing along the chain. Finally, if $\mathcal{P}_1$ is a path, then 
either it has shorter total length than $\Graph$, or $\Graph$ is a path itself, in which case they 
coincide.
\end{lemma}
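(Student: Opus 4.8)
The plan is to read a pumpkin chain off the level-set structure of an eigenfunction $\psi$ of $\mu(\Graph)$, collapsing each level set of $\psi$ to a single vertex by gluing, and using \emph{gluing of level points} (Corollary~\ref{cor:joining_points}) to guarantee that at every stage $\mu$ is unchanged and that $\psi$ descends to an eigenfunction. The only structural fact used about $\psi$ is that it solves $-\psi'' = \mu\psi$ on each edge: since $\mu\neq 0$, being constant on a subinterval of an edge forces $\psi$ to vanish identically on that whole edge, and in particular $\psi$ is real-analytic along each edge with only finitely many interior critical points, each a strict local extremum; moreover $\psi$ is globally non-constant (for $\mu = \lambda_2^N>0$ it is orthogonal to the constants, and for $\mu=\lambda_1\neq 0$ a constant eigenfunction would force $\mu=0$).

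First I would remove the edges on which $\psi$ vanishes identically. These occur only when $\mu = \lambda_2^N$ (for $\mu=\lambda_1$ the eigenfunction is strictly positive off $\mathcal{V}_D$), and may be removed one at a time via Corollary~\ref{cor:increasing_vol}(\ref{item:shrinking_redundant}), each removal strictly decreasing $|\Graph|$ while preserving $\lambda_2^N$ and the restriction of $\psi$. After this $\psi$ is non-constant on every edge, so inserting a dummy degree-two vertex (Remark~\ref{rem:dummy}) at each interior critical point makes $\psi$ strictly monotone along every edge (this also splits each loop into at least two parallel edges). Let $T=\{t_1<\dots<t_N\}$ be the finite set of values taken by $\psi$ at the current vertices; since $\min_\Graph\psi$ and $\max_\Graph\psi$ are attained at local extrema, hence at vertices, we get $t_1<t_N$ and $N\geq 2$. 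Inserting one further dummy vertex at the unique point of each edge where $\psi$ equals a value $t_i$ lying strictly between the values of that edge's endpoints, we arrange that every edge runs between a vertex of value $t_i$ and a vertex of value $t_{i+1}$ for some $i$. None of these operations changes the total length.

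Now form $\NewGraph$ by gluing together, for each $i=1,\dots,N$, all vertices of value $t_i$. Applying Corollary~\ref{cor:joining_points} iteratively (with $k=1$, $\psi_1=\psi$, if $\mu=\lambda_1$; with $k=2$, $\psi_1$ constant and $\psi_2=\psi$, if $\mu=\lambda_2^N$) yields $\mu(\NewGraph)=\mu(\Graph)$, with $\psi$ descending to a $\mu(\NewGraph)$-eigenfunction; in the Dirichlet/Robin case one notes that $\mathcal{V}_D\cup\mathcal{V}_R$ cannot become empty after gluing, since otherwise $\lambda_1(\NewGraph)=0\neq\mu(\Graph)$. The graph $\NewGraph$ has exactly the $N$ vertices $w_1,\dots,w_N$ of values $t_1,\dots,t_N$, every edge joins two consecutive ones, and connectedness of $\NewGraph$ forces at least one edge between each $w_i$ and $w_{i+1}$; hence $\NewGraph$ is a pumpkin chain, which we take as $\mathcal{P}_1$. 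Its total length is $|\Graph|$ minus the lengths of the zero-edges removed above, so $|\mathcal{P}_1|\leq|\Graph|$, and by construction the descended $\psi$ satisfies $\psi(w_1)<\dots<\psi(w_N)$ and is increasing along each edge from $w_i$ to $w_{i+1}$, i.e.\ it is monotonically increasing along the chain.

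For the final statement, suppose $\mathcal{P}_1$ is a path, so it has $N-1$ edges. If some zero-edge was removed in the reduction, then $|\mathcal{P}_1|<|\Graph|$. Otherwise, since gluing does not alter the edge set (Remark~\ref{rem:canonical-identification}), the subdivided graph (i.e.\ $\Graph$ with the dummy vertices inserted) has the same $N-1$ edges, while being connected on $V\geq N$ vertices; thus $N-1\geq V-1\geq N-1$, forcing $V=N$, so each value $t_i$ is realised by exactly one vertex, the gluing step does nothing, and $\mathcal{P}_1$ coincides with the subdivided graph. Suppressing the dummy vertices, $\Graph$ is then the path $\mathcal{P}_1$ itself. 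The step requiring genuine care is this bookkeeping around the two rounds of dummy-vertex insertion and the verification, via connectedness, that the glued graph is \emph{exactly} a pumpkin chain with every constituent pumpkin non-empty; everything else is a direct appeal to Corollary~\ref{cor:joining_points} together with elementary ODE and graph-counting facts.
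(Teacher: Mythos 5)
Your proof is correct and takes essentially the same route as the paper's: shrink the edges on which $\psi$ vanishes via Corollary~\ref{cor:increasing_vol}(\ref{item:shrinking_redundant}), insert dummy vertices so that every relevant level set of $\psi$ consists of vertices, glue each level set using Corollary~\ref{cor:joining_points}, and read off the pumpkin-chain structure and the monotonicity of the descended eigenfunction. Your explicit two-stage subdivision, the $\lambda_1$-versus-$\lambda_2^N$ bookkeeping after gluing, and the edge--vertex counting argument for the final ``path'' alternative are somewhat more detailed than the paper's brief remarks, but the substance is identical.
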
 

In fact, if there is an eigenfunction associated with $\mu (\Graph)$ which does not vanish 
identically on any edge of $\Graph$, then the $\mathcal{P}_1$ we construct has the same 
length as $\Graph$. For the proof of the lemma, we first introduce what we shall call 
\emph{critical values} or \emph{critical levels} of a function.

\begin{definition}
\label{def:critical}
Suppose $f \in C(\Graph)$. A point $x \in \Graph$ shall be called a \emph{critical 
point} (of the function $f$) if $x \in \mathcal{V} (\Graph)$ \emph{or} if $f$ attains 
a local maximum or minimum at $x$. A value $t \in \R$ shall be called a 
\emph{critical value} or \emph{critical level} (of $f$) if there exists a 
critical point $x$ of $f$ such that $f(x)=t$. The preimage $\{x \in \Graph: f(x)=t\}$ 
of a critical value $t$ will also be called the \emph{critical set} (of $f$ at the critical 
level $t$).
\end{definition}

Note that since $\Graph$ is compact any \emph{eigenfunction} has only
finitely many critical levels; and, as long as $\mu(\Graph)\neq 0$, 
apart possibly from $t=0$ every corresponding critical set is also finite.

\begin{proof}[Proof of Lemma~\ref{lem:pumpkinchain}]
  Fix any eigenfunction $\psi$ associated with $\mu := \mu (\Graph)$. We may assume 
  $\psi$ does not vanish identically on any edge of the remaining connected 
  component by shrinking the edge to zero if necessary, which decreases the total 
  length without affecting $\mu$ or $\psi$ by 
  Corollary~\ref{cor:increasing_vol}(\ref{item:shrinking_redundant}). Thus each 
  critical set of $\psi$ is finite; we already know that $\psi$ has finitely many critical 
  sets. By inserting dummy vertices as necessary we may assume that every point in 
  every critical set is a vertex. For each critical set we glue together all the vertices 
  belonging to it; Corollary~\ref{cor:joining_points} implies this leaves $\mu$ and 
  $\psi$ unchanged. The new graph $\NewGraph$ is, by construction, a 
  pumpkin chain, and $\psi$ is now (strictly) monotonic along the chain. Indeed, $\psi$ 
  does not take on the same value at any two distinct vertices, and if $v_1,v_2,v_3$ 
  are any vertices such that $\psi(v_1) < \psi(v_2) < \psi(v_3)$, then there is no edge 
  from $v_1$ to $v_3$, since otherwise $\psi$ would take on a value on that edge equal 
  to $\psi(v_2)$. It follows in particular that each vertex is connected at most to a 
  predecessor and a successor, that is, $\NewGraph$ is a pumpkin chain.

  Next observe that $\mu = \mu(\NewGraph)$, that is, $\mu$ is the smallest eigenvalue 
  of $\NewGraph$ having a non-trivial eigenfunction. In the case $\mu = \lambda_2^N 
  (\Graph)$, then $\lambda_2^N (\Graph) \leq \lambda_2^N (\NewGraph) = \mu 
  (\NewGraph)$ by Theorem~\ref{thm:changing_vc}(\ref{item:gluing_vertices}), but 
  $\psi$ is a non-constant eigenfunction of $\mu$ on $\NewGraph$, so there is equality. 
  The argument if $\mu = \lambda_1 (\Graph)$ and $\mu (\NewGraph) = \lambda_1 
  (\NewGraph)$ is similar. It is impossible for $\mu = \lambda_1 (\Graph)$ and 
  $\mu (\NewGraph) = \lambda_2^N (\NewGraph)$, since then $\psi$ would be a 
  non-constant, non-sign-changing eigenfunction of $\NewGraph$, a contradiction to the 
  theorem of Kre\u{\i}n--Rutman and $\lambda_1^N (\NewGraph)=0$ with only constants 
  as eigenfunctions.

  Finally, if $\Graph$ is not a path graph after the initial ``shrinking'' procedure, then 
  $\mathcal{P}_1$ is also not one, since the above procedure cannot decrease the 
  degree of any vertex; if $\psi$ did not vanish on any edge, then no edge has been 
  shrunk and thus $|\Graph|=|\mathcal{P}_1|$.
\end{proof}

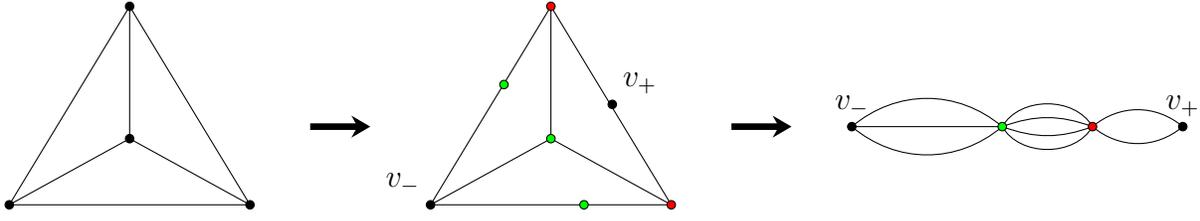
\begin{figure}[h]
\begin{tikzpicture}[scale=0.8]
\coordinate (v) at (-6,-1.3);
\coordinate (w) at (-2,-1.3);
\coordinate (y) at (-4,-.2);
\coordinate (z) at (-4,2);
\draw[fill] (v) circle (2pt);
\draw[fill] (w) circle (2pt);
\draw[fill] (y) circle (2pt);
\draw[fill] (z) circle (2pt);
\draw (v) -- (w);
\draw (w) -- (y);
\draw (y) -- (z);
\draw (z) -- (v);
\draw (v) -- (y);
\draw (w) -- (z);
\draw[-{Stealth[scale=0.5,angle'=60]},line width=2.5pt] (-1,0) -- (0,0);

\coordinate (a) at (1,-1.3);
\coordinate (b) at (5,-1.3);
\coordinate (c) at (3,-.2);
\coordinate (d) at (3,2);
\coordinate (e) at (4.02,.37);
\coordinate (e1) at (3.55,-1.3);
\coordinate (e2) at (2.22,.7);
\draw (a) -- (b);
\draw (b) -- (c);
\draw (c) -- (d);
\draw (a) -- (d);
\draw (a) -- (c);
\draw (b) -- (d);

\draw[fill] (a) circle (2pt);
\draw[fill=red] (b) circle (2pt);
\draw[fill=green] (c) circle (2pt);
\draw[fill=red] (d) circle (2pt);
\draw[fill] (e) circle (2pt);
\draw[fill=green] (e1) circle (2pt);
\draw[fill=green] (e2) circle (2pt);

\node at (a) [anchor=south east ] {$v_-$};
\node at (e) [anchor=south west] {$v_+$};

\draw[-{Stealth[scale=0.5,angle'=60]},line width=2.5pt] (6,0) -- (7,0);

\coordinate (g) at (8,0);
\coordinate (h) at (10.5,0);
\coordinate (i) at (12,0);
\coordinate (j) at (13.5,0);

\draw[bend left=60] (h) edge (i);
\draw[bend left=20] (h) edge (i);
\draw[bend left=-60] (h) edge (i);
\draw[bend left=-20] (h) edge (i);
\draw[bend left=40] (g) edge (h);
\draw (g) -- (h);
\draw[bend right=40] (g) edge (h);
\draw[bend right=40] (i) edge (j);
\draw[bend right=-40] (i) edge (j);

\node at (g) [anchor=south] {$v_-$};
\node at (j) [anchor=south] {$v_+$};

\draw[fill] (g) circle (2pt);
\draw[fill=green] (h) circle (2pt);
\draw[fill=red] (i) circle (2pt);
\draw[fill] (j) circle (2pt);
\end{tikzpicture}
\caption{Turning the quantum graph constructed upon the complete graph $K_4$ 
into a pumpkin chain. The new vertices appearing in the middle graph are those 
points where the eigenfunction attains the same values as in the central vertex 
together with the point $v_+$ where it attains its maximum.}\label{fig:creating-pumpkins}
\end{figure}

The monotonicity of the eigenfunction of $\mathcal{P}_1$ means 
we are now in a position to symmetrise each pumpkin using 
Theorem~\ref{thm:transferring_vol}(\ref{item:symm_parallel}) 
(or (\ref{item:unfolding_parallel})). Note the contrast to 
\cite[Lemma~5.4]{KeKuMaMu_ahp16}, which shows that $\lambda_2^N$ can be 
bounded from \emph{above} by the corresponding eigenvalue of a locally equilateral 
pumpkin chain with the same diameter (but generally smaller total length).

\begin{lemma}
\label{lem:locally-equilateral}
  Let $\Graph$ be compact and connected, 
  and suppose $\mu (\Graph) \neq 0$.
  Then there is a locally equilateral pumpkin chain $\mathcal{P}_2$ with 
  the same or smaller total length such that
  \begin{displaymath}
	\mu (\Graph) \geq \mu (\mathcal{P}_2).
  \end{displaymath}
\end{lemma}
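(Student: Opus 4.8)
The idea is to combine the two surgery results already available in this section: first reduce $\Graph$ to a pumpkin chain carrying a monotone first non-trivial eigenfunction, and then symmetrise each constituent pumpkin.

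First I would apply Lemma~\ref{lem:pumpkinchain} to $\Graph$. This produces a pumpkin chain $\mathcal{P}_1$ with $|\mathcal{P}_1| \leq |\Graph|$ and $\mu(\mathcal{P}_1) = \mu(\Graph)$, together with a $\mu(\mathcal{P}_1)$-eigenfunction $\psi$ which is monotonically increasing along the chain (in the sense defined just before Lemma~\ref{lem:pumpkinchain}). If $\mathcal{P}_1$ is already a path, it is trivially a locally equilateral pumpkin chain and we may take $\mathcal{P}_2 := \mathcal{P}_1$, so from now on assume at least one constituent pumpkin is non-trivial.

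Next I would symmetrise every constituent pumpkin of $\mathcal{P}_1$ at once. Writing $\mathcal{P}_1 = [m_1,\ldots,m_n]$ with constituent pumpkins $\mathcal{P}^{(1)},\ldots,\mathcal{P}^{(n)}$ joined at $v_1,\ldots,v_{n+1}$, let $\mathcal{P}_2$ be obtained from $\mathcal{P}_1$ by replacing each $\mathcal{P}^{(k)}$ by the equilateral $m_k$-pumpkin of the same total length (keeping the $\delta$-condition strengths at every $v_k$ unchanged). Then $\mathcal{P}_2$ is by construction a locally equilateral pumpkin chain with $|\mathcal{P}_2| = |\mathcal{P}_1| \leq |\Graph|$. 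The point to check is that the monotonicity hypothesis of Theorem~\ref{thm:transferring_vol}(\ref{item:symm_parallel}) holds separately on each pumpkin: since $\psi$ is monotonically increasing along the whole chain, on each of the $m_k$ parallel edges of $\mathcal{P}^{(k)}$ it is monotonically increasing from $v_k$ to $v_{k+1}$, which is exactly what is required. Applying Theorem~\ref{thm:transferring_vol}(\ref{item:symm_parallel}) to all the pumpkins simultaneously therefore gives $\mu(\mathcal{P}_2) \leq \mu(\mathcal{P}_1)$. Combining, $\mu(\Graph) = \mu(\mathcal{P}_1) \geq \mu(\mathcal{P}_2)$, which is the claim.

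The only real point requiring care is the verification that "monotone along the chain" delivered by Lemma~\ref{lem:pumpkinchain} is precisely the per-pumpkin monotonicity needed to invoke Theorem~\ref{thm:transferring_vol}(\ref{item:symm_parallel}), and that the simultaneous application of that part of the theorem is legitimate (which it is, since the hypotheses are imposed pumpkin by pumpkin and the preserved vertex data at each shared vertex $v_k$ is consistent). Beyond that, the argument is a direct concatenation of the two lemmas and involves no further estimates.
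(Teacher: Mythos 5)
Your proposal is correct and follows essentially the same route as the paper: invoke Lemma~\ref{lem:pumpkinchain} to reduce to a pumpkin chain with an eigenfunction monotone along the chain, then apply Theorem~\ref{thm:transferring_vol}(\ref{item:symm_parallel}) to all constituent pumpkins simultaneously (with the number of edges preserved, i.e.\ $m=k$ in each pumpkin). The per-pumpkin monotonicity check you highlight is exactly the point the paper relies on as well.
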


\begin{proof}
  By Lemma~\ref{lem:pumpkinchain} we may assume that $\Graph$ is already a 
  pumpkin chain and $\mu(\Graph)$ has an eigenfunction which is monotonic along 
  the chain. Now apply Theorem~\ref{thm:transferring_vol}(\ref{item:symm_parallel}) to all constituent 
  pumpkins simultaneously with $k=m$ in each case.
\end{proof}

Before proceeding, we wish to give some basic properties of locally equilateral pumpkin chains.
First, we look at a decomposition of the corresponding eigenspaces. 
Given a locally equilateral pumpkin chain $\mathcal P$ 
with terminal vertices $v_-,v_+$, we stipulate the following:
\begin{enumerate}
\item a function on $\mathcal P$ is called \emph{longitudinal} if it depends only on $\dist (\,\cdot\,,v_-)$;
\item a function on $\mathcal P$ is called \emph{transversal} if it is supported on exactly one pumpkin.
\end{enumerate}

\begin{lemma}
\label{lem:long-trans-decomp}
Let $\Graph$ be a locally equilateral pumpkin chain, and assume that no vertices apart 
possibly from the terminal vertices are equipped with a Dirichlet condition. Then
\begin{enumerate}
\item \label{item:long-trans-decomp} $L^2(\mathcal G)$ has an orthonormal basis 
consisting of longitudinal and transversal eigenfunctions, such that each transversal 
eigenfunction is supported on exactly one pair of parallel edges; and
\item \label{item:long-properties} there is an infinite sequence of eigenvalues having 
longitudinal eigenfunctions, and for each such eigenvalue the span of the longitudinal 
eigenfunctions in the corresponding eigenspace is one-dimensional.
\end{enumerate}
\end{lemma}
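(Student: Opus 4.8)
The plan is to exploit the edge-permutation symmetry of a locally equilateral pumpkin chain $\Graph$. Write $\mathcal{P}_1,\dots,\mathcal{P}_n$ for its constituent pumpkins, with $\mathcal{P}_k$ consisting of $m_k$ parallel edges $e_{1k},\dots,e_{m_kk}$ of common length $\ell_k$ joining $v_k$ to $v_{k+1}$. Identifying $\bigoplus_j L^2(e_{jk})$ with $L^2(0,\ell_k)\otimes\R^{m_k}$ and applying, pumpkin by pumpkin, the orthogonal splitting $\R^{m_k}=\R(1,\dots,1)\oplus\{c:\sum_j c_j=0\}$, one obtains an orthogonal decomposition $L^2(\Graph)=L^2_{\mathrm{long}}\oplus L^2_{\mathrm{trans}}$ with $L^2_{\mathrm{trans}}=\bigoplus_{k=1}^n L^2_{\mathrm{trans}}(\mathcal{P}_k)$, where $L^2_{\mathrm{long}}$ consists of the functions which on each pumpkin take the same value on all of its parallel edges, and $L^2_{\mathrm{trans}}(\mathcal{P}_k)$ of the functions supported on $\mathcal{P}_k$ whose restrictions to its $m_k$ parallel edges sum to zero pointwise.

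The crucial step is to check that this decomposition reduces the Laplacian, which I would verify at the level of the form $a$ from \eqref{eq:dirichlet-form}. Given $f$ in the form domain, let $f_{\mathrm{long}}$ be obtained by replacing $f$, on each pumpkin, by the average of its values over the parallel edges; since $f$ already takes a single value at each vertex, $f_{\mathrm{long}}$ is continuous there with the same value, so $f_{\mathrm{long}}$ and $f_{\mathrm{trans}}:=f-f_{\mathrm{long}}$ lie in the form domain. A continuous function in $L^2_{\mathrm{trans}}$ vanishes at every vertex, because at $v_k$ its common boundary value $c$ satisfies $m_kc=\sum_j f|_{e_{jk}}(0)=0$; hence the Robin boundary terms drop out of $a(f_{\mathrm{long}},f_{\mathrm{trans}})$, and the remaining integral vanishes pumpkin by pumpkin because $\sum_j f_{\mathrm{trans}}|_{e_{jk}}\equiv 0$ there, so that $a(f_{\mathrm{long}},f_{\mathrm{trans}})=0$; likewise $(f_{\mathrm{long}},f_{\mathrm{trans}})_{L^2}=0$. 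Thus $a$ is the orthogonal direct sum of its restrictions to $L^2_{\mathrm{long}}$ and to $L^2_{\mathrm{trans}}$, these subspaces reduce the self-adjoint operator, and $L^2_{\mathrm{trans}}$ splits orthogonally over the pumpkins (a transversal function vanishes at $v_k,v_{k+1}$, hence may be extended by zero). Choosing orthonormal bases of eigenfunctions in each summand proves~(1): on $L^2_{\mathrm{trans}}(\mathcal{P}_k)$ the operator acts as $-d^2/dx^2$ with Dirichlet endpoints on each edge, restricted to the zero-sum subspace, with eigenvalues $(p\pi/\ell_k)^2$, $p\ge1$, and eigenfunctions $\sin(p\pi\,\cdot/\ell_k)\otimes c$, $\sum_j c_j=0$; choosing the $c$'s supported on two coordinates yields transversal eigenfunctions each supported on a single pair of parallel edges.

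For~(2), note that $L^2_{\mathrm{long}}$ is isomorphic to $\bigoplus_{k=1}^n L^2(0,\ell_k)$ with the weighted inner product $\sum_k m_k\int_0^{\ell_k}$, and that the Laplacian restricted to it is a regular Sturm--Liouville operator on this chain of intervals: $-g_k''=\lambda g_k$ on $[0,\ell_k]$, with continuity $g_{k-1}(\ell_{k-1})=g_k(0)$ and the weighted current condition $m_{k-1}g_{k-1}'(\ell_{k-1})=m_kg_k'(0)$ (together with a $\delta$-term when present) at each interior vertex, and a Neumann, Dirichlet or Robin condition at $v_1$ and $v_{n+1}$. This is a self-adjoint operator with compact resolvent on an infinite-dimensional space, hence has an infinite unbounded sequence of eigenvalues, which are exactly the eigenvalues of $\Graph$ carrying a longitudinal eigenfunction. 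Finally, for a fixed $\lambda$, a longitudinal $\lambda$-eigenfunction is determined on $[0,\ell_1]$ up to a scalar by the single boundary condition at $v_1$, and the continuity and current conditions then propagate it uniquely across the remaining intervals; so the longitudinal $\lambda$-eigenfunctions span a subspace of dimension at most one, which is the remaining assertion of~(2).

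The step I expect to require the most care is the reduction: verifying that the form domain, with all its continuity and Robin/Dirichlet constraints, genuinely decomposes as the direct sum of its intersections with $L^2_{\mathrm{long}}$ and $L^2_{\mathrm{trans}}$, and that these two pieces are $a$-orthogonal. Once this is established, the remainder is standard one-dimensional Sturm--Liouville theory.
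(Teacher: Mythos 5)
Your argument is correct, but it reaches the decomposition by a different route than the paper. The paper works eigenfunction by eigenfunction: for an eigenfunction $\psi$ it forms the edge-average $\psi^{\textrm{lon}}$, checks directly that averaging preserves the eigenvalue equation and all vertex conditions (continuity, Kirchhoff/$\delta$, Dirichlet), observes that $\psi-\psi^{\textrm{lon}}$ vanishes at every vertex and hence lies in the span of transversal eigenfunctions supported on pairs of parallel edges, and finishes with a Gram--Schmidt step; for part (2) it simply cites the reduction to a one-dimensional Sturm--Liouville problem with piecewise constant weight and invokes standard Sturm--Liouville theory. You instead fix the orthogonal splitting $L^2=L^2_{\mathrm{long}}\oplus L^2_{\mathrm{trans}}$ coming from the permutation symmetry at the outset, prove at the level of the quadratic form that it reduces the operator (this form-domain bookkeeping is exactly the right thing to check, and you do it correctly: averaging preserves the form domain, continuous transversal functions vanish at every vertex, so both the $\delta$-terms and the cross Dirichlet integrals drop out), and then diagonalise the two blocks --- the transversal block explicitly as the Dirichlet Laplacian tensored with the zero-sum subspace, and the longitudinal block as a weighted Sturm--Liouville problem, for which you supply the compact-resolvent and shooting arguments rather than a citation. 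This is essentially the symmetry-reduction viewpoint that the paper only alludes to in the remark following the lemma, and it buys a self-contained proof of part (2) together with the explicit transversal spectrum $(p\pi/\ell_k)^2$, at the cost of a little more functional-analytic setup. One caveat you share with the paper: for a pumpkin with $m_k\geq 3$ parallel edges, an orthogonal basis of the zero-sum subspace cannot consist solely of vectors supported on two coordinates, so the ``supported on exactly one pair'' clause really pertains to a spanning (not orthonormal) family of transversal eigenfunctions; the paper's Gram--Schmidt step loses pair-support in exactly the same way, and nothing in the later applications depends on it.
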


\begin{proof}
(\ref{item:long-trans-decomp}) Suppose $\psi$ is any eigenfunction. Define its 
\emph{longitudinal part} $\psi^{\textrm{lon}}$  by averaging the value of $\psi$ over all 
parallel edges, i.e., if $\mathcal{P}_i$ is any constituent pumpkin of $\Graph$, which 
itself consists of the edges $e_j$, $j=1,\ldots,{k_i}$, then we set
\begin{displaymath}
	\psi^{\textrm{lon}}|_{e_j}(x) := \frac{1}{k_i}\sum_{e \in \mathcal{P}_i} \psi|_{e}(x),\quad x\in \mathcal P_i.
\end{displaymath}
Then $\psi^{\textrm{lon}}$, if it is non-zero, is still an eigenfunction with the same 
eigenvalue as $\psi$ since it still satisfies the eigenvalue equation pointwise, 
$\psi^{\textrm{lon}}(v)=\psi(v)$ at every vertex $v$ of $\Graph$, and all three vertex 
conditions, Dirichlet, natural and $\delta$, are preserved by the averaging process. 
Indeed, it follows immediately from the definition that at any vertex $v$, supposing 
that $\mathcal{P}_i$ is an incident pumpkin with $k_i$ edges, then
\begin{displaymath}
	\sum_{e \in \mathcal{P}_i} \partial_\nu \psi^{\textrm{lon}}|_e (v)
	= \sum_{e \in \mathcal{P}_i} \partial_\nu \left(\frac{1}{k_i}\sum_{e \in \mathcal{P}_i}\psi|_e \right) (v)
	= \frac{1}{k_i} \sum_{e \in \mathcal{P}_i} \sum_{e \in \mathcal{P}_i} \partial_\nu \psi|_e (v)
	= \sum_{e \in \mathcal{P}_i} \partial_\nu \psi|_e (v),
\end{displaymath}
and thus Kirchhoff and $\delta$ conditions remain satisfied in the strong sense. 
Moreover, by construction, $\psi^{\textrm{lon}}$ is longitudinal. Since the function 
$\psi - \psi^{\textrm{lon}}$ vanishes at all vertices of $\Graph$, it is within the span of
transversal eigenfunctions each of which is supported on just one pair of parallel edges. 
A Gram--Schmidt process completes the proof of (\ref{item:long-trans-decomp}).

(\ref{item:long-properties}) The existence of infinitely many such eigenvalues follows 
since the problem corresponds to a (one-dimensional) Sturm--Liouville problem with 
non-smooth but piecewise constant weight function, possibly with a finite number of 
$\delta$ potentials, cf.~\cite[Section~5.2]{KeKuMaMu_ahp16}. In particular, the 
simplicity of each eigenvalue within the space of longitudinal functions follows from 
basic Sturm--Liouville theory.
\end{proof}

Similar ideas have been developed, for example, in~\cite[Theorem~8.30
and \S~8.3.1]{Mug14} in a more general context that may lack a
longitudinal direction and therefore not allow for a one-dimensional
reduction. Longitudinal eigenfunctions correspond to the trivial representation 
of the symmetry of exchanging edges within each pumpkin, cf.~\cite{BaBeJoLi17}.

\begin{lemma}
\label{lem:simple-chain}
Let $\Graph$ be a locally equilateral pumpkin chain, such that 
no vertices other than the terminal ones may be equipped with Dirichlet conditions. 
Additionally, assume that $\Graph$ is not a pumpkin with all natural conditions. 
Then the first non-trivial eigenvalue $\mu(\Graph)$ is simple. 
The corresponding eigenfunction $\psi$ is the first non-constant 
longitudinal eigenfunction. In particular, if all vertices of $\Graph$ 
are equipped with natural conditions except possibly one of the terminal vertices, 
which may be equipped with an arbitrary $\delta$-potential $\gamma \in (-\infty, 
\infty]$, then $\psi$ may be chosen to be monotonically increasing along the chain.
\end{lemma}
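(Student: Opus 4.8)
The plan is to exploit the longitudinal--transversal decomposition of Lemma~\ref{lem:long-trans-decomp}, reducing the problem to a one-dimensional Sturm--Liouville one, after first checking that $\mu(\Graph)$ is never a transversal eigenvalue. Suppose to begin with that $\mathcal{V}_D\cup\mathcal{V}_R\ne\emptyset$, so that $\mu(\Graph)=\lambda_1(\Graph)$. By Kre\u{\i}n--Rutman theory this eigenvalue is simple, with an eigenfunction $\psi$ that is strictly positive off $\mathcal{V}_D$; averaging $\psi$ over the parallel edges of each constituent pumpkin, exactly as in the proof of Lemma~\ref{lem:long-trans-decomp}(\ref{item:long-trans-decomp}), produces a non-zero $\lambda_1$-eigenfunction $\psi^{\textrm{lon}}$, which by simplicity coincides with $\psi$; hence $\psi$ is longitudinal. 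It is not constant, since a non-zero constant neither belongs to $H^1_0(\Graph;\mathcal{V}_D)$ when $\mathcal{V}_D\ne\emptyset$ nor satisfies a $\delta$-condition of non-zero strength, so $\psi$ is the first non-constant longitudinal eigenfunction.

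Now suppose all vertex conditions are natural, so that $\mu(\Graph)=\lambda_2^N(\Graph)$ and, $\Graph$ not being a single pumpkin, it is a pumpkin chain with $n\ge2$ constituent pumpkins. A transversal eigenfunction from the basis of Lemma~\ref{lem:long-trans-decomp} is supported on a pair of parallel edges of a non-trivial constituent pumpkin and vanishes at both of their endpoints, so it restricts on each of those edges to a Dirichlet eigenfunction of an interval of length equal to the common edge length of that pumpkin; hence the transversal eigenvalues are precisely the numbers $(n\pi/|e|)^2$, $n\ge1$, as $e$ ranges over the edges of the non-trivial constituent pumpkins, the smallest being $(\pi/\ell^\ast)^2$ with $\ell^\ast$ the largest edge length occurring in a non-trivial constituent pumpkin. (If no constituent pumpkin is non-trivial then $\Graph$ is a path, $\lambda_2^N$ is simple by Sturm--Liouville theory with a monotonic eigenfunction, and there is nothing further to prove, so we may assume such a pumpkin $\mathcal{P}_{i_0}$, say with $k_{i_0}\ge2$ edges of length $\ell^\ast$, exists.) To show that $\lambda_2^N(\Graph)<(\pi/\ell^\ast)^2$ I would use the longitudinal test function $f$ that on $\mathcal{P}_{i_0}$ equals a half-cosine running from $+1$ to $-1$, and equals $+1$, resp.\ $-1$, on the constituent pumpkins preceding, resp.\ following, $\mathcal{P}_{i_0}$ along the chain; this $f$ lies in $H^1(\Graph)$, and after subtracting its mean it is admissible in the variational characterisation of $\lambda_2^N(\Graph)$. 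A direct computation then shows that its Rayleigh quotient is strictly less than $(\pi/\ell^\ast)^2$ precisely because $(A-B)^2<(A+B)\bigl(A+B+k_{i_0}\ell^\ast\bigr)$, where $A$ and $B$ are the total lengths, counted with multiplicity, of the parts of the chain lying before and after $\mathcal{P}_{i_0}$; here the strict inequality is guaranteed exactly by $A+B>0$, i.e.\ by the presence of at least one further constituent pumpkin.

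Consequently $\mu(\Graph)=\lambda_2^N(\Graph)<(\pi/\ell^\ast)^2$ lies strictly below every transversal eigenvalue, so by Lemma~\ref{lem:long-trans-decomp} the eigenspace of $\mu(\Graph)$ has no transversal component and reduces to the one-dimensional space of longitudinal eigenfunctions at that level; thus $\mu(\Graph)$ is simple and its eigenfunction $\psi$ is longitudinal, and, the only smaller longitudinal eigenvalue being $\lambda_1^N=0$ with constant eigenfunction, it is the first non-constant longitudinal eigenfunction. Together with the previous case this settles the first two assertions.

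For the ``in particular'' part, $\psi$ is longitudinal and so solves $-\psi''=\mu\psi$ on the interval obtained by collapsing the chain, with Neumann conditions at every interior vertex and at the natural terminal vertex; the only effect of the edge multiplicities is that $\psi'$ is rescaled by a strictly positive factor across each interior vertex, so its sign is preserved there. Wherever $\psi>0$ the relation $-\psi''=\mu\psi$ makes $\psi$ concave if $\mu>0$ and convex if $\mu<0$ (and conversely wherever $\psi<0$), and one checks that $\mu\ne0$ in this situation. If $\mu=\lambda_1(\Graph)$, then $\psi$ has constant sign, so starting from the natural terminal vertex $v_-$, where $\psi'(v_-)=0$, and propagating through the sign-preserving jumps forces $\psi'$ to have constant sign along the whole chain, whence $\psi$ is monotonic. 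If $\mu=\lambda_2^N(\Graph)>0$, then $\psi$ changes sign exactly once (standard Sturm--Liouville theory), and running the same argument on each of its two nodal domains, using the Neumann condition at the corresponding terminal vertex, gives the same conclusion; after possibly replacing $\psi$ by $-\psi$ it increases from $v_-$ to $v_+$. The step I expect to be the main obstacle is the strict test-function estimate above, in particular the verification that $A+B>0$ --- which is exactly the point at which the hypothesis that $\Graph$ is not a single pumpkin with natural conditions is used --- together with the bookkeeping of the derivative jumps in the monotonicity argument.
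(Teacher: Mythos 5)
Your proof is correct, and its overall skeleton coincides with the paper's: both arguments rest on the longitudinal--transversal decomposition of Lemma~\ref{lem:long-trans-decomp}, reduce the question to showing that $\mu(\Graph)$ lies strictly below the lowest transversal eigenvalue, and then obtain simplicity and monotonicity from the one-dimensional Sturm--Liouville reduction (your handling of the case $\mathcal{V}_D\cup\mathcal{V}_R\neq\emptyset$ via Kre\u{\i}n--Rutman plus edgewise averaging is the same as the paper's, just spelled out). The one step where you genuinely diverge is the crucial strict bound: the paper proves $\lambda_2^N(\Graph)<\pi^2/|e_{\max}|^2$ by viewing $\Graph$ as the constituent pumpkin containing the longest edge with pendant subgraphs attached at its two vertices, and invoking the surgery principle Theorem~\ref{thm:increasing_vol}(\ref{item:attaching_a_pendant}) with $r=1$ (strict decrease because the pumpkin's eigenfunction is non-zero at the attachment vertices), whereas you build an explicit longitudinal test function (a half-cosine across the widest non-trivial pumpkin, constants $\pm 1$ on the rest, mean subtracted) and verify by direct computation that its Rayleigh quotient is below $\pi^2/(\ell^\ast)^2$; your reduction to $(A-B)^2<(A+B)(A+B+k_{i_0}\ell^\ast)$ is correct, and the condition $A+B>0$ is indeed exactly where the hypothesis that $\Graph$ is not a pumpkin with all natural conditions enters (just as non-vanishing of the pendant is where it enters in the paper). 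Your route is more elementary and self-contained (no appeal to the surgery toolkit), at the cost of a computation; the paper's route is shorter and showcases the surgery machinery the section is built around. Two small remarks: the fact that the transversal basis eigenfunctions vanish at the pumpkin vertices is contained in the proof of Lemma~\ref{lem:long-trans-decomp} rather than its statement, so you are implicitly using the construction there (legitimately); and your hands-on concavity/convexity propagation for monotonicity, including the sign-preserving rescaling of $\psi'$ by the factors $m_i/m_{i+1}$ at interior vertices, is a fleshed-out version of what the paper dismisses as routine Sturm--Liouville theory.
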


\begin{proof}
Let $\lambda^\ast = \lambda^\ast (\Graph)$ be the smallest eigenvalue having a 
non-constant longitudinal eigenfunction, call it $\psi$. If not all vertex conditions are 
natural, i.e., we are considering $\mu = \lambda_1$, then $\psi$ does not change 
sign, meaning $\lambda^\ast = \lambda_1$, and this eigenvalue is simple in the 
spectrum of $\Graph$.

Now consider the case of only natural conditions, i.e., $\mu = \lambda_2^N$. In 
this case, by invoking the decomposition of the spectrum and simplicity of longitudinal 
eigenvalues established in Lemma~\ref{lem:long-trans-decomp}, it suffices to show 
that $\lambda^\ast = \lambda_2^N$ and that no transversal eigenfunction has the 
same eigenvalue.

To this end, first observe that the eigenvalue of any transversal eigenfunction is always 
a non-zero eigenvalue of one of the constituent pumpkins of $\Graph$. If we denote the 
longest edge length in $\Graph$ by $|e_{\max}|$, then the smallest of these is 
$\pi^2/|e_{\max}|^2$. Hence, to complete the proof, we merely have to show that 
$\lambda_2^N (\Graph) < \pi^2/|e_{\max}|^2$.

Denote by $\mathcal{P} \subsetneq \Graph$ any constituent pumpkin whose smallest 
non-trivial eigenvalue equals $\pi^2/|e_{\max}|^2$; call its vertices $v_1$ and $v_2$. 
Then $\Graph$ may be formed from $\mathcal{P}$ by attaching the pendant(s) 
$\Graph \setminus \mathcal{P}$ to $\mathcal{P}$ at $v_1$ and/or $v_2$ as 
appropriate. Since $\lambda_2^N(\mathcal{P})$ has an eigenfunction which is 
non-zero at $v_1$ and at $v_2$, applying 
Theorem~\ref{thm:increasing_vol}(\ref{item:attaching_a_pendant}) with $r=1$ 
yields
\begin{displaymath}
	\lambda_2^N (\Graph) < \lambda_2^N (\mathcal{P}) = \frac{\pi^2}{|e_{\max}|^2}
\end{displaymath}
Now since $\psi$ corresponds to the first non-trivial eigenfunction of a one-dimensional 
Sturm--Liouville problem with $L^\infty$- (indeed, piecewise constant) weights 
(cf., e.g.,~\cite[Section~5.2]{KeKuMaMu_ahp16}), its monotonicity with respect to 
$\dist (\,\cdot\,,v_-)$ is a routine statement from Sturm--Liouville theory.
\end{proof}

With this background, we will now give three particular examples of special classes of 
pumpkin chains. On the one hand, this is a further illustration of what results can be 
obtained using the tools presented in Section~\ref{sec:tools}, in particular both the 
unfolding principles and the Hadamard principle (Remark~\ref{rem:hadamard}). At 
the same time, the examples show how the spectral gap $\lambda_2^N$ is reduced if 
more ``mass'' is concentrated symmetrically at the periphery of the graph, or as the 
pumpkin chain becomes ``thinner'' (more path-like). On the other hand, these examples 
will be needed for our principal application, in Section~\ref{sec:sizeof}.

We remark that the Hadamard-type formula for quantum graphs was used for a comparable 
but complementary analysis in \cite[Section~5]{BanLev_ahp17}, where the goal was to 
study properties of graphs which represented ``critical points'' with respect to this 
formula for a given graph topology, i.e., given an underlying discrete graph, 
to study those graphs whose every edge length was a critical point for $\lambda_2^N$. Here, 
the goal is to see how monotonic behaviour of the eigenfunction can be used to show 
that a continuous change in edge lengths can transform a graph into another one, 
such that $\lambda_2^N$ always increases or decreases under this transformation.

\subsection{Pumpkin-on-a-stick graphs}
\label{sec:pumpkin-on-a-stick}

Here we assume $\mathcal{V}=\mathcal{V}_N$ and consider $\mu=\lambda_2^N$. 
We will consider the following class of graphs.

\begin{definition}
\label{def:pumpkin-on-a-stick}
A pumpkin chain $\Graph$ shall be called a 
\emph{pumpkin-on-a-stick} if it is a $[1,m,1]$-pumpkin chain for some $m\geq 1$, and 
the $m$-pumpkin is equilateral. Here we allow the terminal one-pumpkins to be 
degenerate, i.e.\ have zero length.
\end{definition}

In any case, we refer to the $m$-pumpkin as the (non-trivial) pumpkin, and the union
of the $1$-pumpkins as the stick (cf.~Figure~\ref{fig:pumpkin-on-a-stick}). Note that 
equilateral pumpkins, tadpoles (lassos) and even path graphs are all special cases. We are 
interested in the following parameters, and the behaviour of $\lambda_2^N$ with 
respect to them:
\begin{enumerate}
\item the total length $L$;
\item the number of edges $m$ of the pumpkin;
\item the lengths $\ell_1$ and $\ell_2$ of the $1$-pumpkins (which we think of as the 
``left'' and the ``right'' ones, respectively), as well as the length 
$\ell := \ell_1 + \ell_2$ of the stick.
\end{enumerate}
For a given $L$, which will be fixed throughout, we will denote by 
$\pstick{\ell_1}{m}{\ell_2}$ the pumpkin-on-a-stick whose $1$-pumpkins have 
length $\ell_1$ and $\ell_2$, and whose central pumpkin has $m$ edges. If we 
denote by $T \subset \R^2$ the closed triangle whose vertices are $(0,0)$, 
$(L,0)$ and $(0,L)$, our condition on the $1$-pumpkins reads $(\ell_1,\ell_2) \in 
T$. Up to rigid transformations, any pumpkin-on-a-stick of total length $L$ is 
determined uniquely by the parameters $m\geq 1$ and $(\ell_1,\ell_2) \in T$.

The following proposition gives a complete description of how $\lambda_2^N$ 
depends on these parameters. The proof is based principally on 
Theorem~\ref{thm:transferring_vol}(\ref{item:symm_parallel}) (local symmetrisation) and 
\eqref{eq:hadamard_formula} (the Hadamard-type formula). We emphasise that 
the proof does \emph{not} involve any explicit calculations; in particular, we do not 
use any properties of the corresponding secular equations for the eigenvalues. 
We exclude the trivial case $m=1$ from our considerations.

\begin{figure}[H]
\begin{tikzpicture}[scale=0.8]
\coordinate (a) at (0,0);
\coordinate (b) at (2,0);
\coordinate (c) at (5,0);
\coordinate (d) at (6,0);
\draw[fill] (0,0) circle (2pt);
\draw[fill] (2,0) circle (2pt);
\draw[fill] (5,0) circle (2pt);
\draw (a) -- (b);
\draw[bend left=54] (b) edge (c);
\draw[bend right=54] (b) edge (c);
\draw[bend left=18] (b) edge (c);
\draw[bend right=18] (b) edge (c);
\draw[bend left=90] (b) edge (c);
\draw[bend right=90] (b) edge (c);
\draw (c) -- (d);
\draw[fill] (6,0) circle (2pt);
\coordinate (e) at (9,0);
\coordinate (f) at (12,0);
\coordinate (g) at (15,0);
\draw[fill] (9,0) circle (2pt);
\draw[fill] (12,0) circle (2pt);
\draw[bend left=90] (f) edge (g);
\draw[bend right=90] (f) edge (g);
\draw (e) -- (f);
\node at (a) [anchor=north east] {$v_-$};
\node at (d) [anchor=north west] {$v_+$};
\node at (e) [anchor=north east] {$v_-$};
\node at (g) [anchor=north west] {$v_+$};
\node at (b) [anchor=north east] {$v_1$};
\node at (c) [anchor=north west] {$v_2$};
\node at (1,0) [anchor=south] {$e_1$};
\node at (5.5,0) [anchor=south] {$e_2$};
\end{tikzpicture}
\caption{A ``generic'' pumpkin-on-a-stick (left), with $m=6$ and stick $e_1\cup e_2$, 
where $|e_1|=\ell_1$ and $|e_2|=\ell_2$; and a tadpole (right), with $m=2$ and 
$\ell_2=0$.}
\label{fig:pumpkin-on-a-stick}
\end{figure}
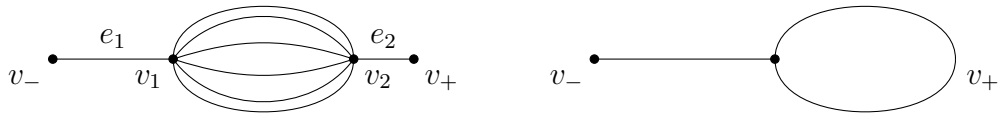

\begin{proposition}
  \label{prop:pumpkin-on-a-stick}
  Let $L$ be fixed and suppose $m \geq 2$ and  $(\ell_1,\ell_2) \in T$. Then 
  $\lambda_2^N (\pstick{\ell_1}{m}{\ell_2})$ is simple with corresponding eigenfunction 
  monotonic from $v_-$ to $v_+$, unless the graph is a pumpkin. For each fixed $m \geq 2$, 
  the function $(\ell_1,\ell_2) \mapsto \lambda_2^N (\pstick{\ell_1}{m}{\ell_2})$ is 
  continuous on the closed triangle $T$. Moreover,
  \begin{enumerate}
  \item \label{item:pumpkin-stick-size} for fixed $m \geq 2$ we have
  \begin{displaymath}
	\lambda_2^N (\pstick{\ell_1}{m}{\ell_2}) > \lambda_2^N (\pstick{\ell_1'}{m}{\ell_2'})
  \end{displaymath}
  whenever $\ell_1 \leq \ell_1'$ and $\ell_2 \leq \ell_2'$ with at least one inequality strict;
  \item \label{item:pumpkin-stick-thickness} for fixed $(\ell_1,\ell_2) \in T$ the function 
  $m \mapsto \lambda_2^N (\pstick{\ell_1}{m}{\ell_2})$ is strictly monotonically decreasing 
  in $m \geq 2$;
  \item \label{item:pumpkin-stick-distance} for fixed $m \geq 2$ and fixed $\ell=\ell_1 + 
  \ell_2$, the function $\ell_1 \mapsto \lambda_2^N (\pstick{\ell_1}{m}{\ell-\ell_1})$ is 
  strictly monotonically increasing in $\ell_1 \in [0,\ell/2]$;
  \item \label{item:pumpkin-stick-min} in particular, among all pumpkin-on-a-stick graphs 
  with fixed $\ell$ and $m$, the minimum of $\lambda_2^N$ is achieved at $\ell_1=0$, and 
  among all pumpkin-on-a-stick graphs with fixed $\ell$, the minimum is achieved at $\ell_1=0$ 
  and $m=2$, i.e.\ at the tadpole whose tail has length $\ell$.
  \end{enumerate}
  Finally, for given $m\geq 2$, $\lambda_2^N (\pstick{\ell_1}{m}{\ell_2})$ satisfies the bound
  \begin{equation}
  \label{eq:tadpole_apriori_bounds}
    \frac{\pi^2}{L^2} \leq \lambda_2^N (\pstick{\ell_1}{m}{\ell_2}) \leq \frac{\pi^2 m^2}{L^2},
  \end{equation}
  with equality in the lower estimate if and only if $\pstick{\ell_1}{m}{\ell_2}$ is a path (i.e.\ $\ell=L$) 
  and in the upper one if and only if $\pstick{\ell_1}{m}{\ell_2}$ is a pumpkin (i.e.\ $\ell=0$).
\end{proposition}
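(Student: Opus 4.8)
The plan is to dispatch the structural statements (simplicity, monotonicity of the eigenfunction, continuity), parts~(1) and~(2), and the two-sided bound by direct appeals to Section~\ref{sec:tools} and to Lemma~\ref{lem:simple-chain}, and then to invest the real work in the strict monotonicity of part~(3) via the Hadamard formula, which I expect to be the only genuinely delicate point. Whenever $m\ge 2$ and $\ell:=\ell_1+\ell_2<L$, the graph $\pstick{\ell_1}{m}{\ell_2}$ is a locally equilateral pumpkin chain with purely natural conditions that is not a pumpkin, so Lemma~\ref{lem:simple-chain} gives at once that $\lambda_2^N$ is simple, realised by the first non-constant longitudinal eigenfunction $\psi$, which is monotonically increasing from $v_-$ to $v_+$; the two degenerate cases $\ell=L$ (a path) and $\ell=0$ (a pumpkin) are trivial or are the stated exception. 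Continuity of $(\ell_1,\ell_2)\mapsto\lambda_2^N$ on $T$ follows since $\lambda_2^N$ equals the first positive eigenvalue of a Neumann Sturm--Liouville problem on an interval with a piecewise-constant weight depending continuously on $(\ell_1,\ell_2)$.

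For part~(1) I would lengthen each stick edge in turn. If $\ell_1>0$, Corollary~\ref{cor:increasing_vol}(\ref{item:lengthening_an_edge}) gives the decrease of $\lambda_2^N$ on lengthening $e_1$, strictly so because $\psi$ restricted to the pendant edge $e_1$ equals $\psi(v_-)\cos(\sqrt{\lambda}\,\cdot\,)$ with $\psi(v_-)\ne 0$ (if $\psi(v_-)=0$ one propagates $\psi\equiv 0$ through $v_1$ onto the pumpkin, hence onto all of $\Graph$, a contradiction); if $\ell_1=0$ one attaches a pendant interval at $v_1$ and uses Theorem~\ref{thm:increasing_vol}(\ref{item:attaching_a_pendant}) with $r=1$ (Remark~\ref{rem:increasing_vol}), strictness now following since a $\lambda_2^N$-eigenfunction is sign-changing and cannot vanish at the minimum point $v_-$. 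The same treatment applies to $e_2$, and general comparisons $(\ell_1,\ell_2)\le(\ell_1',\ell_2')$ follow by performing the two operations successively. Part~(2) follows from Theorem~\ref{thm:transferring_vol}(\ref{item:symm_parallel}): the monotone $\psi$ is monotone along each pumpkin edge, so symmetrising the central $m$-pumpkin to fewer parallel edges is admissible, and the inequality is strict since that symmetrisation is non-trivial (it changes both the multiplicity and the individual edge lengths) and $\psi$ is non-constant on the pumpkin (one checks $\psi(v_1)<\psi(v_2)$, else $\psi\equiv 0$ on the pumpkin forces $\psi\equiv 0$). The a priori bound is then immediate: the lower estimate with its equality case is Nicaise's inequality $\lambda_2^N(\Graph)\ge\pi^2/L^2$ (equality only for a path), while the upper estimate and its equality case come from part~(1) applied with $(\ell_1,\ell_2)=(0,0)$, using that the equilateral $m$-pumpkin of length $L$ has smallest positive eigenvalue $(\pi m/L)^2$ (longitudinal and transversal alike).

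The heart of the matter is part~(3). Fix $\ell$ with $0<\ell<L$ and $\ell_1\in(0,\ell/2)$; then $\lambda:=\lambda_2^N$ is simple, so by \eqref{eq:hadamard_formula}--\eqref{eq:simple-hadamard}, since raising $\ell_1$ lengthens $e_1$ and shortens $e_2$ by the same amount, $\tfrac{d}{d\ell_1}\lambda_2^N$ has the sign of $-\mathscr{E}_{e_1}+\mathscr{E}_{e_2}$. As $e_1,e_2$ are pendant edges with natural conditions at $v_-$ resp.\ $v_+$, the (edge-constant) Pr\"ufer amplitude is $\mathscr{E}_{e_1}=\lambda\psi(v_-)^2$ and $\mathscr{E}_{e_2}=\lambda\psi(v_+)^2$, so the claim reduces to $|\psi(v_-)|<|\psi(v_+)|$. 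Write $c=|\psi(v_-)|$, $d=|\psi(v_+)|$, $\theta_i=\sqrt{\lambda}\,\ell_i$. The key observation is that $\psi'^2+\lambda\psi^2$ is conserved also on the (folded) pumpkin edge, since $-\psi''=\lambda\psi$ holds there too, while across $v_1$ and $v_2$ it changes only through the factor $1/m$ in the normal derivative forced by current conservation; matching at $v_1$ and at $v_2$ therefore yields
\[
  \lambda c^2\Bigl(\tfrac{\sin^2\theta_1}{m^2}+\cos^2\theta_1\Bigr)
  \;=\;\mathscr{E}_{\mathrm{pumpkin}}\;=\;
  \lambda d^2\Bigl(\tfrac{\sin^2\theta_2}{m^2}+\cos^2\theta_2\Bigr).
\]
Setting $g(\theta)=1-(1-m^{-2})\sin^2\theta$, which for $m\ge 2$ is strictly decreasing in $\sin^2\theta$, this reads $c^2g(\theta_1)=d^2g(\theta_2)$, so $c<d\iff\sin^2\theta_1<\sin^2\theta_2$. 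Now $0<\theta_1<\theta_2$ because $\ell_1<\ell_2$, and an elementary test function on $\Graph$ --- equal to $\cos(\pi\ell_1/\ell)$ on the whole pumpkin and to $\cos(\pi\dist(\,\cdot\,,v_-)/\ell)$, resp.\ $\cos(\pi(\ell-\dist(\,\cdot\,,v_+))/\ell)$, on the two stick edges, corrected by its mean --- has Rayleigh quotient strictly below $\pi^2/\ell^2$, so $\theta_1+\theta_2=\sqrt\lambda\,\ell<\pi$; since $\sin$ increases on $[0,\pi/2]$ and is symmetric about $\pi/2$, the conditions $0<\theta_1<\theta_2$ and $\theta_1+\theta_2<\pi$ force $\sin\theta_1<\sin\theta_2$. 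Hence $c<d$, so $\tfrac{d}{d\ell_1}\lambda_2^N>0$ on $(0,\ell/2)$, and strict monotonicity on all of $[0,\ell/2]$ follows by continuity; part~(4) is then just the combination of~(1)--(3).

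The step I expect to be the real obstacle --- the only one not amounting to bookkeeping --- is establishing $|\psi(v_-)|<|\psi(v_+)|$: one must correctly track how the Pr\"ufer amplitude $\mathscr{E}$ jumps across the two vertices bounding the pumpkin, and then bring in the a priori inequality $\sqrt{\lambda_2^N}\,\ell<\pi$. If one prefers to avoid the auxiliary test function, one could instead split $[0,\ell/2]$ according to the position of the unique zero of $\psi$ and rule out the offending configuration directly, but the route above seems cleanest.
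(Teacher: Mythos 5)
Your treatment of part~(\ref{item:pumpkin-stick-size}) has a genuine gap. In the proposition both $\pstick{\ell_1}{m}{\ell_2}$ and $\pstick{\ell_1'}{m}{\ell_2'}$ have the \emph{same} total length $L$: enlarging the sticks means the central pumpkin shrinks from total length $L-\ell_1-\ell_2$ to $L-\ell_1'-\ell_2'$. Your argument via Corollary~\ref{cor:increasing_vol}(\ref{item:lengthening_an_edge}) (or attaching a pendant when $\ell_1=0$) genuinely lengthens the graph: it produces an intermediate graph with sticks $\ell_1',\ell_2'$ but the \emph{original, larger} pumpkin, of total length $L+(\ell_1'-\ell_1)+(\ell_2'-\ell_2)>L$. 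To reach $\pstick{\ell_1'}{m}{\ell_2'}$ from there one must shorten the $m$ pumpkin edges, and shortening edges pushes $\lambda_2^N$ back \emph{up}, so the two inequalities point in opposite directions and nothing follows. What is needed is a length-preserving transfer from pumpkin to stick, which is exactly how the paper argues: insert dummy vertices on each parallel edge at distance $(\ell_1'-\ell_1)/m$, resp.\ $(\ell_2'-\ell_2)/m$, from the end vertices, glue each family of dummy vertices (eigenvalue unchanged by Corollary~\ref{cor:joining_points}, since the eigenfunction is longitudinal and takes a common value there), and then unfold the two resulting side $m$-pumpkins into path pieces by Theorem~\ref{thm:transferring_vol}(\ref{item:unfolding_parallel}); strictness comes from the strict longitudinal monotonicity of the eigenfunction. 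Note also that your proof of the upper bound in \eqref{eq:tadpole_apriori_bounds} is derived from part~(\ref{item:pumpkin-stick-size}) and therefore inherits this gap (the lower bound via Nicaise is fine).

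The rest of the proposal is sound. Simplicity, monotonicity, continuity and part~(\ref{item:pumpkin-stick-thickness}) are handled as in the paper (Lemma~\ref{lem:simple-chain} plus symmetrisation with the strictness criterion). Part~(\ref{item:pumpkin-stick-distance}) is correct and takes a genuinely different route: the paper locates the zero set of $\psi$ relative to $v_1,v_2$ by a reflection argument, deduces $|\psi(v_1)|<|\psi(v_2)|$ and compares $\mathscr{E}_{e_1},\mathscr{E}_{e_2}$ through the Kirchhoff condition, whereas you compute the Pr\"ufer amplitudes explicitly from the Neumann (cosine) form on the pendant edges, reduce the claim to $\sin^2\theta_1<\sin^2\theta_2$, and obtain this from the a priori bound $\sqrt{\lambda}\,\ell<\pi$; your test function does deliver that bound, since after mean correction the denominator equals $\ell/2+(L-\ell)\cos^2(\pi\ell_1/\ell)\,\ell/L>\ell/2$ while the numerator is $(\pi^2/\ell^2)(\ell/2)$, valid for $\ell_1\in(0,\ell/2)$ and $\ell<L$ (the degenerate case $\ell=L$ being trivial). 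This is an acceptable, arguably more explicit, alternative to the paper's reflection step; what remains to be repaired is only part~(\ref{item:pumpkin-stick-size}), e.g.\ by the glue-and-unfold surgery above or by a Hadamard comparison between a stick edge and a pumpkin edge.
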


We observe that (\ref{item:pumpkin-stick-size}) contains the statement that the spectral gap of a tadpole is a strictly 
increasing function of the length of its loop (if the total length is fixed); in particular, it runs from 
$\pi^2/L^2$ if the loop has length $0$ to $4\pi^2/L^2$ if the loop has length $L$.

\begin{proof}[Proof of Proposition~\ref{prop:pumpkin-on-a-stick}]
The statements about simplicity and the corresponding eigenfunction were proved 
in Lemma~\ref{lem:simple-chain}. The statements about continuity 
follow from general results about the stability of the spectrum with respect to changes 
in the edge lengths, including in the degenerate case when an edge contracts to zero; 
see for example~\cite{BerKuc_sg12} or~\cite[Appendix~A]{BanLev_ahp17} (or
\cite{BeLaSu_prep18} for general vertex conditions including $\delta$-type). 
For the rest, we will rely primarily on the unfolding 
principles from Theorem~\ref{thm:transferring_vol} and the Hadamard-type formula in the 
form \eqref{eq:hadamard_formula}.

(\ref{item:pumpkin-stick-size})  Consider two pumpkin-on-a-chain graphs 
$\mathcal P = \pstick{\ell_1}{m}{\ell_2}$ and $ \mathcal P' = \pstick{\ell_1'}{m}{\ell_2'}$ 
with, say, $\ell_1 < \ell_1'$ and $\ell_2 \leq \ell_2'$. 
In the graph $\mathcal{P}$, on each parallel edge insert dummy vertices at 
the distance $(\ell_1'-\ell_1)/m$ from the left terminal vertex and at the distance 
$(\ell_2'-\ell_2)/m$ from the right terminal vertex, cf.~Figure~\ref{fig:shifting-pumpkins-2}.

We glue the dummy vertices in such a way as to obtain a locally equilateral 
pumpkin chain with three $m$-pumpkins. This leaves $\lambda_2^N$ unchanged by 
Corollary~\ref{cor:joining_points}.  The two side pumpkins can be unfolded,
leading to the graph $\mathcal{P}'$; 
Theorem~\ref{thm:transferring_vol}(\ref{item:unfolding_parallel}) yields 
the inequality $\lambda_2^N (\mathcal{P}) > \lambda_2^N (\mathcal{P}')$, 
which is strict because the longitudinal eigenfunction corresponding to $\lambda_2^N 
(\mathcal{P})$ is strictly monotonic in the longitudinal direction, and $\ell_1 < \ell_1'$.

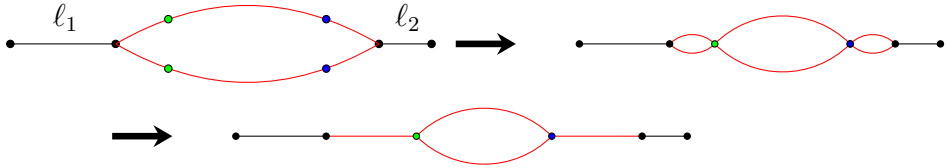
\begin{figure}[H]
\begin{minipage}[l]{5.8cm}
\begin{tikzpicture}[scale=0.7]
\coordinate (a) at (0,0);
\coordinate (b) at (2,0);
\coordinate (b1) at (3,0.47);
\coordinate (b2) at (3,-0.47);
\coordinate (c1) at (6,0.47);
\coordinate (c2) at (6,-0.47);
\coordinate (c) at (7,0);
\coordinate (d) at (8,0);
\draw[fill] (a) circle (2pt);
\draw[fill] (b) circle (2pt);
\draw[fill] (c) circle (2pt);
\draw[fill] (d) circle (2pt);
\draw (a) -- (b);
\draw[red,bend left=30] (b) edge (c);
\draw[red,bend right=30] (b) edge (c);
\draw (c) -- (d);
\draw (1.5,.5) node[left]{$\ell_1$};
\draw (8,.5) node[left]{ $\ell_2$};
\draw[fill=green] (b1) circle (2pt);
\draw[fill=green] (b2) circle (2pt);
\draw[fill=blue] (c1) circle (2pt);
\draw[fill=blue] (c2) circle (2pt);
\end{tikzpicture}
\end{minipage}
\begin{minipage}[l]{1.5cm}
\begin{tikzpicture}[scale=0.8]
\draw[-{Stealth[scale=0.5,angle'=60]},line width=2.5pt] (8,0) -- (9,0);
\end{tikzpicture}
\end{minipage}
\begin{minipage}[l]{5.8cm}
\begin{tikzpicture}[scale=0.6]
\coordinate (a) at (0,0);
\coordinate (b) at (2,0);
\coordinate (b1) at (3,0);
\coordinate (c) at (5,0);
\coordinate (c1) at (6,0);
\coordinate (d) at (7,0);
\coordinate (e) at (8,0);
\draw (a) -- (b);
\draw[red,bend left=45] (b) edge (b1);
\draw[red,bend right=45] (b) edge (b1);
\draw[red,bend right=45] (b1) edge (c1);
\draw[red,bend left=45] (b1) edge (c1);
\draw[red,bend left=45] (c1) edge (d);
\draw[red,bend right=45] (c1) edge (d);
\draw (d) -- (e);
\draw[fill] (a) circle (2pt);
\draw[fill] (b) circle (2pt);
\draw[fill] (c1) circle (2pt);
\draw[fill] (d) circle (2pt);
\draw[fill] (e) circle (2pt);
\draw[fill=green] (b1) circle (2pt);
\draw[fill=blue] (c1) circle (2pt);
\end{tikzpicture}
\end{minipage}

\begin{minipage}[l]{1.5cm}
\begin{tikzpicture}[scale=0.8]
\draw[-{Stealth[scale=0.5,angle'=60]},line width=2.5pt] (8,0) -- (9,0);
\end{tikzpicture}
\end{minipage}
\begin{minipage}[l]{9cm}
\begin{tikzpicture}[scale=0.6]
\coordinate (a) at (-1,0);
\coordinate (b) at (1,0);
\coordinate (b1) at (3,0);
\coordinate (c) at (5,0);
\coordinate (c1) at (6,0);
\coordinate (d) at (8,0);
\coordinate (e) at (9,0);
\draw (a) -- (b);
\draw[red] (b) -- (b1);
\draw[red,bend right=45] (b1) edge (c1);
\draw[red,bend left=45] (b1) edge (c1);
\draw[red] (c1) -- (d);
\draw (d) -- (e);
\draw[fill] (a) circle (2pt);
\draw[fill] (b) circle (2pt);
\draw[fill=green] (b1) circle (2pt);
\draw[fill=blue] (c1) circle (2pt);
\draw[fill] (d) circle (2pt);
\draw[fill] (e) circle (2pt);
\end{tikzpicture}
\end{minipage}
\caption{The first graph $\mathcal{P} = \pstick{\ell_1}{m}{\ell_2}$ is turned 
into an auxiliary graph and finally into $\mathcal{P}' = \pstick{\ell_1'}{m}{\ell_2'}$.}
\label{fig:shifting-pumpkins-2}
\end{figure}

(\ref{item:pumpkin-stick-thickness}) This follows directly from an application of 
Theorem~\ref{thm:transferring_vol}(\ref{item:symm_parallel}) 
to the non-trivial pumpkin, again noting that the eigenfunction does not vanish on the 
pumpkin in question.

(\ref{item:pumpkin-stick-distance}) Here we use our Hadamard formula \eqref{eq:simple-hadamard}. 
First some notation: we label the internal vertices as $v_1$ and $v_2$ as depicted in 
Figure~\ref{fig:pumpkin-on-a-stick}, so that $v_1$ is closer to $v_-$ and $v_2$ is 
closer to $v_+$. We also write $e_1 \sim v_- v_1$ and $e_2 \sim v_2 v_+$, and 
denote by $e_p$ any of the $m$ parallel edges of the central pumpkin.

Fix a pumpkin-on-a-stick and assume that for this graph $\ell_1>\ell_2$. We claim 
that to prove (\ref{item:pumpkin-stick-distance}) it is sufficient to show that
\begin{equation}
\label{eq:to-do-energetically}
	\mathscr{E}_{e_1} > \mathscr{E}_{e_2},
\end{equation}
where $\mathscr{E}_{e}$ is the Pr\"ufer amplitude defined in 
\eqref{eq:hadamard_formula}. Indeed, if this holds \emph{whenever} $\ell_1>\ell_2$, then 
by \eqref{eq:simple-hadamard}, further lengthening $e_1$ and shortening $e_2$ 
\emph{always} decreases $\lambda_2^N$. Since $\lambda_2^N$ is continuous also at 
$\ell_2=0$, this yields strict monotonicity on the entire range of possible values of $\ell_1$.

To prove \eqref{eq:to-do-energetically}, we first observe that the set $\{\psi=0\}$ of zeros 
of the eigenfunction (which we recall consists of all points of the form $\{x : \dist (x,v_-) 
= c\}$ for some $c>0$ depending on the graph) is closer to $v_1$ than $v_2$ (we do not 
rule out the possibility that it is on the edge $e_1$). Indeed, if it were not, then by 
reflecting $\{\psi \geq 0\}$ across the set $\{\psi=0\}$ (i.e.~creating a new graph which 
is reflection symmetric across $\{\psi=0\}$, such that each half is a copy of $\{\psi \geq 0 
\}$) we would obtain a new pumpkin-on-a-stick $\widetilde{\mathcal{P}}$ with a shorter 
pumpkin and a shorter edge replacing $e_1$ (since $v_2$ is closer to $v_+$ than $v_1$ 
is to $v_-$). But then the odd extension of $\psi$ from $\{\psi \geq  0\}$ to 
$\widetilde{\mathcal{P}}$ is still an eigenfunction but with the same eigenvalue. Since at 
least one edge of $\widetilde{\mathcal{P}}$ is strictly shorter than its original counterpart, 
we have created a (possibly degenerate) pumpkin chain with shorter edges and the same 
$\lambda_2^N$: this is an obvious contradiction to 
Corollary~\ref{cor:increasing_vol}(\ref{item:lengthening_an_edge}) (lengthening edges).

In particular, since $\psi|_{e_p}$ is sinusoidal and monotonic, the fact that the zero is closer to 
$v_1$ than $v_2$ means that $|\psi(v_1)| < |\psi(v_2)|$.

Armed with this, we now consider the Pr\"ufer amplitudes: since $\mathscr{E}_e$ 
depends only on the edge $e$ in question, writing $\lambda$ for $\lambda_2^N 
(\pstick{\ell_1}{m}{\ell_2})$,
\begin{equation}
\label{eq:constant-energy}
	\left[\nd{\psi}{e_p}{v_1}\right]^2 + \lambda \psi|_{e_p}(v_1)^2 = 
	\left[\nd{\psi}{e_p}{v_2}\right]^2 + \lambda \psi|_{e_p}(v_2)^2.
\end{equation}
Now we use the vertex conditions to translate this into a comparison 
between $\mathscr{E}_{e_1}$ and $\mathscr{E}_{e_2}$: continuity implies 
$\psi|_{e_p}(v_i) = \psi|_{e_i} (v_i) \equiv \psi(v_i)$ for $i=1,2$, while the Kirchhoff 
condition and the fact that $\psi$ is identical on each of the $m$ parallel edges of the 
pumpkin mean that $\nd{\psi}{e_i}{v_i} = m \nd{\psi}{e_p}{v_i}$ for $i=1,2$. Inserting 
these into \eqref{eq:constant-energy} yields
\begin{equation}
\label{eq:comparison-energy}
	\frac{1}{m}\left[\nd{\psi}{e_1}{v_1}\right]^2 + \lambda \psi|_{e_1}(v_1)^2
	= \frac{1}{m}\left[\nd{\psi}{e_2}{v_2}\right]^2 + \lambda \psi|_{e_2}(v_2)^2.
\end{equation}
Combining this with what we showed earlier, viz.\ $|\psi|_{e_1}(v_1)|<|\psi|_{e_2} 
(v_2)|$, we deduce that $\left|\nd{\psi}{e_1}{v_1}\right| > \left|\nd{\psi}{e_2}{v_2}
\right|$. Multiplying this latter inequality by $(1-1/m)$ and adding it to 
\eqref{eq:comparison-energy} now gives
\begin{displaymath}
	\mathscr{E}_{e_1} = \left[\nd{\psi}{e_1}{v_1}\right]^2 + 
	\lambda \psi|_{e_1}(v_1)^2 > \left[\nd{\psi}{e_2}{v_2}\right]^2 + 
	\lambda \psi|_{e_2}(v_2)^2 = \mathscr{E}_{e_2},
\end{displaymath}
which was to be proved.

(\ref{item:pumpkin-stick-min}) This follows immediately from 
(\ref{item:pumpkin-stick-thickness}) and (\ref{item:pumpkin-stick-distance}).

Finally, the bounds \eqref{eq:tadpole_apriori_bounds} follow directly from the (strict) 
monotonicity results of (\ref{item:pumpkin-stick-size}), since decreasing $\ell$ to $L$ 
produces a pumpkin corresponding to the upper bound, increasing $\ell$ to $0$ yields 
a path, and the behaviour of $\lambda_2^N$ is (strictly) monotonic between the two 
extremities.
\end{proof}

\subsection{Pumpkin dumbbells}
\label{sec:pumpkin-dumbbell}

We again assume $\mathcal{V} = \mathcal{V}_N$ and $\mu = \lambda_2^N$. By a 
\emph{dumbbell}, we understand a graph consisting of an edge, or \emph{handle}, $e_0$, 
with a loop attached to each end; in other words, it is a locally equilateral 
$[2,1,2]$-pumpkin chain. Here we consider a slightly more general class:

\begin{definition}
\label{def:pumpkin-dumbbell}
  Fix $m\geq 1$. A locally equilateral $[m,1,m]$-pumpkin chain 
  will be called a \emph{pumpkin dumbbell}. Any constituent 
  pumpkin is allowed to be degenerate. The middle $1$-pumpkin will be called the 
  \emph{handle}.
\end{definition}

We will again fix the total length $L$ and denote by $\pdb{\ell_1}{\ell_2}{m}$ the 
pumpkin dumbbell of length $L$, unique up to symmetries, such that
\begin{enumerate}
\item the (``left'') pumpkin adjacent to $v_-$ has total length $\ell_1 \in [0,L]$;
\item the (``right'') pumpkin adjacent to $v_+$ has total length $\ell_2 \in [0,L]$, with 
$\ell_1 + \ell_2 \leq L$;
\item the two outer pumpkins both have $m \geq 1$ edges of length $\ell_1/m$ and 
$\ell_2/m$, respectively.
\end{enumerate}
See Figure~\ref{fig:pumpkin-dumbbell}. As before, we will also write $T \subset \R^2$ 
for the closed triangle whose vertices are $(0,0)$, $(L,0)$ and $(0,L)$, so that 
$(\ell_1,\ell_2) \in T$. The pumpkin dumbbell coincides with a pumpkin-on-a-stick if 
$\ell_1=0$ or $\ell_2=0$, and also covers the special cases of a path if $\ell_1=\ell_2=0$, 
and a figure-8 if $m=2$ and $\ell_1+\ell_2=L$. If $m=2$, then we will also write
\begin{displaymath}
	\db{\ell_1}{\ell_2} := \pdb{\ell_1}{\ell_2}{2}
\end{displaymath}
for a (conventional) dumbbell. If in addition $\ell_2=0$,  then we shall write
\begin{equation}
\label{eq:notation-toad}
	\tp{\ell_1}:=\db{\ell_1}{0}\equiv\pdb{\ell_1}{0}{2}\equiv\pstick{L-\ell_1}{2}{0}
\end{equation}
for the tadpole (or lasso) of total length $L$, whose loop is of length $\ell_1$.
\begin{figure}[H]
\begin{tikzpicture}[scale=0.8]
\coordinate (a) at (0,0);
\coordinate (b) at (2,0);
\coordinate (c) at (5,0);
\coordinate (d) at (8,0);
\draw[fill] (0,0) circle (2pt);
\draw[fill] (2,0) circle (2pt);
\draw[fill] (5,0) circle (2pt);
\draw[fill] (8,0) circle (2pt);
\draw[bend left=90] (a) edge (b);
\draw[bend right=90] (a) edge (b);
\draw[bend left=90] (c) edge (d);
\draw[bend right=90] (c) edge (d);
\draw (b) -- (c);
\node at (a) [anchor=south east] {$v_-$};
\node at (d) [anchor=south west] {$v_+$};
\node at (b) [anchor=north west] {$v_1$};
\node at (c) [anchor=north east] {$v_2$};
\node at (3.5,0) [anchor=south] {$e_0$};
\coordinate (e) at (11,0);
\coordinate (f) at (13,0);
\coordinate (g) at (16,0);
\coordinate (h) at (19,0);
\draw[fill] (11,0) circle (2pt);
\draw[fill] (13,0) circle (2pt);
\draw[fill] (16,0) circle (2pt);
\draw[fill] (19,0) circle (2pt);
\node at (e) [anchor = south east] {$v_-$};
\node at (h) [anchor = south west] {$v_+$};
\node at (f) [anchor = north west] {$v_1$};
\node at (g) [anchor = north east] {$v_2$};
\node at (14.5,0) [anchor=south] {$e_0$};
\draw (f) -- (g);
\draw[bend left=54] (e) edge (f);
\draw[bend right=54] (e) edge (f);
\draw[bend left=18] (e) edge (f);
\draw[bend right=18] (e) edge (f);
\draw[bend left=90] (e) edge (f);
\draw[bend right=90] (e) edge (f);
\draw[bend left=54] (g) edge (h);
\draw[bend right=54] (g) edge (h);
\draw[bend left=18] (g) edge (h);
\draw[bend right=18] (g) edge (h);
\draw[bend left=90] (g) edge (h);
\draw[bend right=90] (g) edge (h);
\end{tikzpicture}
\caption{A dumbbell consisting of a handle $e_0$ joining two 
loops, which may (as here) be imagined as being 2-pumpkins (left); a 
more general pumpkin dumbbell with $m=6$ (right), for which the pumpkin 
on the left has total length $\ell_1$ and the one on the right total length 
$\ell_2$.}\label{fig:pumpkin-dumbbell}
\end{figure}
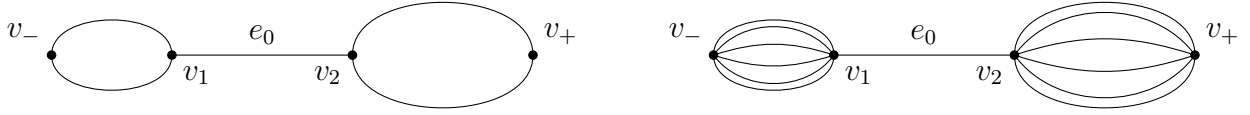

Our result, in addition to monotonicity statements analogous to those in 
Proposition~\ref{prop:pumpkin-on-a-stick}, states that ``balancing'' the 
two pumpkins, i.e., making them more equal in size, lowers the spectral 
gap $\lambda_2^N$. The tools used will be essentially the same.

\begin{proposition}
\label{prop:pumpkin-dumbbell}
Suppose that $\pdb{\ell_1}{\ell_2}{m}$ is a pumpkin dumbbell of 
fixed total length $L\geq \ell_1 + \ell_2$ with $m\geq 2$. Then 
$\lambda_2^N (\pdb{\ell_1}{\ell_2}{m})$ is simple with corresponding 
eigenfunction monotonic from $v_-$ to $v_+$, unless $\ell_1=L$ or 
$\ell_2=L$, i.e., unless it is a pumpkin. The function $(\ell_1,\ell_2) 
\mapsto \lambda_2^N (\pdb{\ell_1}{\ell_2}{m})$ is continuous on the 
closed triangle $T$. Moreover,
\begin{enumerate}
\item \label{item:pumpkin-dumbbell-size} 
for each fixed $m\geq 2$ and $\ell_2 \in [0,L]$, the function $\ell_1 
\mapsto \lambda_2^N (\pdb{\ell_1}{\ell_2}{m})$ is a strictly 
monotonically increasing function of $\ell_1 \in [0,L-\ell_2]$. A corresponding 
statement holds {\emph{mutatis mutandis}} if the roles of $\ell_1$ and 
$\ell_2$ are interchanged;
\item \label{item:pumpkin-dumbbell-balance}
if $\ell:=\ell_1+\ell_2$ and $m\geq 2$ are fixed, then $\ell_1 \mapsto 
\lambda_2^N (\pdb{\ell_1}{\ell-\ell_1}{m})$ is strictly 
monotonically increasing in $\ell_1 \in (0,\ell/2)$;
\item \label{item:pumpkin-dumbbell-thickness}
if $\ell_1,\ell_2 \in (0,L)$ are fixed, then $m\mapsto \lambda_2^N 
(\pdb{\ell_1}{\ell_2}{m})$ is strictly monotonically increasing in $m\geq 1$;
\item \label{item:pumpkin-dumbbell-min}
in particular, among all pumpkin dumbbells $\pdb{\ell_1}{\ell_2}{m}$ 
for which $\ell=\ell_1+\ell_2$ is fixed, $\lambda_2^N (\pdb{\ell_1}{\ell_2}{m}))$ 
is uniquely minimised when $\ell_1 = \ell_2 = \ell/2$; among all such 
pumpkin dumbbells where $m\geq 2$ is also allowed to vary, the minimum 
is achieved only by the regular dumbbell ($m=2$, $\ell_1=\ell_2=\ell/2$).
\end{enumerate}
Finally, for given $m\geq 2$, $\lambda_2^N (\pdb{\ell_1}{\ell_2}{m})$ satisfies 
the bound
\begin{equation}
\label{eq:dumbbell_apriori_bounds}
	\frac{\pi^2}{L^2} \leq \lambda_2^N (\pdb{\ell_1}{\ell_2}{m}) \leq
	\frac{\pi^2 m^2}{L^2},
\end{equation}
with equality in the lower estimate if and only if $\pdb{\ell_1}{\ell_2}{m}$ is a 
path (i.e.\ $\ell_1=\ell_2=0$) and in the upper one if and only if $\ell_1+\ell_2=L$.
\end{proposition}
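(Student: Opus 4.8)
The plan is to follow the template of the proof of Proposition~\ref{prop:pumpkin-on-a-stick}, since a pumpkin dumbbell $\pdb{\ell_1}{\ell_2}{m}$ is again a locally equilateral pumpkin chain with only natural conditions. The simplicity of $\lambda_2^N(\pdb{\ell_1}{\ell_2}{m})$ and the monotonicity of its (longitudinal) eigenfunction $\psi$ from $v_-$ to $v_+$ are a direct application of Lemma~\ref{lem:simple-chain}, valid precisely because for $m\geq 2$ and $\ell_1,\ell_2<L$ the graph is a $[m,1,m]$-pumpkin chain and hence not a single pumpkin; if $\ell_1=L$ or $\ell_2=L$ it degenerates to an equilateral pumpkin. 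Continuity of $(\ell_1,\ell_2)\mapsto\lambda_2^N$ on the closed triangle $T$, including the degenerate cases where edges contract to zero, follows from the general stability results already cited in the proof of Proposition~\ref{prop:pumpkin-on-a-stick}.

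The two local monotonicity statements (\ref{item:pumpkin-dumbbell-size}) and (\ref{item:pumpkin-dumbbell-balance}) will be obtained from the Hadamard formula \eqref{eq:hadamard_formula}. The key preliminary observation is that on any edge $e_p$ of one of the two outer pumpkins, incident with the terminal vertex $v_\pm$, the Pr\"ufer amplitude equals $\mathscr{E}_{e_p}=\lambda\,\psi(v_\pm)^2$: indeed, the natural condition at the degree-$m$ vertex $v_\pm$ together with the fact that $\psi$ coincides on all $m$ parallel edges forces $\nd{\psi}{e_p}{v_\pm}=0$, so $\psi|_{e_p}$ is a cosine of amplitude $|\psi(v_\pm)|$ and $\mathscr{E}_{e_p}=(\psi')^2+\lambda\psi^2=\lambda\,\psi(v_\pm)^2$. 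For (\ref{item:pumpkin-dumbbell-size}): with $\ell_2$ and $L$ fixed, lengthening each of the $m$ left-pumpkin edges by $\varepsilon/m$ and shortening the handle $e_0$ by $\varepsilon$ increases $\ell_1$ by $\varepsilon$; by \eqref{eq:hadamard_formula} and the chain rule, $\tfrac{d}{d\ell_1}\lambda_2^N=\mathscr{E}_{e_0}-\mathscr{E}_{e_p}$ with $e_p$ a left-pumpkin edge. Evaluating both amplitudes at the internal vertex $v_1$ and using the Kirchhoff relation $\nd{\psi}{e_0}{v_1}=-m\,\nd{\psi}{e_p}{v_1}$ there, this becomes $(m^2-1)\big(\nd{\psi}{e_p}{v_1}\big)^2\geq 0$, and it is strictly positive because $\psi$ is strictly monotone along the chain, so its longitudinal derivative cannot vanish at the non-terminal vertex $v_1$ (otherwise, by the same relation, it would vanish on both sides of $v_1$, forcing an interior extremum). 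Strict monotonicity on the open parameter interval plus continuity yields it on the closed interval; the statement for $\ell_2$ is symmetric.

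For (\ref{item:pumpkin-dumbbell-balance}), keep $\ell=\ell_1+\ell_2$ (hence $L$ and the handle length) fixed, and lengthen each left-pumpkin edge by $\varepsilon/m$ while shortening each right-pumpkin edge by $\varepsilon/m$; then $\tfrac{d}{d\ell_1}\lambda_2^N=\mathscr{E}_{e_p^{R}}-\mathscr{E}_{e_p^{L}}=\lambda\big(\psi(v_+)^2-\psi(v_-)^2\big)$, so the whole statement reduces to comparing $|\psi(v_-)|$ with $|\psi(v_+)|$, equivalently to locating the unique zero of the monotone eigenfunction $\psi$. This is to be done exactly as in the proof of Proposition~\ref{prop:pumpkin-on-a-stick}(\ref{item:pumpkin-stick-distance}): if the zero lay on the ``wrong'' side, reflecting the corresponding half across $\{\psi=0\}$ produces a (possibly degenerate) pumpkin dumbbell carrying the odd extension of $\psi$ as an eigenfunction, all of whose edges are no longer than those of $\pdb{\ell_1}{\ell_2}{m}$ and at least one strictly shorter, yet with the same $\lambda_2^N$, contradicting Corollary~\ref{cor:increasing_vol}(\ref{item:lengthening_an_edge}). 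This fixes the sign of the derivative so that the balanced configuration is the minimiser, giving the asserted strict monotonicity of $\ell_1\mapsto\lambda_2^N(\pdb{\ell_1}{\ell-\ell_1}{m})$ on $(0,\ell/2)$.

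For (\ref{item:pumpkin-dumbbell-thickness}): if $m'>m$, apply Theorem~\ref{thm:transferring_vol}(\ref{item:symm_parallel}) to $\pdb{\ell_1}{\ell_2}{m'}$, symmetrising both outer pumpkins simultaneously from $m'$ down to $m$ edges --- legitimate since the longitudinal eigenfunction is monotone along each such edge --- to get $\lambda_2^N(\pdb{\ell_1}{\ell_2}{m})\le\lambda_2^N(\pdb{\ell_1}{\ell_2}{m'})$, with equality excluded by the equality characterisation there ($\psi$ is non-constant on the pumpkins and $\NewGraph\ne\Graph$). Part (\ref{item:pumpkin-dumbbell-min}) then follows at once: for $\ell$ fixed, (\ref{item:pumpkin-dumbbell-balance}) and its mirror single out $\ell_1=\ell_2=\ell/2$, and (\ref{item:pumpkin-dumbbell-thickness}) singles out $m=2$. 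Finally, \eqref{eq:dumbbell_apriori_bounds} comes from (\ref{item:pumpkin-dumbbell-size}): $\lambda_2^N\ge\lambda_2^N(\pdb{0}{0}{m})=\pi^2/L^2$ (a path of length $L$), with equality only if $\ell_1=\ell_2=0$; and $\lambda_2^N$ is maximal when $\ell_1+\ell_2=L$, in which case the handle degenerates, the resulting $[m,m]$-pumpkin chain has longitudinal part a Neumann interval of length $L/m$, and all its transversal eigenvalues are also at least $\pi^2m^2/L^2$, so $\lambda_2^N=\pi^2m^2/L^2$ precisely on that edge of $T$. The main obstacle is the reflection argument behind (\ref{item:pumpkin-dumbbell-balance}): the Hadamard computation reduces the monotonicity cleanly to the sign of $|\psi(v_-)|-|\psi(v_+)|$, but pinning that sign down requires a careful location of the nodal point of $\psi$, and because the two outer pumpkins have different sizes only one of the two candidate reflections yields a comparison graph all of whose edges are shortened.
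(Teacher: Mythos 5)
Your proof is essentially correct and follows the same overall architecture as the paper's: Lemma~\ref{lem:simple-chain} for simplicity and monotonicity of the eigenfunction, symmetrisation via Theorem~\ref{thm:transferring_vol}(\ref{item:symm_parallel}) for part~(\ref{item:pumpkin-dumbbell-thickness}), the Hadamard formula plus a reflection/zero-location argument for part~(\ref{item:pumpkin-dumbbell-balance}), and the same deductions for part~(\ref{item:pumpkin-dumbbell-min}) and \eqref{eq:dumbbell_apriori_bounds}. Where you genuinely diverge is part~(\ref{item:pumpkin-dumbbell-size}): the paper inserts dummy vertices on the larger pumpkin, glues them (Corollary~\ref{cor:joining_points}) and unfolds via Theorem~\ref{thm:transferring_vol}(\ref{item:unfolding_parallel}), whereas you differentiate directly and obtain $\tfrac{d}{d\ell_1}\lambda_2^N=\mathscr{E}_{e_0}-\mathscr{E}_{e_p}=(m^2-1)\bigl[\nd{\psi}{e_p}{v_1}\bigr]^2>0$ from the Kirchhoff relation at $v_1$. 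This is valid (the eigenvalue is simple away from the degenerate corner, and \eqref{eq:hadamard_formula} extends linearly to simultaneous changes of several edge lengths, exactly as in \eqref{eq:simple-hadamard}); your observation that $\mathscr{E}_{e_p}=\lambda\,\psi(v_\pm)^2$ on the terminal pumpkin edges, because the natural condition at $v_\pm$ forces $\nd{\psi}{e_p}{v_\pm}=0$ for the longitudinal eigenfunction, is a clean simplification the paper does not exploit.

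Two caveats. First, in part~(\ref{item:pumpkin-dumbbell-balance}) the assertion that comparing $|\psi(v_-)|$ with $|\psi(v_+)|$ is ``equivalent'' to locating the zero is not automatic, since the two halves carry different weights: you need the transfer identity along the handle, $\lambda\psi(v_-)^2=\tfrac{1}{m^2}\mathscr{E}_{e_0}+\bigl(1-\tfrac1{m^2}\bigr)\lambda\psi(v_1)^2$ and the analogous identity at $v_2$, $v_+$, to convert ``the zero is closer to $v_2$'' into $|\psi(v_-)|>|\psi(v_+)|$. This is exactly the computation the paper carries out at $v_1,v_2$, and with your terminal-amplitude formula it is a one-line addition, so it is a compressed step rather than an error; likewise, the reflection argument you invoke leaves the case where the zero lies on a pumpkin rather than on the handle implicit, but so does the paper's own proof. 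Second, watch the sign you actually obtain: for $\ell_1<\ell_2$ the derivative is $\lambda\bigl(\psi(v_+)^2-\psi(v_-)^2\bigr)<0$, i.e.\ $\lambda_2^N$ strictly \emph{decreases} as $\ell_1$ increases towards $\ell/2$. This is what part~(\ref{item:pumpkin-dumbbell-min}) (unique minimum at the balanced dumbbell) and the paper's own argument deliver, and it is at odds with the literal wording ``increasing on $(0,\ell/2)$'' in item~(\ref{item:pumpkin-dumbbell-balance}); your write-up reproduces this tension --- claiming simultaneously the ``asserted'' monotonicity and that the balanced configuration is the minimiser --- rather than flagging and resolving it.
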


\begin{proof}
As in the proof of Proposition~\ref{prop:pumpkin-on-a-stick}, the statements about 
simplicity and the properties of the eigenfunction were proved in 
Lemma~\ref{lem:simple-chain}, while the statements about continuity are standard.

(\ref{item:pumpkin-dumbbell-size}) The proof is essentially the same as the proof of 
monotonicity in Proposition~\ref{prop:pumpkin-on-a-stick}(\ref{item:pumpkin-stick-size}). 
Suppose $0 \leq \ell_1 < \ell_1' \leq L-\ell_2$. We consider $\pdb{\ell_1'}{\ell_2}{m}$ 
and glue the $m$ points (treated as dummy vertices) on the 
pumpkin of length $\ell_1'$ at distance $(\ell_1'-\ell_1)/m > 0$ from its vertex. Since 
the eigenfunction is longitudinal, this does not change $\lambda_2^N$. An application of 
Theorem~\ref{thm:transferring_vol}(\ref{item:unfolding_parallel}) or 
(\ref{item:symm_parallel}) to the $m$ obtained parallel edges 
transforms the graph into $\pdb{\ell_1}{\ell_2}{m}$ and decreases the eigenvalue 
strictly since the eigenfunction only has isolated zeros. This proves the statement. 
Obviously, we may interchange the roles of $\ell_1$ and $\ell_2$ if we wish.

(\ref{item:pumpkin-dumbbell-balance}) Here we will use the Hadamard formula in the form 
of \eqref{eq:simple-hadamard} as in the proof of 
Proposition~\ref{prop:pumpkin-on-a-stick}(\ref{item:pumpkin-stick-distance}). Suppose 
that $m$ and $\ell_1+\ell_2$ are fixed with $\ell_1 < \ell_2$ and denote by $e_1$ and 
$e_2$ any of the $m$ edges of the pumpkins adjacent to $v_-$ and $v_+$, respectively, 
so that $|e_1|=\ell_1/m$ and $|e_2|=\ell_2/m$. As depicted in Figure~\ref{fig:pumpkin-dumbbell}, 
we will write $e_0$ for the handle joining the two pumpkins and $v_1$, $v_2$ for its incident 
vertices, where $v_1$ is closer to $v_-$ and $v_2$ is closer to $v_+$.

Exactly as in the proof of Proposition~\ref{prop:pumpkin-on-a-stick}, by the Hadamard 
formula, it suffices to prove
\begin{equation}
\label{eq:dumbbell-energy}
	\mathscr{E}_{e_1} > \mathscr{E}_{e_2}
\end{equation}
for the graph $\pdb{\ell_1}{\ell_2}{m})$ if $0<\ell_1<\ell_2$, where, again, 
$\mathscr{E}_{e_i}$ is defined in \eqref{eq:hadamard_formula}. Indeed, this implies 
that shortening \emph{all} edges of the longer pumpkin and lengthening \emph{all} 
the edges of the shorter pumpkin by the same amount will always (strictly) lower 
$\lambda_2^N$.

Now a similar symmetry argument to the one given in the proof of 
Proposition~\ref{prop:pumpkin-on-a-stick}(\ref{item:pumpkin-stick-distance}) shows 
that the zero set $\{\psi=0\}$ 
is closer to $v_2$ than to $v_1$. To this end, we first claim that $\dist(\{\psi=0\},v_2) 
< \dist(\{\psi=0\},v_1)$: since $\lambda_2^N(\pdb{\ell_1}{\ell_2}{m}) = \lambda_1 
(\{\psi\leq 0\})$, if this were not true we could reflect $\{\psi\leq 0\}$ across 
$\{\psi=0\}$ to create a dumbbell with strictly shorter handle and/or second loop 
$e_2$ but the same eigenvalue $\lambda_2^N$. This is then a contradiction to the 
fact that lengthening an edge strictly decreases $\lambda_2^N$.

As before, it follows from the fact that $\psi$ is a monotonic sinusoidal curve on each 
edge that $|\psi(v_1)|>|\psi(v_2)|$. Using the definition of the Pr\"ufer amplitude 
$\mathscr{E}_e$, the vertex conditions and the independence of $\psi$ from the parallel 
edges in question as before, we then obtain, writing $\lambda$ for the eigenvalue,
\begin{multline*}
	m\left[\nd{\psi}{e_1}{v_1}\right]^2 + \lambda\psi|_{e_1}(v_1)^2 
	= \left[\nd{\psi}{e_0}{v_1}\right]^2 + \lambda\psi|_{e_0}(v_1)^2 \\
	= \left[\nd{\psi}{e_0}{v_2}\right]^2 + \lambda\psi|_{e_0}(v_2)^2 
	= m\left[\nd{\psi}{e_2}{v_2}\right]^2 + \lambda\psi|_{e_2}(v_2)^2
\end{multline*}
which, when combined with $\psi|_{e_1}(v_1)^2 > \psi|_{e_2} 
(v_2)^2$ as shown earlier, implies \eqref{eq:dumbbell-energy}.

(\ref{item:pumpkin-dumbbell-thickness}) This is, again, a direct consequence of 
Theorem~\ref{thm:transferring_vol}(\ref{item:symm_parallel}) 
applied to each of the pumpkins.

(\ref{item:pumpkin-dumbbell-min}) This follows immediately from 
(\ref{item:pumpkin-dumbbell-balance}) together with the observations that, for fixed $\ell 
\in (0,L)$, $\lambda_2^N (\pdb{\ell}{\ell-\ell_1}{m}) = \lambda_2^N (\pdb{\ell - 
\ell_1}{\ell}{m})$, and that $(\ell_1,\ell-\ell_1) \mapsto \lambda_2^N (\pdb{\ell}{\ell 
-\ell_1}{m})$ is continuous as $\ell_1 \to 0$ or $\ell_1 \to \ell/2$.

Finally, the bounds \eqref{eq:dumbbell_apriori_bounds} follow, analogously to the 
proof of Proposition~\ref{prop:pumpkin-on-a-stick}, from the (strict) monotonicity in 
(\ref{item:pumpkin-dumbbell-size}): starting from an arbitrary given dumbbell, 
unfolding to produce a path yields the first inequality, while expanding the pumpkins 
until the handle disappears yields the second.
\end{proof}

\subsection{Pumpkin chains with a Dirichlet vertex}
\label{sec:dirichlet-chain}

Our third result, another application of the Hadamard-type formula, is for a slightly 
larger class of graphs. We will consider locally equilateral pumpkin chains $\mathcal{P}$ 
having constituent pumpkins $\mathcal{P}_1,\ldots, \mathcal{P}_n$ (given in 
sequence, i.e., such that $\mathcal{P}_i$ has $\mathcal{P}_{i-1}$ and 
$\mathcal{P}_{i+1}$ as its neighbours). We will assume that one of the terminal 
vertices, say $v_-$, the one adjacent to $\mathcal{P}_1$, has a Dirichlet condition, 
while at all the others we have the usual natural condition; see 
Figures~\ref{fig:shifting-pumpkins} and~\ref{fig:shifting-pumpkins-conclusion}. Here 
we consider the associated smallest eigenvalue $\lambda_1^D 
(\mathcal{P})>0$.\footnote{Recall our notational convention given in 
\eqref{eq:dirichlet-eigenvalues}.} The following result states that swapping 
the order of pumpkins to move the fatter ones further away from the Dirichlet point 
decreases $\lambda_1^D$.
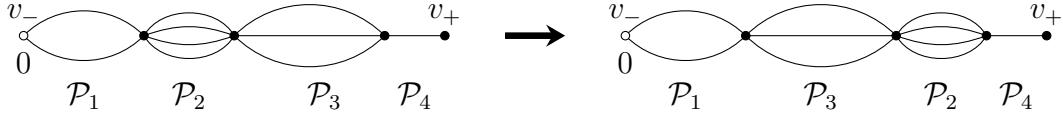
\begin{figure}[H]
\begin{tikzpicture}[scale=0.8]
\coordinate (a) at (0,0);
\coordinate (b) at (2,0);
\coordinate (c) at (3.5,0);
\coordinate (d) at (6,0);
\coordinate (e) at (7,0);
\coordinate (f) at (10,0);
\coordinate (g) at (12,0);
\coordinate (h) at (14.5,0);
\coordinate (i) at (16,0);
\coordinate (j) at (17,0);
\draw[fill] (b) circle (2pt);
\draw[fill] (c) circle (2pt);
\draw[fill] (d) circle (2pt);
\draw[fill] (e) circle (2pt);
\draw[fill] (g) circle (2pt);
\draw[fill] (h) circle (2pt);
\draw[fill] (i) circle (2pt);
\draw[fill] (j) circle (2pt);
\draw[bend left=45] (a) edge (b);
\draw[bend right=45] (a) edge (b);
\draw[bend left=20] (b) edge (c);
\draw[bend left=60] (b) edge (c);
\draw[bend right=20] (b) edge (c);
\draw[bend right=60] (b) edge (c);
\draw[bend left=45] (c) edge (d);
\draw (c) -- (d);
\draw[bend right=45] (c) edge (d);
\draw (d) -- (e);
\node at (0,-0.1) [anchor=north] {$0$};
\draw[bend left=45] (f) edge (g);
\draw[bend right=45] (f) edge (g);
\draw[bend left=20] (h) edge (i);
\draw[bend left=60] (h) edge (i);
\draw[bend right=20] (h) edge (i);
\draw[bend right=60] (h) edge (i);
\draw[bend left=45] (g) edge (h);
\draw (g) -- (h);
\draw[bend right=45] (g) edge (h);
\draw (i) -- (j);
\node at (10,-0.1) [anchor=north] {$0$};
\node at (1,-0.6) [anchor=north] {$\mathcal{P}_1$};
\node at (2.75,-0.6) [anchor=north] {$\mathcal{P}_2$};
\node at (5,-0.6) [anchor=north] {$\mathcal{P}_3$};
\node at (6.5,-0.6) [anchor=north] {$\mathcal{P}_4$};
\node at (11,-0.6) [anchor=north] {$\mathcal{P}_1$};
\node at (15.25,-0.6) [anchor=north] {$\mathcal{P}_2$};
\node at (13.25,-0.6) [anchor=north] {$\mathcal{P}_3$};
\node at (16.5,-0.6) [anchor=north] {$\mathcal{P}_4$};
\draw[-{Stealth[scale=0.5,angle'=60]},line width=2.5pt] (8,0) -- (9,0);
\node at (a) [anchor=south] {$v_-$};
\node at (e) [anchor=south] {$v_+$};
\node at (f) [anchor=south] {$v_-$};
\node at (j) [anchor=south] {$v_+$};
\draw[black,fill=white] (a) circle (2pt);
\draw[black,fill=white] (f) circle (2pt);
\end{tikzpicture}
\caption{Swapping the pumpkins $\mathcal{P}_2$ on four edges and 
$\mathcal{P}_3$ on three edges decreases $\lambda_1^D$\ldots}
\label{fig:shifting-pumpkins}
\end{figure}

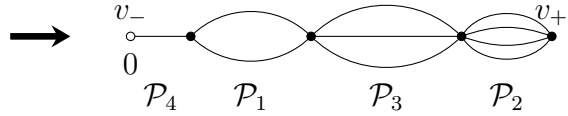
\begin{figure}[H]
\begin{tikzpicture}[scale=0.8]
\draw[-{Stealth[scale=0.5,angle'=60]},line width=2.5pt] (-2,0) -- (-1,0);
\coordinate (a) at (0,0);
\coordinate (b) at (1,0);
\coordinate (c) at (3,0);
\coordinate (d) at (5.5,0);
\coordinate (e) at (7,0);
\draw[fill] (b) circle (2pt);
\draw[fill] (c) circle (2pt);
\draw[fill] (d) circle (2pt);
\draw[fill] (e) circle (2pt);
\draw (a) -- (b);
\draw[bend left=45] (b) edge (c);
\draw[bend right=45] (b) edge (c);
\draw[bend left=45] (c) edge (d);
\draw (c) -- (d);
\draw[bend right=45] (c) edge (d);
\draw[bend left=20] (d) edge (e);
\draw[bend left=60] (d) edge (e);
\draw[bend right=20] (d) edge (e);
\draw[bend right=60] (d) edge (e);
\node at (0,-0.1) [anchor=north] {$0$};
\node at (a) [anchor=south] {$v_-$};
\node at (e) [anchor=south] {$v_+$};
\node at (0.5,-0.6) [anchor=north] {$\mathcal{P}_4$};
\node at (2,-0.6) [anchor=north] {$\mathcal{P}_1$};
\node at (4.25,-0.6) [anchor=north] {$\mathcal{P}_3$};
\node at (6.25,-0.6) [anchor=north] {$\mathcal{P}_2$};
\draw[black,fill=white] (a) circle (2pt);
\end{tikzpicture}
\caption{\ldots and so on.  The $0$ indicates the unique Dirichlet vertex, $v_-$, drawn in white.}
\label{fig:shifting-pumpkins-conclusion}
\end{figure}

\begin{proposition}
\label{prop:shifting-pumpkins}
Suppose that the locally equilateral pumpkin chain $\mathcal{P}$ consists 
of pumpkins $\mathcal{P}_1,\ldots,\mathcal{P}_n$, $n\geq 2$, such that 
$\mathcal{P}_1$ and $\mathcal{P}_n$ are the terminal pumpkins with a 
Dirichlet condition being imposed at the terminal vertex $v_-$ of 
$\mathcal{P}_1$, natural conditions at all the other vertices, 
and the $\mathcal{P}_i$ are ordered by increasing 
distance from $v_-$ (as depicted in Figure~\ref{fig:shifting-pumpkins}). 
Suppose further that for some $i=1,\ldots,n-1$, $\mathcal{P}_i$ has $m_i 
\geq 2$ parallel edges and $\mathcal{P}_{i+1}$ has $1 \leq m_{i+1} < m_i$ 
parallel edges. Denote by $\widetilde{\mathcal{P}}$ the corresponding pumpkin 
chain obtained from $\mathcal P$ by exchanging $\mathcal P_i$ and $\mathcal P_{i+1}$, 
i.e., consisting of the pumpkins $\mathcal{P}_1,\ldots,\mathcal{P}_{i-1}, 
\mathcal{P}_{i+1}, \mathcal{P}_i, \mathcal{P}_{i+2}, \ldots, \mathcal{P}_n$ 
listed in order of increasing distance from $v_-$. Then we have
\begin{displaymath}
	\lambda_1^D (\mathcal{P}) > \lambda_1^D (\widetilde{\mathcal{P}}).
\end{displaymath}
\end{proposition}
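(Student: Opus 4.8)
The plan is to realise the exchange of $\mathcal P_i$ and $\mathcal P_{i+1}$ as a continuous deformation through locally equilateral pumpkin chains of the same total length, and to use the Hadamard-type formula \eqref{eq:hadamard_formula}--\eqref{eq:simple-hadamard} to show that $\lambda_1^D$ strictly decreases along it, in the same spirit as the proofs of Proposition~\ref{prop:pumpkin-on-a-stick}(\ref{item:pumpkin-stick-distance}) and Proposition~\ref{prop:pumpkin-dumbbell}(\ref{item:pumpkin-dumbbell-balance}).

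To set up the deformation, write $m:=m_{i+1}$, let $\ell_i$, $\ell_{i+1}$ be the total lengths of $\mathcal P_i$, $\mathcal P_{i+1}$, and put $w:=\ell_{i+1}/m$. For $t\in[0,w]$ let $\mathcal G_t$ be the locally equilateral pumpkin chain obtained from $\mathcal P$ by keeping $\mathcal P_1,\dots,\mathcal P_{i-1}$ and $\mathcal P_{i+2},\dots,\mathcal P_n$ as they are and replacing the sub-chain ``$\mathcal P_i$ then $\mathcal P_{i+1}$'' (listed by increasing distance from $v_-$) by ``$\mathcal A_t$ then $\mathcal P_i$ then $\mathcal C_t$'', where $\mathcal A_t$, $\mathcal C_t$ are $m$-pumpkins all of whose edges have length $t$, respectively $w-t$, inserted on the $v_-$-side and the $v_+$-side of $\mathcal P_i$; the Dirichlet condition is retained at $v_-$ and all other conditions are natural. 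Since $|\mathcal A_t|+|\mathcal C_t|=\ell_{i+1}$ for all $t$, each $\mathcal G_t$ has the same total length as $\mathcal P$, and because $\mathcal A_0$ and $\mathcal C_w$ are degenerate one checks $\mathcal G_0=\mathcal P$ and $\mathcal G_w=\widetilde{\mathcal P}$. By the stability of the spectrum of a quantum graph under changes of the edge lengths, including the degeneration of an edge to a point (used already in the proof of Proposition~\ref{prop:pumpkin-on-a-stick}), $t\mapsto\lambda_1^D(\mathcal G_t)$ is continuous on $[0,w]$; and for $t\in(0,w)$ the graph $\mathcal G_t$ is a locally equilateral pumpkin chain carrying a Dirichlet condition only at the terminal vertex $v_-$ and is not a pumpkin, so by Lemma~\ref{lem:simple-chain} the eigenvalue $\lambda=\lambda_1^D(\mathcal G_t)$ is simple and its eigenfunction $\psi$ may be taken longitudinal, strictly positive away from $v_-$, and monotonically increasing along the chain.

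The heart of the argument is to show $\frac{d}{dt}\lambda_1^D(\mathcal G_t)<0$ on $(0,w)$. Increasing $t$ lengthens each of the $m$ edges of $\mathcal A_t$ and shortens each of the $m$ edges of $\mathcal C_t$ at unit rate; pairing these edges and summing the contributions in \eqref{eq:simple-hadamard}, $\frac{d}{dt}\lambda_1^D(\mathcal G_t)$ has the same sign as $\mathscr E_{e^+}-\mathscr E_{e^-}$, where $e^-$, $e^+$ denote a generic edge of $\mathcal A_t$, $\mathcal C_t$ and $\mathscr E$ is the Pr\"ufer amplitude of \eqref{eq:hadamard_formula}, so it suffices to prove $\mathscr E_{e^-}>\mathscr E_{e^+}$. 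Let $v_1^\ast$, $v_2^\ast$ be the two vertices of $\mathcal P_i$, with $v_1^\ast$ shared with $\mathcal A_t$ (hence nearer $v_-$) and $v_2^\ast$ shared with $\mathcal C_t$; both have natural conditions. Since $\psi\ge 0$ and, being a non-constant solution of $-\psi''=\lambda\psi$ with $\lambda>0$ that is monotone on each edge of $\mathcal P_i$, strictly increasing there, $0\le\psi(v_1^\ast)<\psi(v_2^\ast)$, so $\psi(v_1^\ast)^2<\psi(v_2^\ast)^2$. Because $\psi$ is longitudinal it has the same values and normal derivatives on all $m_i$ edges of $\mathcal P_i$, so the Kirchhoff condition at $v_1^\ast$ gives $[\nd{\psi}{e_{\mathcal P_i}}{v_1^\ast}]^2=(m/m_i)^2[\nd{\psi}{e^-}{v_1^\ast}]^2$ and similarly at $v_2^\ast$ with $e^+$; substituting these into the identity $[\nd{\psi}{e_{\mathcal P_i}}{v_1^\ast}]^2+\lambda\psi(v_1^\ast)^2=[\nd{\psi}{e_{\mathcal P_i}}{v_2^\ast}]^2+\lambda\psi(v_2^\ast)^2$ (constancy of the Pr\"ufer amplitude along $\mathcal P_i$) and setting $\kappa:=(m/m_i)^2<1$ yields $[\nd{\psi}{e^-}{v_1^\ast}]^2-[\nd{\psi}{e^+}{v_2^\ast}]^2=\tfrac{\lambda}{\kappa}\bigl(\psi(v_2^\ast)^2-\psi(v_1^\ast)^2\bigr)$, so that, evaluating each amplitude at its end adjacent to $\mathcal P_i$,
\begin{displaymath}
  \mathscr E_{e^-}-\mathscr E_{e^+}=\Bigl(\tfrac1\kappa-1\Bigr)\lambda\bigl(\psi(v_2^\ast)^2-\psi(v_1^\ast)^2\bigr)>0 ,
\end{displaymath}
because $\kappa<1$, $\lambda>0$ and $\psi(v_2^\ast)>\psi(v_1^\ast)\ge 0$. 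Hence $\frac{d}{dt}\lambda_1^D(\mathcal G_t)<0$ for all $t\in(0,w)$, and by continuity $\lambda_1^D(\mathcal P)=\lambda_1^D(\mathcal G_0)>\lambda_1^D(\mathcal G_w)=\lambda_1^D(\widetilde{\mathcal P})$, as claimed.

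I expect the main obstacle to be the Pr\"ufer-amplitude comparison $\mathscr E_{e^-}>\mathscr E_{e^+}$: as in the cited proofs it rests on the Kirchhoff conditions, but it closes cleanly here only because the jump of the squared normal derivatives across $\mathcal P_i$ can be eliminated \emph{exactly} in favour of the change of $\psi^2$, leaving the prefactor $\tfrac1\kappa-1=(m_i^2-m_{i+1}^2)/m_{i+1}^2$, which is positive precisely by the hypothesis $m_{i+1}<m_i$. A secondary point is that the swap cannot be a naive continuous deformation because the number of parallel edges jumps between $\mathcal P_i$ and $\mathcal P_{i+1}$; this is circumvented by the two ``buffer'' $m_{i+1}$-pumpkins $\mathcal A_t$, $\mathcal C_t$, which carry the common edge count and degenerate at the two ends of the parameter range.
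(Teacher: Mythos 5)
Your proposal is correct and follows essentially the same route as the paper's own proof: the swap is realised as a one-parameter family in which a pumpkin with $m_{i+1}$ edges sits on both sides of $\mathcal P_i$ (the paper's ``clone'' $\mathcal P_i'$, your buffers $\mathcal A_t,\mathcal C_t$) and degenerates at the two endpoints, with continuity under edge shrinking, the Hadamard formula \eqref{eq:simple-hadamard}, and the Kirchhoff-plus-Pr\"ufer-amplitude comparison across $\mathcal P_i$ doing the work. Your explicit identity $\mathscr E_{e^-}-\mathscr E_{e^+}=\bigl(\tfrac1\kappa-1\bigr)\lambda\bigl(\psi(v_2^\ast)^2-\psi(v_1^\ast)^2\bigr)$ is just a cleanly packaged version of the paper's inequality $\mathscr{E}_{e_{i-1}}>\mathscr{E}_{e_{i+1}}$.
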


Thus if the set of constituent pumpkins is fixed, then the pumpkin chain 
minimising $\lambda_1^D$ orders them by increasing thickness away from the 
Dirichlet vertex, cf.\ Figure~\ref{fig:shifting-pumpkins-conclusion}.

\begin{remark}
\begin{enumerate}
\item The assertion of Proposition~\ref{prop:shifting-pumpkins} still holds if we 
replace the Dirichlet condition at $v_-$ by a $\delta$ condition with positive strength 
$\gamma$. The proof is identical and we do not go into details.
\item If natural conditions are imposed on all vertices of $\mathcal P$, then one 
can apply Proposition~\ref{prop:shifting-pumpkins} to either of the nodal domains 
$\mathcal P^\pm$ of the eigenfunction associated with $\lambda_2^N (\mathcal P)$ 
and deduce an analogous result as in Proposition~\ref{prop:shifting-pumpkins}, 
namely comparison with a pumpkin chain $\mathcal P'$ where fatter pumpkins have 
been moved towards the endpoints $v_-,v_+$.
\end{enumerate}
\end{remark}

\begin{proof}[Proof of Proposition~\ref{prop:shifting-pumpkins}]
The eigenvalue $\lambda_1^D (\mathcal{P})$ is simple, and by 
Lemma~\ref{lem:simple-chain}, if 
its associated eigenfunction $\psi$ is chosen positive, then it is monotonically 
increasing from $v_-$ to $v_+$ and invariant under permutations of the edges 
within any pumpkin, i.e., $\psi(x)$ depends only on $\dist (x,v_-)$.

We claim that it is sufficient to show that if $\mathcal{P}_{i-1}, \mathcal{P}_i, 
\mathcal{P}_{i+1}$ are consecutive pumpkins in such a pumpkin chain with 
$m_{i-1}, m_i, m_{i+1}$ edges, respectively, such that $m_{i-1}=m_{i+1} < m_i$, then 
shortening the edges of $\mathcal{P}_{i-1}$ and lengthening those of 
$\mathcal{P}_{i+1}$ for a fixed total length always (strictly) decreases $\lambda_1^D$ 
(and vice versa). Indeed, we may view the graphs $\mathcal{P}$ and 
$\widetilde{\mathcal{P}}$ as those obtained respectively by passing to the limit as 
$\mathcal{P}_{i+1}$ shrinks to a point, and as $\mathcal{P}_{i-1}$ shrinks to a point. 
Since the eigenvalue is continuous with respect to passing to these limits (we again refer 
to \cite[Appendix~A]{BanLev_ahp17} or \cite{BeLaSu_prep18}), it follows that 
$\lambda_1^D (\mathcal{P}) > \lambda_1^D (\widetilde{\mathcal{P}})$.

\begin{figure}[H]
\begin{tikzpicture}[scale=0.6]
\coordinate (a) at (0,0);
\coordinate (b) at (2,0);
\coordinate (c) at (3.5,0);
\coordinate (d) at (6,0);
\coordinate (e) at (7,0);
\draw[fill] (b) circle (2pt);
\draw[fill] (c) circle (2pt);
\draw[fill] (d) circle (2pt);
\draw[fill] (e) circle (2pt);
\coordinate (f) at (10,0);
\coordinate (g) at (12,0);
\coordinate (z) at (12.7,0);
\coordinate (h) at (15,0);
\coordinate (i) at (16,0);
\coordinate (j) at (17,0);
\draw[fill] (g) circle (2pt);
\draw[fill] (h) circle (2pt);
\draw[fill] (i) circle (2pt);
\draw[fill] (j) circle (2pt);
\draw[fill] (z) circle (2pt);
\coordinate (l) at (20,0);
\coordinate (m) at (22,0);
\coordinate (n) at (24.5,0);
\coordinate (o) at (26,0);
\coordinate (p) at (27,0);
\draw[fill] (m) circle (2pt);
\draw[fill] (n) circle (2pt);
\draw[fill] (o) circle (2pt);
\draw[fill] (p) circle (2pt);
\draw[bend left=45] (a) edge (b);
\draw[bend right=45] (a) edge (b);
\draw[bend left=20] (b) edge (c);
\draw[bend left=60] (b) edge (c);
\draw[bend right=20] (b) edge (c);
\draw[bend right=60] (b) edge (c);
\draw[bend left=45] (c) edge (d);
\draw (c) -- (d);
\draw[bend right=45] (c) edge (d);
\draw (d) -- (e);
\node at (0,-0.1) [anchor=north] {$0$};
\draw[bend left=20] (g) edge (z);
\draw[bend left=60] (g) edge (z);
\draw[bend right=20] (g) edge (z);
\draw[bend right=60] (g) edge (z);
\draw[bend left=45] (f) edge (g);
\draw[bend right=45] (f) edge (g);
\draw[bend left=20] (h) edge (i);
\draw[bend left=60] (h) edge (i);
\draw[bend right=20] (h) edge (i);
\draw[bend right=60] (h) edge (i);
\draw[bend left=45] (z) edge (h);
\draw (z) -- (h);
\draw[bend right=45] (z) edge (h);
\draw (i) -- (j);
\draw[bend left=45] (l) edge (m);
\draw[bend right=45] (l) edge (m);
\draw[bend left=20] (n) edge (o);
\draw[bend left=60] (n) edge (o);
\draw[bend right=20] (n) edge (o);
\draw[bend right=60] (n) edge (o);
\draw[bend left=45] (m) edge (n);
\draw (m) -- (n);
\draw[bend right=45] (m) edge (n);
\draw (o) -- (p);
\node at (10,-0.1) [anchor=north] {$0$};
\node at (1,-0.6) [anchor=north] {$\mathcal{P}_1$};
\node at (2.75,-0.6) [anchor=north] {$\mathcal{P}_2$};
\node at (5,-0.6) [anchor=north] {$\mathcal{P}_3$};
\node at (6.5,-0.6) [anchor=north] {$\mathcal{P}_4$};
\draw[-{Stealth[scale=0.5,angle'=60]},line width=2.5pt] (8,0) -- (9,0);
\node at (11,-0.6) [anchor=north] {$\mathcal{P}_1$};
\node at (14,-0.6) [anchor=north] {$\mathcal{P}_3$};
\node at (12.45,-0.6) [anchor=north] {$\mathcal{P}_2$};
\node at (15.45,-0.6) [anchor=north] {$\mathcal{P}'_2$};
\node at (16.5,-0.6) [anchor=north] {$\mathcal{P}_4$};
\draw[-{Stealth[scale=0.5,angle'=60]},line width=2.5pt] (18,0) -- (19,0);
\node at (20,-0.1) [anchor=north] {$0$};
\node at (21,-0.6) [anchor=north] {$\mathcal{P}_1$};
\node at (23.25,-0.6) [anchor=north] {$\mathcal{P}_3$};
\node at (25.2,-0.6) [anchor=north] {$\mathcal{P}'_2$};
\node at (26.5,-0.6) [anchor=north] {$\mathcal{P}_4$};
\node at (a) [anchor=south] {$v_-$};
\node at (e) [anchor=south] {$v_+$};
\node at (f) [anchor=south] {$v_-$};
\node at (j) [anchor=south] {$v_+$};
\node at (l) [anchor=south] {$v_-$};
\node at (p) [anchor=south] {$v_+$};
\draw[black,fill=white] (a) circle (2pt);
\draw[black,fill=white] (f) circle (2pt);
\draw[black,fill=white] (l) circle (2pt);
\end{tikzpicture}
\caption{The pumpkin $\mathcal P'_2$ is a clone of $\mathcal P_2$.}\label{fig:shifting-pumpkins-3}
\end{figure}
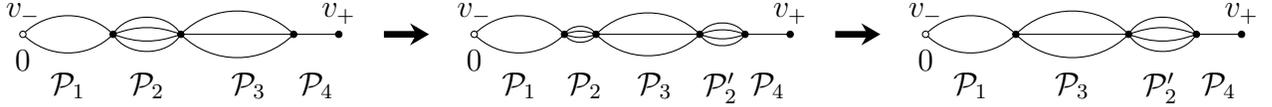

Denote by $e_{i-1}$, $e_i$, $e_{i+1}$ any of the parallel edges of 
$\mathcal{P}_{i-1}$, $\mathcal{P}_{i}$, $\mathcal{P}_{i+1}$, respectively, and 
suppose that $v_i$ is the vertex between $\mathcal{P}_{i-1}$ and $\mathcal{P}_i$, 
and $v_{i+1}$ is the vertex between $\mathcal{P}_i$ and $\mathcal{P}_{i+1}$. Then 
by \eqref{eq:simple-hadamard}, to prove the proposition we only need to show that, 
in the situation just described,
\begin{displaymath}
	\mathscr{E}_{e_{i-1}} > \mathscr{E}_{e_{i+1}}.
\end{displaymath}
The argument is thus reduced to one similar to those given in the proofs of 
Proposition~\ref{prop:pumpkin-on-a-stick}(\ref{item:pumpkin-stick-distance}) and 
Proposition~\ref{prop:pumpkin-dumbbell}(\ref{item:pumpkin-dumbbell-balance}): we 
note that, firstly, $0 < \psi (v_{i-1}) < \psi (v_{i+1})$ since $\psi$ is monotonically 
increasing and non-constant along the pumpkin chain, and secondly, writing $\lambda$ 
for the eigenvalue $\lambda_1^D$ of the pumpkin chain in question,
\begin{equation}
\label{eq:pumpkin-in-the-middle}
	\left[\nd{\psi}{e_i}{v_{i}}\right]^2 + \lambda \psi (v_{i})^2 = \mathscr{E}_{e_i}
	= \left[\nd{\psi}{e_i}{v_{i+1}}\right]^2 + \lambda \psi (v_{i+1})^2.
\end{equation}
The Kirchhoff condition at $v_i$ and $v_{i+1}$, together with the fact that $\psi$ 
is equal on all edges of a given pumpkin, then reads
\begin{displaymath}
	m_{i-1} \left|\nd{\psi}{e_{i-1}}{v_i}\right| = m_i \left|\nd{\psi}{e_i}{v_i}
	\right|, \qquad m_i \left|\nd{\psi}{e_i}{v_{i+1}}\right| 
	= m_{i+1} \left|\nd{\psi}{e_{i+1}}{v_{i+1}}\right|;
\end{displaymath}
inserting this into \eqref{eq:pumpkin-in-the-middle} implies
\begin{displaymath}
	\frac{m_{i-1}}{m_i} \left[\nd{\psi}{e_{i-1}}{v_i}\right]^2 + 
	\lambda \psi(v_i)^2 = \frac{m_{i+1}}{m_i} 
	\left[\nd{\psi}{e_{i+1}}{v_{i+1}}\right]^2 + \lambda \psi(v_{i+1})^2.
\end{displaymath}
Recalling that $0 < m_{i-1}=m_{i+1} < m_i$ and $\psi(v_i)^2 < \psi(v_{i+1})^2$, this 
implies that $\left[\nd{\psi}{e_{i-1}}{v_i}\right]^2 > 
\left[\nd{\psi}{e_{i+1}}{v_{i+1}}\right]^2$ and
\begin{displaymath}
	\mathscr{E}_{e_{i-1}} = \left[\nd{\psi}{e_{i-1}}{v_i}\right]^2 + 
	\lambda \psi(v_i)^2 > \left[\nd{\psi}{e_{i+1}}{v_{i+1}}\right]^2 + 
	\lambda \psi(v_{i+1})^2 = \mathscr{E}_{e_{i+1}},
\end{displaymath}
as required.
\end{proof}

%%%%%%%%%%%%%%%%%%%%%%%%%%%%%%%%%%%%%%%%%%%%%%%%%%%%%%%%%%%%%%%%%%%%%%%%%%%%%%
\section{The size of the doubly connected part}
\label{sec:sizeof}

We are finally in a position to give the principal application of the paper, namely the 
quantitative lower bound on $\lambda_2^N$. It involves the following quantity.

\begin{definition}
\label{def:doubly-connected-part}
  The \emph{doubly connected part} $\dcp$ of a graph $\Graph$ is the closed 
  subgraph consisting of all $x \in \Graph$ for which there is a 
  non-self-intersecting path in $\Graph$ starting and ending at $x$.
\end{definition}

The doubly connected part $\dcp$ can be obtained by deleting every bridge 
(including pendant edges), followed by the removal of any isolated vertices. It may 
also be characterised as being the largest subgraph of $\Graph$ whose every 
connected component is itself doubly edge connected.

\begin{example}
Suppose $\mathcal{D} = \db{\ell_1}{\ell_2}$ is a dumbbell having loops 
$e_1$ and $e_2$ of length $\ell_1$ and $\ell_2$, respectively. Then 
$\dcpgraph{\mathcal{D}} = e_1 \cup e_2$. More generally, if $\Graph$ is a 
pumpkin chain, then $\dcp$ consists of the (possibly disjoint) union of the 
non-trivial constituent pumpkins of $\Graph$. 
\end{example}

Our goal is to use the tools of Section~\ref{sec:tools} and the results of 
Section~\ref{sec:pumpkins} to derive a lower bound on $\lambda_2^N 
(\Graph)$ in terms of the total length $|\dcp|$ of $\dcp$ (as well as the 
length $L:=|\Graph|$ of $\Graph$).
 This bound will interpolate between 
the inequalities of Nicaise \cite[Th\'eor\`eme~3.1]{Nic_bsm87} and 
Band--L\'evy \cite[Theorem~2.1]{BanLev_ahp17}. We recall that the former 
states that for any compact graph $\Graph$ of total length $L$, we have
\begin{equation}
\label{eq:nicaise}
	\lambda_2^N (\Graph) \geq \lambda_2^N (\mathcal{I}) = 
	\frac{\pi^2}{L^2},
\end{equation}
where $\mathcal{I}$ is a path graph (interval) of length $L$; the latter 
is for \emph{doubly connected} compact graphs $\Graph$ of total length $L$, i.e., 
graphs $\Graph$ for which $\dcp = \Graph$, and reads
\begin{equation}
\label{eq:band-levy}
	\lambda_2^N (\Graph) \geq \lambda_2^N (\mathcal{C}) = 
	\frac{4\pi^2}{L^2},
\end{equation}
where $\mathcal{C}$ is a loop of the same total length 
(the same inequality was proved earlier in \cite{KurNab_jst14} 
for Eulerian graphs). Our theorem is as follows.

\begin{theorem}
\label{thm:doubly-connected}
  Suppose that the compact and connected graph 
  $\Graph$ has total length $L$, and its doubly connected part has total length 
  $V:=|\dcp| \in [0,L]$. Let $\mathcal{D} = \db{\frac{V}{2}}{\frac{V}{2}}$ be the 
  dumbbell of length $L$ having both loops of length $V/2$. Then
\begin{displaymath}
	\lambda_2^N (\Graph) \geq \lambda_2^N (\mathcal{D}).
\end{displaymath}
\end{theorem}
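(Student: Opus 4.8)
The plan is to reduce $\Graph$, through a sequence of surgery moves none of which increases $\lambda_2^N$, decreases the length of the doubly connected part, or increases the total length, to a \emph{pumpkin dumbbell} $\db{V_-}{V_+}$, and then to read off the conclusion from the monotonicity statements in Proposition~\ref{prop:pumpkin-dumbbell}. The first step is to apply Lemma~\ref{lem:pumpkinchain} to replace $\Graph$ by a pumpkin chain $\mathcal{P}$ with $\lambda_2^N(\mathcal{P})=\lambda_2^N(\Graph)$ carrying an eigenfunction $\psi$ that is monotonic along the chain; since $\mu(\Graph)\neq 0$, the unique zero of $\psi$ lies at an interior point $z$ of the chain, which after inserting a dummy vertex (Remark~\ref{rem:dummy}) I take to be a vertex. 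In the generic situation --- when $\Graph$ carries a $\lambda_2^N$-eigenfunction not vanishing identically on any edge --- this construction only inserts dummy vertices and glues vertices lying at a common critical level (Corollary~\ref{cor:joining_points}), so $|\mathcal{P}|=|\Graph|=L$; and since gluing vertices can only create new cycles, every edge of $\Graph$ lying on a cycle still lies on a cycle in $\mathcal{P}$, whence $|\dcp|_{\mathcal{P}}\geq V$.

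Next I would move volume to the periphery. Symmetrising with Theorem~\ref{thm:transferring_vol}(\ref{item:symm_parallel}), applied simultaneously to all constituent pumpkins of $\mathcal{P}$ (legitimate since $\psi$ is monotonic on each), replaces every non-trivial pumpkin by an equilateral $2$-pumpkin without increasing $\lambda_2^N$ and without changing $L$ or the size of the doubly connected part; so we may assume each constituent pumpkin is a single edge or an equilateral $2$-pumpkin. Then, by the remark immediately following Proposition~\ref{prop:shifting-pumpkins} --- which allows that proposition to be applied on each of the two nodal domains of $\psi$ (meeting only at $z$) --- I may pass to a chain $\mathcal{P}'$ with $\lambda_2^N(\mathcal{P}')\leq\lambda_2^N(\mathcal{P})$ in which all the $2$-pumpkins have been moved to the two terminal ends $v_\pm$ and all single edges lie in the middle, again preserving $L$ and the doubly connected part. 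At each end, $\mathcal{P}'$ now contains a run of consecutive equilateral $2$-pumpkins; cutting through each interior degree-four vertex of such a run --- the reverse of gluing, hence non-increasing for every eigenvalue by Theorem~\ref{thm:changing_vc}(\ref{item:gluing_vertices}) --- merges the run into a single equilateral $2$-pumpkin (the two parallel paths it produces have equal length), leaving $L$ and the doubly connected part unchanged. After suppressing the resulting degree-two vertices (Remark~\ref{rem:dummy}), the graph is a pumpkin dumbbell $\db{V_-}{V_+}$ with loops of lengths $V_-,V_+$, total length $L$, and doubly connected part of length $V':=V_-+V_+\geq V$; and $\lambda_2^N(\Graph)\geq\lambda_2^N(\db{V_-}{V_+})$.

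To conclude: since $\db{V_-}{V_+}=\pdb{V_-}{V_+}{2}$ has total length $L$ and $V_-+V_+=V'$ fixed, Proposition~\ref{prop:pumpkin-dumbbell}(\ref{item:pumpkin-dumbbell-min}) gives $\lambda_2^N(\db{V_-}{V_+})\geq\lambda_2^N(\db{V'/2}{V'/2})$, and then two applications of the monotonicity in each loop length, Proposition~\ref{prop:pumpkin-dumbbell}(\ref{item:pumpkin-dumbbell-size}) (first in $\ell_1$, then in $\ell_2$), using $V'\geq V$, give $\lambda_2^N(\db{V'/2}{V'/2})\geq\lambda_2^N(\db{V/2}{V/2})=\lambda_2^N(\mathcal{D})$, all dumbbells having total length $L$. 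Chaining the inequalities proves the theorem. (If the reduction should drop the total length to some $L''<L$, one first reinstates it: lengthening the handle of the final dumbbell back to total length $L$ only decreases $\lambda_2^N$, by Corollary~\ref{cor:increasing_vol}(\ref{item:lengthening_an_edge}).)

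The spectral inequalities above are immediate from the toolkit of Sections~\ref{sec:tools}--\ref{sec:pumpkins}; the genuine difficulty, and the step I expect to be the main obstacle, is the bookkeeping of the doubly connected part through the surgery, and in particular through the one move that can shrink it: the shrinking of a redundant edge on which $\psi$ vanishes (Corollary~\ref{cor:increasing_vol}(\ref{item:shrinking_redundant})), which Lemma~\ref{lem:pumpkinchain} needs when no eigenfunction avoids every edge, and which \emph{does} decrease $|\dcp|$ when such an edge lies on a cycle (as for a figure-$8$ with incommensurable loops, whose $\lambda_2^N$-eigenfunction is supported on the longer loop alone). Making the comparison with $\mathcal{D}$ survive this then requires an additional argument: a scaling argument covers the extreme case $V=L$ (where $V$ and $L$ shrink in the same proportion), and in general one must exploit more carefully the fact that, by the Kirchhoff condition, the restriction of $\psi$ to the set of edges on which it does not vanish is an eigenfunction of a smaller graph, together with the monotonicity of $\ell\mapsto\lambda_2^N(\db{\ell/2}{\ell/2})$ (in the loop length) and of the handle length, so as to reduce back to the generic situation treated above.
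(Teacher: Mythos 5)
Your main reduction is, in essence, the paper's own proof: pass to a pumpkin chain carrying a monotone eigenfunction (Lemma~\ref{lem:pumpkinchain}), symmetrise every non-trivial pumpkin to an equilateral $2$-pumpkin (Theorem~\ref{thm:transferring_vol}(\ref{item:symm_parallel})), move the $2$-pumpkins to the two ends by applying Proposition~\ref{prop:shifting-pumpkins} on the two nodal domains (the paper does exactly this, cutting at the zero with a Dirichlet condition and re-gluing, rather than citing the remark after that proposition), cut through the degree-four vertices to merge each terminal run into a single loop, and conclude with Proposition~\ref{prop:pumpkin-dumbbell} (like you, the paper needs both the balancing statement~(\ref{item:pumpkin-dumbbell-balance}) and the monotonicity~(\ref{item:pumpkin-dumbbell-size}) to pass from $\ell_-+\ell_+\geq V$ to $V$). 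So in the generic case your argument and the paper's coincide.

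The genuine gap is exactly the step you flag and leave open: the case where the chosen eigenfunction vanishes identically on an edge lying on a cycle, so that the shrinking of Corollary~\ref{cor:increasing_vol}(\ref{item:shrinking_redundant}) reduces $|\dcp|$. You greatly overestimate this difficulty, and the routes you sketch (scaling, analysing $\psi$ on its support via the Kirchhoff condition) are neither developed nor needed. The paper dispatches it in one sentence at the start of the proof, and the complete argument is short: if $\psi\equiv 0$ on $e\sim v_1v_2$, shrink $e$; then $\lambda_2^N$ is unchanged, $L$ drops by $|e|$, and $V$ drops by \emph{at most} $|e|$ --- not at all if $e$ is a bridge, and by exactly $|e|$ if $e$ lies on a cycle, since any cycle of the new graph through the glued vertex lifts either to a cycle of $\Graph$ or to a $v_1$--$v_2$ path avoiding $e$, whose edges already lay in $\dcp$. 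Hence the comparison dumbbell for the reduced parameters $(L',V')$ has handle $L'-V'\leq L-V$ and loops $V'/2\leq V/2$, i.e.\ it is obtained from $\mathcal{D}$ by shortening edges, so its spectral gap is at least $\lambda_2^N(\mathcal{D})$ by Corollary~\ref{cor:increasing_vol}(\ref{item:lengthening_an_edge}); applying the generic argument to the reduced graph then finishes the proof, with no scaling argument required. Incidentally, your illustrative example is wrong: for a figure-8 the spectral gap equals that of the loop of the same total length (footnote to Theorem~\ref{thm:transferring_vol}(\ref{item:unfolding_parallel}), via Corollary~\ref{cor:joining_points}), and a gap eigenfunction can be chosen not to vanish on either loop; the phenomenon does occur, however (e.g.\ for suitable eigenfunctions of an equilateral pumpkin), so the case does need the one-line treatment above.
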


Variants are possible: see Theorem~\ref{thm:doubly-connected-component} 
and Corollary~\ref{cor:circumference}. Also note that since 
$\lambda_2^N(\db{\frac{V}{2}}{\frac{V}{2}})$ is monotonically increasing with respect to 
$V$ (see Proposition~\ref{prop:pumpkin-dumbbell}(\ref{item:pumpkin-dumbbell-size})), 
for any non-tree Theorem~\ref{thm:doubly-connected} yields a strictly better estimate 
than \eqref{eq:nicaise}. Moreover, it contains \eqref{eq:nicaise} and 
\eqref{eq:band-levy} as special cases (the former corresponds to $V=0$, 
the latter to $V=L$), and thus interpolates smoothly and monotonically 
between them as $V$ ranges from $0$ to $L$.

\begin{remark}
\label{rem:nicaise}
  Actually, using Lemma~\ref{lem:pumpkinchain} and 
  Theorem~\ref{thm:transferring_vol}(\ref{item:unfolding_parallel}), we can 
  immediately sketch simpler short proofs of \eqref{eq:nicaise} and \eqref{eq:band-levy}: 
  to prove \eqref{eq:nicaise}, assume without loss of generality that $\Graph$ is a 
  pumpkin chain, and apply 
  Theorem~\ref{thm:transferring_vol}(\ref{item:unfolding_parallel}) repeatedly until 
  every pumpkin is turned into a path. To prove \eqref{eq:band-levy}, note that after 
  applying Lemma~\ref{lem:pumpkinchain} every constituent pumpkin has at least 
  two edges. Hence apply 
  Theorem~\ref{thm:transferring_vol}(\ref{item:unfolding_parallel}) to reduce each 
  to a pumpkin on two edges. The resulting graph is a $[2,2,\ldots,2]$-pumpkin 
  chain. Cutting through all the vertices of degree four lowers $\lambda_2^N$ and 
  produces $\mathcal{C}$.
\end{remark}

\begin{proof}[Proof of Theorem~\ref{thm:doubly-connected}]
Fix any eigenfunction $\psi$ of $\Graph$ associated with $\lambda_2^N (\Graph)$. 
We may assume without loss of generality that $\psi$ does not vanish identically on 
any edge of $\Graph$. Indeed, if it did, say on the edge $e \sim v_1 v_2$, we could 
form a new graph $\NewGraph$ by deleting $e$ and gluing $v_1$ and $v_2$; then 
$\lambda_2^N(\NewGraph) = \lambda_2^N(\Graph)$ by 
Corollary~\ref{cor:increasing_vol}(\ref{item:shrinking_redundant}). Moreover, 
$|\NewGraph|\leq L$, and the bounds in the theorem are decreasing functions of $L$.

\textbf{Step 1.} We start by creating a pumpkin chain $\mathcal{P}$ out of $\Graph$ 
in accordance with Lemma~\ref{lem:pumpkinchain}: then $\mathcal{P}$ still has 
length $L$, $\lambda_2^N (\mathcal{P}) = \lambda_2^N (\Graph)$, and up to the usual 
identification $\psi$ is still an eigenfunction on $\mathcal{P}$, monotonically increasing 
on each pumpkin. Now gluing together vertices can only shorten the paths used in 
Definition~\ref{def:doubly-connected-part}, while new paths could be created; hence 
the size of the doubly connected part can only increase, i.e., $|\mathscr D_{\mathcal P}| 
\geq V$.

\textbf{Step 2.} We now apply 
Theorem~\ref{thm:transferring_vol}(\ref{item:symm_parallel}) with $m=2$ to 
each constituent pumpkin of $\mathcal{P}$, using $\psi$ as the eigenfunction with the 
necessary properties, to obtain a locally equilateral pumpkin chain 
$\widetilde{\mathcal{P}}$ with $\lambda_2^N (\widetilde{\mathcal{P}}) \leq 
\lambda_2^N (\mathcal{P})$, each of whose pumpkins has either one or two edges, 
and the sum of the lengths of the two-pumpkins is still $|\mathscr D_{\mathcal P}|$.

\textbf{Step 3.} Let $\tilde\psi$ be an eigenfunction corresponding to 
$\lambda_2^N(\widetilde{\mathcal{P}})$: then, as established in 
Lemma~\ref{lem:simple-chain}, $\tilde\psi$ is longitudinal (in particular 
invariant with respect to permutations of the edges of each two-pumpkin), 
monotonic between the two terminal vertices of $\widetilde{\mathcal{P}}$, and 
does not vanish on any edge. In particular, 
the two sets $\widetilde{\mathcal{P}}^+ := \{x \in \widetilde{\mathcal{P}}: \tilde\psi 
(x) \geq 0 \}$ and $\widetilde{\mathcal{P}}^- := \{x \in \widetilde{\mathcal{P}}: 
\tilde \psi (x) \leq 0 \}$ are connected and, up to identifying the (at most two) points 
where $\tilde\psi = 0$, creating the vertex $ v_0$,  are themselves locally 
equilateral pumpkin chains as in Figure~\ref{fig:generalised-dumbbell}.
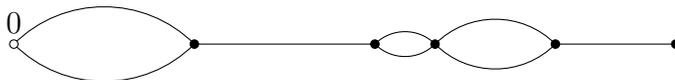
\begin{figure}[H]
\begin{tikzpicture}[scale=0.8]
\coordinate (b) at (2,0);
\coordinate (c) at (5,0);
\coordinate (d) at (8,0);
\coordinate (e) at (9,0);
\coordinate (f) at (11,0);
\coordinate (g) at (13,0);
\draw[fill] (5,0) circle (2pt);
\draw[fill] (8,0) circle (2pt);
\draw[fill] (9,0) circle (2pt);
\draw[fill] (11,0) circle (2pt);
\draw[fill] (13,0) circle (2pt);
\draw[bend left=45] (b) edge (c);
\draw[bend right=45] (b) edge (c);
\draw[bend left=45] (d) edge (e);
\draw[bend right=45] (d) edge (e);
\draw[bend left=45] (e) edge (f);
\draw[bend right=45] (e) edge (f);
\draw (f) -- (g);
\draw (c) -- (d);
\node at (b) [anchor=south] {$0$};
\draw[black,fill=white] (2,0) circle (2pt);
\end{tikzpicture}
\caption{A depiction of the set $\widetilde{\mathcal{P}}^+$; the point $0$ refers to 
the set $\{\tilde\psi=0\}$, which is taken as a Dirichlet condition at the vertex.}
\label{fig:generalised-dumbbell}
\end{figure}
We note that $\tilde\psi$ continues to be an eigenfunction on 
$\widetilde{\mathcal{P}}^\pm$ and, since it does not change sign, we have 
$\lambda_2^N (\widetilde{\mathcal{P}}) = \lambda_1^D 
(\widetilde{\mathcal{P}}^\pm)$, the latter graphs being equipped with a Dirichlet 
condition at the vertex corresponding to $\{\tilde\psi=0\}$. We suppose that the sum 
of the lengths of the two-pumpkins of $\widetilde{\mathcal{P}}^\pm$ is $\ell_\pm$; 
then $\ell_-+\ell_+  = | \mathscr D_{\mathcal P} |$.

We may thus apply Proposition~\ref{prop:shifting-pumpkins} to each of 
$\widetilde{\mathcal{P}}^\pm$ separately to shift the two-pumpkins away from the 
vertex $v_0$: suppose that $\widetilde{\mathcal{P}}^+$ consists of pumpkins 
$\mathcal{P}_1,\ldots, \mathcal{P}_m$, each having either one or two edges, and denote 
by $\widehat{\mathcal{P}}^+$ the pumpkin chain with a Dirichlet condition in which 
all the one-pumpkins have been lined up after the vertex $ v_0 $ and the two-pumpkins 
are in a row after them. Cutting through the vertices of degree four 
(cf.~Figure~\ref{fig:a-pumpkin-in-the-sun}), which each separate two of the 
neighbouring two-pumpkins of $\widehat{\mathcal{P}}^+$, we are left with a 
tadpole graph (which we will still denote by $\widehat{\mathcal{P}}^+$) having a 
loop of length $\ell_+$ and a pendant edge of length $|\widetilde{\mathcal{P}}^+| 
- \ell_+$, such that $\lambda_1^D (\widetilde{\mathcal{P}}^+) \geq \lambda_1^D 
(\widehat{\mathcal{P}}^+)$.

Similarly, we obtain a tadpole graph $\widehat{\mathcal{P}}^-$ having the same 
total length as $\widetilde{\mathcal{P}}^-$, a loop of length $\ell_-$, a pendant edge 
with a Dirichlet vertex, and an eigenvalue $\lambda_1 (\widehat{\mathcal{P}}^-) \leq 
\lambda_1 (\widetilde{\mathcal{P}}^-)$.

Gluing $\widehat{\mathcal{P}}^\pm$ together at their Dirichlet vertices, we obtain a 
dumbbell $\db{\ell_-}{\ell_+}$ with the same total length as $\Graph$, and such that $
\lambda_2^N (\db{\ell_-}{\ell_+}) \leq \max \lambda_1(\widehat{\mathcal{P}}^\pm)$ 
(since the eigenfunctions on the latter two graphs may be glued to
  create a valid non-trivial eigenfunction on $\db{\ell_-}{\ell_+}$.

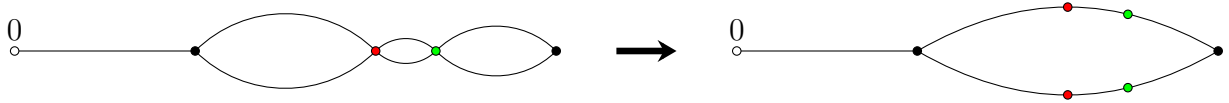
\begin{figure}[H]
\begin{tikzpicture}[scale=0.8]
\coordinate (a) at (-1,0);
\coordinate (b) at (2,0);
\coordinate (c) at (5,0);
\coordinate (d) at (6,0);
\coordinate (e) at (8,0);
\draw[fill] (2,0) circle (2pt);
\draw[fill] (8,0) circle (2pt);
\draw (a) -- (b);
\draw[bend left=45] (b) edge (c);
\draw[bend right=45] (b) edge (c);
\draw[bend left=45] (c) edge (d);
\draw[bend right=45] (c) edge (d);
\draw[bend left=45] (d) edge (e);
\draw[bend right=45] (d) edge (e);
\node at (a) [anchor=south] {$0$};
\draw[black,fill=white] (-1,0) circle (2pt);
\draw[fill=red] (5,0) circle (2pt);
\draw[fill=green] (6,0) circle (2pt);
\draw[-{Stealth[scale=0.5,angle'=60]},line width=2.5pt] (9,0) -- (10,0);
\coordinate (f) at (11,0);
\coordinate (g) at (14,0);
\coordinate (h) at (19,0);
\draw[fill] (14,0) circle (2pt);
\draw[fill] (19,0) circle (2pt);
\draw (f) -- (g);
\draw[bend left=30] (g) edge (h);
\draw[bend right=30] (g) edge (h);
\node at (f) [anchor=south] {$0$};
\draw[black,fill=white] (11,0) circle (2pt);
\draw[fill=red] (16.5,0.73) circle (2pt);
\draw[fill=red] (16.5,-0.73) circle (2pt);
\draw[fill=green] (17.5,0.61) circle (2pt);
\draw[fill=green] (17.5,-0.61) circle (2pt);
\end{tikzpicture}
\caption{The graph created from $\widetilde{\mathcal{P}}^+$ by shifting all the 
two-pumpkins away from the Dirichlet vertex denoted by $0$ (left); the graph 
created by cutting through all the vertices of degree four, to produce the tadpole 
$\widehat{\mathcal{P}}^+$ (right).}
\label{fig:a-pumpkin-in-the-sun}
\end{figure}

\textbf{Step 4.} We have thus found a dumbbell $\mathcal{D} = \db{\ell_-}{\ell_+}$ 
with $\ell_-+\ell_+ = | \mathscr D_{\mathcal P} | \geq  V$ such that $\lambda_2^N 
(\mathcal{D}) \leq \lambda_2^N (\widetilde{\mathcal{P}})$. By 
Proposition~\ref{prop:pumpkin-dumbbell}(\ref{item:pumpkin-dumbbell-balance}), we 
finally have $\lambda_2^N (\db{\frac{V}{2}}{\frac{V}{2}}) \leq \lambda_2^N (\mathcal{D}) \leq 
\lambda_2^N (\widetilde{\mathcal{P}}) \leq \lambda_2^N (\mathcal{P}) = 
\lambda_2^N (\Graph)$, proving Theorem~\ref{thm:doubly-connected}.
\end{proof}

We shall now give a variant of Theorem~\ref{thm:doubly-connected} which is stronger 
if the doubly connected part $\dcp$ is connected, or even if one of its connected 
components is sufficiently large compared with the rest: here, our object of comparison 
will be a tadpole graph rather than a dumbbell. We recall our notation from 
\eqref{eq:notation-toad}: for fixed $L$, $\tp{V}$ is the tadpole with total length $L$ 
and loop length $V \in [0,L]$.

\begin{theorem}
\label{thm:doubly-connected-component}
Suppose the compact and connected graph $\Graph$ has total length $L$, and its 
doubly connected part has a connected component of length $V \in [0,L]$. Then
\begin{displaymath}
	\lambda_2^N (\Graph) \geq \lambda_2^N (\tp{V}).
\end{displaymath}
\end{theorem}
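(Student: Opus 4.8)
The plan is to follow the proof of Theorem~\ref{thm:doubly-connected} almost verbatim, and to diverge only at the very end, where a dumbbell with total loop length at least $V$ will be compared with the tadpole $\tp{V}$ rather than being ``balanced''. Fix an eigenfunction $\psi$ for $\lambda_2^N(\Graph)$. As in the proof of Theorem~\ref{thm:doubly-connected} we may assume that $\psi$ vanishes identically on no edge: if it did, say on $e\sim v_1v_2$, we delete $e$ and glue $v_1,v_2$, which leaves $\lambda_2^N$ unchanged by Corollary~\ref{cor:increasing_vol}(\ref{item:shrinking_redundant}), can only enlarge the length-$V$ component of $\dcp$ (so $V$ still does not exceed the new total length), and decreases $L$; and for fixed loop length $V$ the comparison quantity $\lambda_2^N(\tp{V})$ only increases when $L$ decreases, by Proposition~\ref{prop:pumpkin-on-a-stick}(\ref{item:pumpkin-stick-size}) (shortening the tail), so the bound for the reduced graph is at least as strong. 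Running Lemma~\ref{lem:pumpkinchain} then produces a pumpkin chain $\mathcal{P}$ with $|\mathcal P|=L$, $\lambda_2^N(\mathcal P)=\lambda_2^N(\Graph)$, and $\psi$ monotone along the chain; since $\psi$ vanishes on no edge, no edge is shrunk, so $\dcp$ maps into $\dcpgraph{\mathcal P}$ and its length-$V$ connected component, being connected, lands inside a single connected component of $\dcpgraph{\mathcal P}$, which therefore has length at least $V$.

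Next, exactly as in Steps~2--3 of the proof of Theorem~\ref{thm:doubly-connected}, apply Theorem~\ref{thm:transferring_vol}(\ref{item:symm_parallel}) with $m=2$ to each constituent pumpkin to get a locally equilateral pumpkin chain $\widetilde{\mathcal P}$ with $\lambda_2^N(\widetilde{\mathcal P})\le\lambda_2^N(\mathcal P)$, each of whose pumpkins has one or two edges; the component of $\dcpgraph{\mathcal P}$ of length $\ge V$ is now a maximal run $\mathcal Q$ of two-edge pumpkins of total length $\ge V$. Let $\tilde\psi$ be a $\lambda_2^N(\widetilde{\mathcal P})$-eigenfunction; by Lemma~\ref{lem:simple-chain} it is longitudinal, monotone along the chain, and vanishes on no edge, so its zero set, collapsed to a vertex $v_0$, splits $\widetilde{\mathcal P}$ into two locally equilateral pumpkin chains $\widetilde{\mathcal P}^{\pm}$, each carrying a Dirichlet condition at $v_0$ and natural conditions elsewhere, each with $\lambda_1^D(\widetilde{\mathcal P}^{\pm})=\lambda_2^N(\widetilde{\mathcal P})$. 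Writing $\ell_{\pm}$ for the total length of the two-edge pumpkins inside $\widetilde{\mathcal P}^{\pm}$, the run $\mathcal Q$ is distributed across the two sides (at most one of its pumpkins split by $v_0$), so $\ell_++\ell_-\ge|\mathcal Q|\ge V$.

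Now, on each of $\widetilde{\mathcal P}^{\pm}$ separately, repeatedly apply Proposition~\ref{prop:shifting-pumpkins} (whenever there is anything to shift) to move all two-edge pumpkins away from $v_0$, which strictly lowers $\lambda_1^D$ at each swap; then cut through the resulting degree-four vertices between consecutive two-edge pumpkins, an operation covered by Theorem~\ref{thm:changing_vc}(\ref{item:gluing_vertices}) in reverse and hence non-increasing for $\lambda_1^D$, which consolidates them into a single loop. This yields a (possibly degenerate) tadpole $\widehat{\mathcal P}^{\pm}$ with loop of length $\ell_{\pm}$, a pendant tail terminating at the Dirichlet vertex, and $\lambda_1^D(\widehat{\mathcal P}^{\pm})\le\lambda_2^N(\widetilde{\mathcal P})$. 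Gluing $\widehat{\mathcal P}^{+}$ and $\widehat{\mathcal P}^{-}$ at their Dirichlet vertices gives a graph of total length $L$ with purely natural conditions which is the dumbbell $\db{\ell_+}{\ell_-}$; scaling the two (one-signed) first Dirichlet eigenfunctions so that the glued function has zero mean produces a valid test function for $\lambda_2^N(\db{\ell_+}{\ell_-})$, whose Rayleigh quotient is at most $\max\bigl(\lambda_1^D(\widehat{\mathcal P}^{+}),\lambda_1^D(\widehat{\mathcal P}^{-})\bigr)$, so $\lambda_2^N(\db{\ell_+}{\ell_-})\le\lambda_2^N(\widetilde{\mathcal P})$.

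It remains to compare $\db{\ell_+}{\ell_-}$, with $\ell_++\ell_-\ge V$, with $\tp{V}$. Since $\lambda_2^N(\db{\,\cdot\,}{\,\cdot\,})$ is increasing in each loop length by Proposition~\ref{prop:pumpkin-dumbbell}(\ref{item:pumpkin-dumbbell-size}), we may shrink the loops to $\ell_+'\le\ell_+$ and $\ell_-'\le\ell_-$ with $\ell_+'+\ell_-'=V$, giving $\lambda_2^N(\db{\ell_+}{\ell_-})\ge\lambda_2^N(\db{\ell_+'}{\ell_-'})$; holding the loop sum fixed at $V$, Proposition~\ref{prop:pumpkin-dumbbell}(\ref{item:pumpkin-dumbbell-balance}) (with continuity at the endpoint) gives $\lambda_2^N(\db{\ell_+'}{\ell_-'})\ge\lambda_2^N(\db{0}{V})=\lambda_2^N(\tp{V})$. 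Chaining, $\lambda_2^N(\Graph)=\lambda_2^N(\mathcal P)\ge\lambda_2^N(\widetilde{\mathcal P})\ge\lambda_2^N(\db{\ell_+}{\ell_-})\ge\lambda_2^N(\tp{V})$. The one genuinely new point beyond the proof of Theorem~\ref{thm:doubly-connected} — and the step I expect to require the most care — is bookkeeping: checking that the length-$V$ connected component of $\dcp$ is inherited, with undiminished length, through the pumpkin-chain reduction and the symmetrisation (which is exactly why one must first discard edges on which $\psi$ vanishes), and is then consolidated by the shifting-and-cutting step into loops of total length at least $V$ spread over the two sides; once this is in place, the final comparison of the dumbbell with the tadpole is an immediate consequence of the monotonicity and balancing properties of $\lambda_2^N$ on pumpkin dumbbells recorded in Proposition~\ref{prop:pumpkin-dumbbell}.
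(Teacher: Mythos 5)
Your reduction to the split tadpoles $\widehat{\mathcal P}^{\pm}$ and the dumbbell $\db{\ell_+}{\ell_-}$ faithfully reproduces the proof of Theorem~\ref{thm:doubly-connected}, but the final comparison step is where the argument breaks: the inequality $\lambda_2^N(\db{\ell_+'}{\ell_-'}) \geq \lambda_2^N(\tp{V})$ for $\ell_+'+\ell_-'=V$ is false in general — it goes the wrong way. Among dumbbells with fixed total length and fixed loop sum, the \emph{balanced} dumbbell is the unique minimiser of $\lambda_2^N$ and the tadpole is the maximiser; this is what Proposition~\ref{prop:pumpkin-dumbbell}(\ref{item:pumpkin-dumbbell-min}) asserts, what the Hadamard computation in the proof of Proposition~\ref{prop:pumpkin-dumbbell}(\ref{item:pumpkin-dumbbell-balance}) actually shows (lengthening the shorter loop and shortening the longer one \emph{lowers} $\lambda_2^N$), how that statement is used in Step~4 of the proof of Theorem~\ref{thm:doubly-connected} ($\lambda_2^N(\db{\frac{V}{2}}{\frac{V}{2}}) \leq \lambda_2^N(\db{\ell_-}{\ell_+})$), and why, as remarked right after the theorem you are proving, $\lambda_2^N(\tp{V}) > \lambda_2^N(\db{\frac{V}{2}}{\frac{V}{2}})$ for $V>0$. (The word ``increasing'' in the statement of Proposition~\ref{prop:pumpkin-dumbbell}(\ref{item:pumpkin-dumbbell-balance}) is inconsistent with part~(\ref{item:pumpkin-dumbbell-min}) and with its own proof, so your literal citation is understandable, but the direction you need is the mathematically false one.) Concretely, if the zero set of $\tilde\psi$ happens to split the doubly connected mass evenly, your chain ends at $\lambda_2^N(\db{\frac{V}{2}}{\frac{V}{2}})$, which is strictly \emph{below} $\lambda_2^N(\tp{V})$; so no amount of care in the bookkeeping can close this. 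The deeper point is that by distributing the loops over the two bells you discard exactly the hypothesis that distinguishes this theorem from Theorem~\ref{thm:doubly-connected}, namely that the length $V$ sits in a \emph{single} connected component of $\dcp$; once that information is gone, only the dumbbell bound survives.

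The paper's proof avoids this by never splitting the component: after the pumpkin-chain reduction and local symmetrisation to $2$-pumpkins, the length-$\geq V$ component is a run of consecutive $2$-pumpkins, which is consolidated into a \emph{single} $2$-pumpkin of length $\geq V$ (unfolding, via Theorem~\ref{thm:transferring_vol}(\ref{item:unfolding_parallel}), the $2$-pumpkins outside the component into single edges, and cutting the degree-four vertices inside the run), yielding a pumpkin-on-a-stick $\pstick{\ell_1}{2}{\ell_2}$ with pumpkin length $\geq V$; then Proposition~\ref{prop:pumpkin-on-a-stick}(\ref{item:pumpkin-stick-min}) pushes the pumpkin to the end (giving a tadpole with loop $\geq V$) and Proposition~\ref{prop:pumpkin-on-a-stick}(\ref{item:pumpkin-stick-size}) shrinks the loop to length exactly $V$, giving $\lambda_2^N(\Graph) \geq \lambda_2^N(\tp{V})$. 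A separate, minor slip: deleting an edge on which $\psi\equiv 0$ and gluing its endpoints can \emph{shrink} the length-$V$ component if that edge lies inside it (e.g.\ an edge of a cycle), not only enlarge it; this is harmless because the component shrinks by at most the deleted length while $L$ shrinks by exactly that length, and lengthening both loop and tail of a tadpole decreases $\lambda_2^N$, so the bound for the reduced graph still implies the original one — but the justification you give does not cover this case.
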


Note that, for given $V>0$, $\lambda_2^N (\tp{V}) > \lambda_2^N (\db{\frac{V}{2}}{\frac{V}{2}})$, 
as follows from 
Proposition~\ref{prop:pumpkin-dumbbell}(\ref{item:pumpkin-dumbbell-thickness}). 
Thus if for example $\dcp$ is connected, or even if $\dcp$ has a connected component 
whose total length is sufficiently close to $V$, then 
Theorem~\ref{thm:doubly-connected-component} provides a better estimate than 
Theorem~\ref{thm:doubly-connected}.

\begin{proof}[Sketch of proof of Theorem~\ref{thm:doubly-connected-component}]
The proof is a simple modification of the proof of Theorem~\ref{thm:doubly-connected}, 
so we do not go into much detail. Assuming without loss of generality that the eigenfunction 
does not vanish identically on any edge, the largest doubly connected component of 
the pumpkin chain $\mathcal P$ formed from $\Graph$ is at least as
large as the largest doubly connected component in $\Graph$. Locally symmetrising, 
unfolding as necessary to create a pumpkin-on-a-stick with $m=2$ and finally invoking 
Proposition~\ref{prop:pumpkin-on-a-stick}(\ref{item:pumpkin-stick-min}), we get that 
$\lambda_2^N (\Graph)$ can estimated from below by the second eigenvalue of the tadpole 
whose loop is equal to the size of the largest doubly connected component in $ \Graph$.
\end{proof}

Finally, to illustrate the strength of the above theorems, we give a comparison 
with what is known for \emph{discrete (combinatorial) graph Laplacians}. 
We recall one of the principal results in this direction, which is 
in terms of the (discrete) \emph{girth} $s$ of a combinatorial graph $\mG$, 
defined as the shortest cycle length in the graph.

\begin{proposition}
\label{prop:discrete-tadpole}
  Among all connected combinatorial graphs $\mG$ on $n$ vertices with girth $s\geq 3$, the 
  algebraic connectivity (smallest non-trivial eigenvalue of the combinatorial 
  Laplacian) is minimised by the tadpole graph consisting of a cycle of length $s$ 
  attached at one vertex to a path of length $n-s$.
\end{proposition}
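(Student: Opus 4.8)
The plan is to reach the tadpole in two stages: first reduce from arbitrary connected graphs of girth $s$ to \emph{unicyclic} graphs whose unique cycle has length exactly $s$, using edge-deletion monotonicity of the algebraic connectivity; and then, among such unicyclic graphs, reduce to the tadpole by grafting moves that are, in spirit, the combinatorial counterparts of the volume-transferring principles of Section~\ref{sec:op_transfer_volume}, carried out with the tools appropriate to the combinatorial Laplacian.

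For the first stage, recall that the combinatorial Laplacian decomposes as $L_{\mG} = \sum_{e\in E} L_e$ with each edge term $L_e = (\mathbf{e}_u - \mathbf{e}_v)(\mathbf{e}_u - \mathbf{e}_v)^{\top}$ positive semidefinite, so the Courant--Fischer characterisation $a(\mG) = \min\{\, x^{\top} L_{\mG} x : \|x\| = 1,\ x \perp \mathbf{1}\,\}$ shows that $a$ cannot increase when edges are removed. Given a connected $\mG$ of girth $s$, I would fix a shortest cycle $C$, delete one of its edges $e$, extend the path $C-e$ to a spanning tree $T$ of $\mG-e$ (which remains connected), and set $\mG' := T+e$. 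Then $\mG'$ is a spanning unicyclic subgraph of $\mG$ whose unique cycle is exactly $C$ (the $T$-path between the endpoints of $e$ is $C-e$, so the fundamental cycle of $e$ is $C$), hence $\mG'$ still has girth $s$ and $a(\mG')\le a(\mG)$. It therefore suffices to minimise $a$ over unicyclic graphs on $n$ vertices whose unique cycle has length $s$; these all have exactly $n$ edges, so edge counting no longer helps and one passes to the second stage.

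For the second stage, write such a graph as the cycle $C_s = v_1\cdots v_s$ together with rooted trees $T_1,\dots,T_s$ hung at $v_1,\dots,v_s$, where $\sum_i(|T_i|-1) = n-s$. The claim is that two grafting moves strictly decrease $a$ unless they are trivial: (i) replacing a hung tree by a pendant path of the same size (\emph{straightening}), and (ii) moving a pendant path from one cycle vertex to another so as to concentrate all of them at a single cycle vertex. After (i) and (ii) the graph is exactly the tadpole $C_s$ with a pendant path of $n-s$ edges, so the resulting chain of inequalities proves the proposition (and, via the strictness, that the tadpole is the unique minimiser, since any other admissible graph admits a nontrivial move). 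Both moves are instances of the classical grafting/Perron-branch lemmas for the algebraic connectivity: one picks a Fiedler vector $x$ of the current graph, uses that $x$ is monotone along pendant paths and that among the cycle vertices carrying branches there is one, $v_{i^\ast}$, at which $|x|$ is largest, and then compares the Rayleigh quotient of $x$, suitably shifted on the moved branch, to conclude that pushing mass from a shorter branch to a longer one at a branching vertex, or from any other branch-carrying cycle vertex towards $v_{i^\ast}$, cannot raise the quotient.

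The main obstacle is precisely this grafting step, especially (ii) and the strictness. Unlike the quantum-graph setting of Section~\ref{sec:tools}, the second eigenvector of the combinatorial Laplacian does not enjoy the clean nodal and monotonicity structure exploited there, so the perturbation estimates must be organised through Fiedler's theory of the characteristic vertex/edge and the associated Perron components and bottleneck matrices (in the manner of Kirkland--Neumann--Shader and of Fallat--Kirkland--Pati): the delicate point is to control the location of the characteristic vertex, which should remain on the cycle, so that the branch being moved is genuinely a Perron branch and the inequality is strict. The degenerate cases $n=s$, where $C_s$ is the only admissible graph, and small $n-s$ are trivial and can be disposed of directly; alternatively one may simply cite the unicyclic-graph literature, where this extremal characterisation of the algebraic connectivity is already established.
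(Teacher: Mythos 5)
You should first be aware that the paper does not prove Proposition~\ref{prop:discrete-tadpole} at all: it is quoted as a known result from the combinatorial literature, namely the main theorem of \cite{Guo_dm08}, which settled a conjecture of \cite{FalKir_ejla98}; in the paper it serves only as a point of comparison for the metric-graph results (Theorem~\ref{thm:doubly-connected-component} and Corollary~\ref{cor:circumference}). So the relevant comparison is between your sketch and that citation. Your first stage is correct and cleanly argued: $L_{\mG}$ is a sum of positive semidefinite edge terms, so deleting edges cannot increase the algebraic connectivity, and retaining a fixed shortest cycle $C$ plus a spanning tree of $\mG-e$ containing $C-e$ produces a connected spanning unicyclic subgraph whose unique cycle is $C$, hence of girth exactly $s$. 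This reduces the problem to unicyclic graphs with cycle length $s$.

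The gap is in the second stage, and it is not a technicality: the two grafting moves, with the strict decrease and the control of where the characteristic vertex (or characteristic edge) of the Fiedler vector sits, \emph{are} the Fallat--Kirkland conjecture for unicyclic graphs of fixed girth, i.e.\ precisely the content of Guo's paper. The ``classical grafting/Perron-branch lemmas'' you invoke are stated for branches at a single vertex of a tree or for moves toward the characteristic vertex; they do not directly deliver move (ii), because the characteristic vertex need not lie on the cycle (it can sit inside a hung tree), in which case the branch you want to relocate is not a Perron branch and the Rayleigh-quotient comparison with a shifted Fiedler vector can fail to be monotone. You flag exactly this as ``the main obstacle'' and then leave it unresolved, falling back on ``cite the unicyclic-graph literature'' --- which is what the paper itself does. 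So as a self-contained proof the proposal is incomplete at its essential step (and your uniqueness claim via strictness is stronger than what the proposition asserts and would need the same missing analysis); as a citation-based argument it coincides with the paper's treatment, with your edge-deletion reduction a correct but, given the form of Guo's theorem for connected graphs of fixed girth, inessential preliminary.
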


This is the principal result of \cite{Guo_dm08}, following a conjecture of \cite{FalKir_ejla98}.
By way of comparison, Theorem~\ref{thm:doubly-connected-component} implies 
a corresponding statement in terms of the \emph{circumference} $\cy = \cy 
(\Graph)$ of the metric graph $\Graph$, which we define to be the 
\emph{maximum cycle length within $\Graph$}. (By \emph{cycle} of a metric 
graph, we mean any closed path within $\Graph$ in which no edge appears twice, 
although vertices may be crossed multiple times. The assumption that $\Graph$ 
has a finite number of edges of finite length guarantees that this maximum is 
well defined.) If $\Graph$ is a tree, we define $\cy(\Graph):=0$. We will also 
use $s=s(\Graph)$ to denote the \emph{metric girth}, i.e., shortest cycle length 
in $\Graph$.

\begin{corollary}
\label{cor:circumference}
Suppose the compact and connected graph $\Graph$ has total length 
$L$, its circumference is $\cy \in [0,L]$ and its girth is 
$s \in [0,\cy]$. Then
\begin{displaymath}
	\lambda_2^N (\Graph) \geq \lambda_2^N (\tp{\cy}) \geq \lambda_2^N (\tp{s}).
\end{displaymath}
\end{corollary}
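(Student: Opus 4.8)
The plan is to deduce both inequalities from Theorem~\ref{thm:doubly-connected-component}, together with the fact established in Section~\ref{sec:pumpkins} that the spectral gap of a tadpole of fixed total length is strictly increasing in the length of its loop.

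The only step requiring a little care is purely topological, and is where I expect whatever modest bookkeeping is needed: I would first verify that a longest cycle of $\Graph$ lies inside a single connected component of the doubly connected part $\dcp$. Recall from the discussion after Definition~\ref{def:doubly-connected-part} that $\dcp$ is obtained from $\Graph$ by deleting every bridge. If an edge $e$ lies on a cycle $Z$ (a closed path in which no edge appears twice, vertices possibly crossed several times), then the two endpoints of $e$ remain joined in $\Graph\setminus e$ through the rest of $Z$, so $e$ is not a bridge and hence $e\subset\dcp$. Since $Z$ is connected as a subset of $\Graph$, the whole of $Z$ lies in a single connected component $C$ of $\dcp$; choosing $Z$ of maximal length $\cy$ gives $|C|\geq\cy$.

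The remainder is a short assembly. Applying Theorem~\ref{thm:doubly-connected-component} to the connected component $C$, whose length is $V:=|C|\in[\cy,L]$, yields $\lambda_2^N(\Graph)\geq\lambda_2^N(\tp{V})$. Since $V\geq\cy\geq s$, the strict monotonicity of $\ell\mapsto\lambda_2^N(\tp{\ell})$ furnished by Proposition~\ref{prop:pumpkin-on-a-stick}(\ref{item:pumpkin-stick-size}) --- recalling $\tp{\ell}=\pstick{L-\ell}{2}{0}$ --- gives
\[
  \lambda_2^N(\tp{V})\ \geq\ \lambda_2^N(\tp{\cy})\ \geq\ \lambda_2^N(\tp{s}),
\]
and chaining these with the previous inequality proves the corollary. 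The tree case is automatic: there $\cy=s=0$, $\dcp=\emptyset$, $\tp{0}$ is the path of length $L$, and the statement reduces to Nicaise's inequality \eqref{eq:nicaise}.
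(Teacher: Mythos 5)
Your proof is correct and follows essentially the same route as the paper: locate the longest cycle inside a single connected component of $\dcp$, apply Theorem~\ref{thm:doubly-connected-component} to that component, and then use the monotonicity of $V \mapsto \lambda_2^N(\tp{V})$ from Proposition~\ref{prop:pumpkin-on-a-stick}(\ref{item:pumpkin-stick-size}) to pass to $\cy$ and then to $s$. The only difference is that you spell out the bridge argument showing the cycle lies in $\dcp$, which the paper treats as immediate from the definition.
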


Here $\tp{V}$ is, as before, the tadpole graph of total length $L$ and loop 
length $V\in [0,L]$. Actually, the discrete equivalent of Corollary~\ref{cor:circumference} 
has just been proved; see \cite{XuLiSh_tcs18}.

\begin{proof}[Proof of Corollary~\ref{cor:circumference}]
Let $\mathcal{C} \subset \Graph$ be a cycle of length $\cy$. Then it is immediate 
from the definition that $\mathcal{C}$ is contained in a connected component 
of $\dcp$, meaning that this component has length at least as large as $\cy$.
Now apply Theorem~\ref{thm:doubly-connected-component} and, if necessary, use 
that the mapping $V \mapsto \lambda_2^N (\tp{V})$ is an increasing function of $V$ 
for fixed $L$ by Proposition~\ref{prop:pumpkin-on-a-stick}(\ref{item:pumpkin-stick-size}).
\end{proof}

%%%%%%%%%%%%%%%%%%%%%%%%%%%%%%%%%%%%%%%%%%%%%%%%%%%%%%%%%%%%%%%%%%%%%%%%%%%%%%
\def\cprime{$'$}

\end{document}